\DeclarePairedDelimiterX{\norm}[1]{\lVert}{\rVert}{#1}
\DeclarePairedDelimiterX{\abs}[1]{\lvert}{\rvert}{#1}
\newcommand\restr[2]{{% we make the whole thing an ordinary symbol
  \left.\kern-\nulldelimiterspace % automatically resize the bar with \right
  #1 % the function
  %\vphantom{\big|} % pretend it's a little taller at normal size
  \right|_{#2} % this is the delimiter
  }}
  \newcommand{\RomanNumeralCaps}[1]
    {\MakeUppercase{\romannumeral #1}}
\newcommand{\bbP}{\mathbb{P}}
\newcommand{\bbR}{\mathbb{R}}
\newcommand{\bbC}{\mathbb{C}}
\newcommand{\bbE}{\mathbb{E}}
\newcommand{\bbN}{\mathbb{N}}
\newcommand{\calH}{\mathcal{H}}
\newcommand{\calD}{\mathcal{D}}
\newcommand{\calB}{\mathcal{B}}
\newcommand{\calX}{\mathcal{X}}
\newcommand{\calY}{\mathcal{Y}}
\newcommand{\calF}{\mathcal{F}}
\newcommand{\calP}{\mathcal{P}}
\newcommand{\calN}{\mathcal{N}}
\newcommand{\calR}{\mathcal{R}}
\newcommand{\calI}{\mathcal{I}}
\newcommand{\calGP}{\mathcal{GP}}
\newcommand{\MMD}{\text{MMD}}
\newcommand{\eMMD}{\emph{MMD}}
\newcommand{\Tr}{\text{Tr}}
\newtheorem{theorem}{Theorem}
\newtheorem{lemma}{Lemma}
\newtheorem{definition}{Definition}
\newtheorem{corollary}{Corollary}
\newtheorem{proposition}{Proposition}
\newtheorem{remark}{Remark}
\title{A Kernel Two-Sample Test for Functional Data}
\author[1]{George Wynne}
\author[1,2]{Andrew B. Duncan}
\affil[1]{Imperial College London, Department of Mathematics}
\affil[2]{The Alan Turing Institute}
\date{}
\begin{document}

\makeatletter\let\Title\@title\makeatother
\maketitle

\abstract{We propose a nonparametric two-sample test procedure based on Maximum Mean Discrepancy (MMD) for testing the hypothesis that two samples of functions have the same underlying distribution, using kernels defined on function spaces. This construction is motivated by a scaling analysis of the efficiency of MMD-based tests for datasets of increasing dimension. Theoretical properties of kernels on function spaces and their associated MMD  are established and employed to ascertain the efficacy of the newly proposed test, as well as to assess the effects of using functional reconstructions based on discretised function samples.  The theoretical results are demonstrated over a range of synthetic and real world datasets.}

%%%%%%%%%%%%%%%%%%%%%%%%%%%%%%%%%%%%%%%%%%%%%%%%%%%%%%%%%%%%%%%%%%%%%%%%%%%%%%%%%%%%%%%%

\section{Introduction} \label{sec:introduction}
Nonparametric two-sample tests for equality of distributions are widely studied in statistics, driven by applications in goodness-of-fit tests, anomaly and change-point detection and clustering. Classical examples of such tests include the Kolmogorov-Smirnov test \citep{kolmogorov1933sulla,smirnov1948table,schmid1958kolmogorov} and Wald-Wolfowitz runs test \citep{wald1940test} with subsequent multivariate extensions \citep{friedman1979multivariate}.

Due to advances in the ability to collect large amounts of real time or spatially distributed data there is a need to develop statistical methods appropriate for functional data, where each data sample is a discretised function. Such data has been studied for decades in the Functional Data Analysis (FDA) literature \citep{horvath2012inference,Hsing2015} particularly in the context of analysing populations of time series, or in statistical shape analysis \citep{mardia1989statistical}.  More recently, due to this modern abundance of functional data, increased study has been made in the machine learning literature for algorithms suited to such data \citep{Berrendero2020,Chevyrev2018,Kadri2016,CARMELI2010,Zhang2012}.

In this paper we consider the case where the two probability distributions being compared are supported over a real, separable Hilbert space, for example $L^2(\mathcal{D})$ with $\mathcal{D} \subset \mathbb{R}^d$ and we have discretised observations of the function samples. If the samples consist of evaluations of the functions over a common mesh of points, then well-known methods for nonparametric two-sample testing for vector data can be used directly. Aside from the practical issue that observations are often made on irregular meshes for each different sample there is also the issue of degrading performance of classical tests as mesh size increases, meaning the observed vectors are high dimensional.  As is typical with nonparametric two-sample tests, the testing power will degenerate rapidly with increasing data dimension.  We therefore seek to better understand how to develop testing methods which are not strongly affected by the mesh resolution, exploiting intrinsic statistical properties of the underlying functional probability distributions.

In the past two decades kernels have seen a surge of use in statistical applications \citep{Muandet2017,Gretton2012,Sutherland2016,Borgwardt2006}. In particular, kernel based two-sample testing \citep{Gretton2012,Gretton2007} has become increasingly popular.  These approaches are based on a distance on the space of probability measures known as \emph{Maximum Mean Discrepancy}. Given two probability distributions $P$ and $Q$, a kernel $k$ is employed to construct a mapping known as the \emph{mean embedding}, of the two distributions into an infinite dimensional Reproducing Kernel Hilbert Space (RKHS).  The MMD between $P$ and $Q$, denoted $\mbox{MMD}_k(P,Q)$ is given by the RKHS norm of the difference between the two embeddings, and defines a pseudo-metric on the space of probability measures. This becomes a metric if $k$ is  \emph{characteristic}, see Section \ref{sec:RKHS}.  By the kernel trick, MMD simplifies to a closed form, up to expectations, with respect to $P$ and $Q$, which can be estimated unbiasedly using Monte Carlo simulations.

A major advantage of kernel two-sample tests is that they can be constructed on any input space which admits a well-defined kernel, including Riemannian manifolds \citep{Pelletier2005}, as well as discrete structures such as graphs \citep{ShaweTaylor2004} and strings \citep{Grtner2003}. The flexibility in the choice of kernel is one of the strengths of MMD-based testing, where \emph{a priori} knowledge of the structure of the underlying distributions can be encoded within the kernel to improve the sensitivity or specificity of the corresponding test.  The particular choice of kernel strongly influences the efficiency of the test, however a general recipe for constructing a good kernel is still an open problem.  On Euclidean spaces, radial basis function (RBF) kernels are often used, i.e. kernels of the form $k(x,y) = \phi(\gamma^{-1} \lVert x - y \rVert_{2})$, where $\lVert \cdot \rVert_{2}$ is the Euclidean norm,  $\phi:\mathbb{R}_{+}\rightarrow\mathbb{R}_{+}$ is a function and $\gamma > 0$ is the \emph{bandwidth}.  Numerous kernels used in practice belong to this class of kernels, including the Gaussian kernel $\phi(r) = e^{-r^2/2}$, the Laplace kernel $\phi(r) = e^{-r}$ and others including  the rational quadratic kernel, the Matern kernel and the multiquadric kernel.  The problem of selecting the bandwidth parameter to maximise test efficiency over a particular input space has been widely studied.  One commonly used strategy is the \emph{median heuristic} where the bandwidth is chosen to be the median of the inter-sample distance.  Despite its popularity, there is only limited understanding of the median heuristic, with some notable exceptions.  In \citet{ramdas2015decreasing,Ramdas2014_mean_shift} the authors investigate the diminishing power of the kernel two-sample test using a Gauss kernel for distributions of white Gaussian random vectors with increasing dimension,  demonstrating that under appropriate alternatives, the power of the test will decay with a rate  dependent on the relative scaling of $\gamma$ with respect to dimension.  Related to kernel based tests are energy distance tests \citep{szekely2003statistics,szekely2004testing}, the relationship was made clear in \citet{Sejdinovic2013}.

There has been relatively little work on understanding the theoretical properties of kernels on function spaces. A Gauss type kernel on  $L^{2}([0,1])$ was briefly considered in \citet[Example 3]{Christmann2010}.  Recently, in \citet{Chevyrev2018} a kernel was defined on the Banach space of paths on $[0,1]$ of unbounded 1-variation, using a novel approach based on path signatures, demonstrating that this is a characteristic kernel over the space of such paths.  The associated MMD has been employed as a loss function to train models generating stochastic processes \citep{Kidger2019}.  Furthermore, in \citet{nelsen2020random} the authors propose an extension of the random Fourier feature kernel of \citet{NIPS2007_3182} to the setting of an infinite dimensional Banach space, with the objective of regression between Banach spaces.  This paper will build on aspects of these works, but with a specific emphasis on two-sample testing for functional data.

Two-sample testing in function spaces has received much attention in FDA and is studied in a variety of contexts.  Broadly speaking there are two classes of methods.  The first approach seeks to initially reduce the problem to a finite dimensional problem through a projection onto a finite orthonormal basis within the function space,  typically using principal components, and then makes use of standard multivariate two-sample tests \citep{benko2009common,lopes2011more}.  The second approach poses a two-sample test directly on function space \citep{aue2018detecting,bucchia2017change,horvath2014testing,Pomann2016,cabana2017permutation}.  Many of these works construct the test on the Hilbert space $L^2(\mathcal{D})$ using the $L^2(\calD)$ norm as the testing statistic.   A priori, it is not obvious why this norm will be well suited to the testing problem, in general.  Investigation into the impact of the choice of distance in distanced based tests for functional data has been studied in the literature \citep{chen2014optimally,chakraborty2019new,Zhu2019} and a distance other than $L^{2}(\calD)$ for the functional data was advocated. This motivates the investigation into kernels which involve distances other than $L^{2}(\calD)$ in their formulation. In many works, the two-sample tests are designed to handle a specific class of discrepancy, such as a shift in mean, such as \citet{Horvth2012} and \citet{zhang2010two}, or a shift in covariance structure \citep{Panaretos2010,ferraty2003curves,FREMDT2012}.

This paper has two main aims. First, to naturally generalise the finite dimensional theory of kernels to real, separable Hilbert spaces to establish kernels that are characteristic, identify their RKHS and establish topological properties of the associated MMD. In particular the proof of characterticness builds upon the spectral methods introduced in \citet{Sriperumbudur2010} and the weak convergence results build upon \citet{Simon-Gabriel2018}. Second, we apply such kernels to the two-sample testing problem and analyse the power of the tests as well as the statistical impact of performing the tests using data reconstructed from discrete functional observations. 

The specific contributions are as follows.

\begin{enumerate}
    \item For Gaussian processes, we identify a scaling of the Gauss kernel bandwidth with mesh-size which results in testing power which is asymptotically independent of mesh-size, under mean-shift alternatives.  In the scaling of vanishing mesh-size we demonstrate that the associated kernel converges to a kernel over functions.  
	\item Motivated by this, we construct a family of kernels defined on real, separable Hilbert spaces and identify sufficient conditions for the kernels to be characteristic, when MMD metrises the weak topology and provide an explicit construction of the reproducing kernel Hilbert space for a Gauss type kernel.
	\item Using these kernels we investigate the statistical effect of using reconstructed functional data in the two-sample test.
	\item We numerically validate our theory and compare the kernel based test with established two-sample tests from the functional data analysis literature. 
\end{enumerate}

The remainder of the paper is as follows. Section \ref{sec:inf_measures} covers preliminaries of modelling random functional data such as the Karhunen-Lo\`eve expansion and Gaussian measures. Section \ref{sec:RKHS} recalls some important properties of kernels and their associated reproducing kernel Hilbert spaces, defines maximum mean discrepancy and the kernel two-sample test. Section \ref{sec:scaling} outlines the scaling of test power that occurs when an increasingly finer observation mesh is used for functional data. Section \ref{sec:kernel} defines a broad class of kernels and offers an integral feature map interpretation as well as outlining when the kernels are characteristic, meaning the two-sample test is valid. Section \ref{sec:inf_dim_MMD} highlights the statistical impact of fitting curves to discretised functions before performing the test. A relationship between MMD and weak convergence is highlighted and closed form expressions for the MMD and mean-embeddings when the distributions are Gaussian processes are given. Section \ref{sec:implementation} provides multiple examples of choices for the kernel hyper parameters and principled methods of constructing them. Section \ref{sec:numerics} contains multiple numerical experiments validating the theory in the paper, a simulation is performed to validate the scaling arguments of Section \ref{sec:scaling} and synthetic and real data sets are used to compare the performance of the kernel based test against existing functional two-sample tests.  Concluding remarks and thoughts about future work are provided in Section \ref{sec:conclusion}.

\section{Hilbert Space Modelling of Functional Data} \label{sec:inf_measures}

In this paper we shall follow the Hilbert space approach to functional data analysis and use this section to outline the required preliminaries \citep{cuevas2014partial,Hsing2015}. Before discussing random functions we establish notation for families of operators that will be used extensively.  Let $\calX$ be a real, separable Hilbert space with inner product $\langle\cdot,\cdot\rangle_{\calX}$ then $L(\calX)$ denotes the set of bounded linear maps from $\calX$ to itself, $L^{+}(\calX)$ denotes the subset of $L(\calX)$ of operators that are self-adjoint (also known as symmetric) and non-negative, meaning $\langle Tx,y\rangle_{\calX}\geq 0\:\forall   x,y\in\calX$. The subset of $L^{+}(\calX)$ of trace class operators is denoted $L^{+}_{1}(\calX)$ and by the spectral theorem \citep[Theorem A.5.13]{Steinwart2008} such operators can be diagonalised. This means for every $T\in L^{+}_{1}(\calX)$ there exists an orthonormal basis of eigenfunctions $\{e_{n}\}_{n=1}^{\infty}$ in $\mathcal{X}$ such that $Tx = \sum_{n=1}^{\infty}\lambda_n \langle x, e_n \rangle_{\calX} e_n$, where $\{\lambda_n\}_{n=1}^{\infty}$ are non-negative eigenvalues and the trace satisfies $\Tr(T) = \sum_{n=1}^{\infty}\lambda_n < \infty$. When the eigenvalues are square summable the operator is called Hilbert-Schmidt and the Hilbert-Schmidt norm is $\norm{T}_{HS}^{2} = \sum_{n=1}^{\infty}\lambda_{n}^{2}$. 

We now outline the Karhunen-Lo\`eve expansion of stochastic processes. Let $x(\cdot)$ be a stochastic process in $\calX = L^{2}([0,1])$, note the following will hold for a stochastic process taking values in any real, separable Hilbert space but we focus on $L^{2}([0,1])$ since it is the most common setting for functional data. Suppose that the pointwise covariance function $\bbE[x(s)x(t)] = k(s,t)$ is continuous, then the mean function $m(t) = \bbE[X(t)]$ is also in $\calX$. Define the covariance operator  $C_{k}\colon \calX\rightarrow \calX$ associated with $X$ by $C_{k}y(t) = \int_{0}^{1}k(s,t)y(s)ds$. Then $C_{k}\in L^{+}_{1}(\calX)$ and denote the spectral decomposition $C_{k}y= \sum_{n=1}^{\infty}\lambda_n \langle y, e_n \rangle_{\calX} e_n$. The Karhunen-Lo\`eve (KL) expansion \citep[Theorem 11.4]{Sullivan2015} provides a characterisation of the law of the process $x(\cdot)$ in terms of an infinite-series expansion.  More specifically, we can write $x(\cdot) \sim m + \sum_{n=1}^{\infty} \lambda_n^{1/2}\eta_n e_n(\cdot)$,
where $\lbrace \eta_n \rbrace_{n=1}^\infty$ are unit-variance uncorrelated random variables.  Additionally, Mercer's theorem  \citep{Steinwart2012} provides an expansion of the covariance as \sloppy$k(s, t) = \sum_{n=1}^\infty \lambda_n e_n(s)e_n(t)$ where the convergence is uniform.

An important case of random functions are Gaussian processes \citep{Rasmussen2006}. Given a kernel $k$, see Section \ref{sec:RKHS}, and a function $m$ we say $x$ is a Gaussian process with mean function $m$ and covariance function $k$ if for every finite collection of points $\{s_{n}\}_{n=1}^{N}$ the random vector $(x(s_{1}),\ldots,x(s_{N}))$ is a multivariate Gaussian random variable with mean vector $(m(s_{1}),\ldots,m(s_{N}))$ and covariance matrix $k(s_{n},s_{m})_{n,m=1}^{N}$. The mean function and covariance function completely determines the Gaussian process.  We write $x\sim\calGP(m,k)$ to denote the Gaussian process with mean function $m$ and covariance function $k$. If $x\sim\calGP(0,k)$ then in the Karhunen-Lo\`eve representation $\eta_{n}\sim\calN(0, 1 )$ and the $\eta_{n}$ are all independent. 

Gaussian processes that take values in $\calX$ can be associated with Gaussian measures on $\calX$. Gaussian measures are natural generalisations of Gaussian distributions on $\bbR^{d}$ to infinite dimensional spaces, which are defined by a mean element and covariance operator rather than a mean vector and covariance matrix, for an introduction see \citet[Chapter 1]{DaPrato2006}. Specifically $x\sim\calGP(m,k)$ can be associated with the Gaussian measure $N_{m, C_{k}}$ with mean $m$ and covariance operator $C_{k}$, the covariance operator associated with $k$ as outlined above. Similarly given any $m \in \mathcal{X}$ and $C \in L_1^+(\mathcal{X})$ then there exists a Gaussian measure $N_{m,C}$ with mean $m$ and covariance operator $C$ \citep[Theorem 1.12]{DaPrato2006}. In fact, the Gaussian measure $N_{m,C}$ is characterised as the unique probability measure on $\calX$ with Fourier transform $\widehat{N}_{m,C}(y) = \exp(i\langle m,y\rangle_{\calX}-\frac{1}{2}\langle Cy, y\rangle_{\calX})$. Finally, if $C$ is injective then a Gaussian measure with covariance operator $C$ is called non-degenerate and has full support on $\calX$ \citep[Proposition 1.25]{DaPrato2006}.

\section{Reproducing Kernel Hilbert Spaces and Maximum Mean Discrepancy} \label{sec:RKHS}

This section will outline what a kernel and a reproducing kernel Hilbert space is with examples and associated references. Subsection \ref{subsec:RKHS_def} defines kernels and RKHS, Subsection \ref{subsec:reps_of_MMD} defines MMD and the corresponding estimators and Subsection \ref{subsec:test_procedure} outlines the testing procedure.

\subsection{Kernels and Reproducing Kernel Hilbert Spaces}\label{subsec:RKHS_def}

Given a nonempty set $\calX$ a kernel is a function $k\colon\calX\times\calX\rightarrow\bbR$ which is symmetric, meaning $k(x,y)= k(y,x)$, for all $x,y\in\calX$, and positive definite, that is, the matrix $\lbrace k(x_n, x_m);\, n, m \in \lbrace 1, \ldots, N \rbrace \rbrace$ is positive semi-definite, for all $\lbrace x_n \rbrace_{n=1}^N \subset \calX$ and for $N \in \mathbb{N}$.  
For each kernel $k$ there is an associated Hilbert space of functions over $\calX$ known as the reproducing kernel Hilbert space (RKHS) denoted $\calH_{k}(\calX)$ \citep{Berlinet2004,Steinwart2008,Fasshauer2014}. RKHSs have found numerous applications in function approximation and inference for decades since their original application to spline interpolation \citep{wahba1990spline}.  Multiple detailed surveys exist in the literature \citep{saitoh2016theory,Paulsen2016}.  The RKHS associated with $k$ satisfies the following two properties i). $k(\cdot,x)\in \calH(\calX)$ for all $x\in\calX$ ii). $\langle f,k(\cdot,x)\rangle_{\calH(\calX)} = f(x)$ for all $x \in \calX$ and $f \in \calH(\calX)$. The latter is known as the reproducing property. The RKHS is constructed from the kernel in a natural way. The linear span of a kernel $k$ with one input fixed $\calH_{0}(\calX) = \left\{\sum_{n=1}^{N}a_{n}k(\cdot,x_{n})\colon N\in\bbN, \{a_{n}\}_{n=1}^{N}\subset\bbR, \{x_{n}\}_{n=1}^{N}\subset\calX\right\}$ is a pre-Hilbert space equipped with the following inner product $\langle f,g\rangle_{\calH_{0}(\calX)} = \sum_{n=1}^{N}\sum_{m=1}^{M}a_{n}b_{m}k(x_{n},y_{m})$
where $f = \sum_{n=1}^{N}a_{n}k(\cdot,x_{n})$ and $g = \sum_{m=1}^{M}b_{m}k(\cdot,y_{m})$. The RKHS $\calH_{k}(\calX)$ of $k$ is then obtained from $\calH_{0}(\calX)$ through completion. More specifically $\calH_{k}(\calX)$ is the set of functions which are pointwise limits of Cauchy sequences in $\calH_{0}(\calX)$ \citep[Theorem 3]{Berlinet2004}. The relationship between kernels and RKHS is one-to one, for every kernel the RKHS is unique and for every Hilbert space of functions such that there exists a function $k$ satisfying the two properties above it may be concluded that the $k$ is unique and a kernel. This result is known as the Aronszajn theorem \citep[Theorem 3]{Berlinet2004}. 

A kernel $k$ on $\mathcal{X} \subseteq \mathbb{R}^d$ is said to be \emph{translation invariant} if it can be written as $k(x,y) = \phi(x-y)$ for some $\phi$. Bochner's theorem, Theorem \ref{thm:bochner} in the Appendix, tells us that if $k$ is continuous and translation invariant then there exists a Borel meaure on $\calX$ such that $\hat{\mu}_{k}(x-y) = k(x,y)$ and we call $\mu_{k}$  the spectral measure of $k$. The spectral measure is an important tool in the analysis of kernel methods and shall become important later when discussing the two-sample problem. 

\subsection{Maximum Mean Discrepancy}\label{subsec:reps_of_MMD}

Given a kernel $k$ and associated RKHS $\calH_k(\calX)$ let $\calP$ be the set of Borel probability measures on $\calX$ and assuming $k$ is measurable define $\calP_k \subset \cal{P}$ as the set of all $P \in \calP_k$ such that \sloppy$\int k(x,x)^{\frac{1}{2}} dP(x) < \infty$. Note that $\calP_{k} = \calP$ if and only if $k$ is bounded \citep[Proposition 2]{Sriperumbudur2010} which is very common in practice and shall be the case for all kernels considered in this paper.  For $P, Q \in \calP_k$ we define the \emph{Maximum Mean Discrepancy} denoted $\text{MMD}_{k}(P,Q)$ as follows $\text{MMD}_{k}(P,Q) = \sup_{\norm{f}_{\calH_{k}(\calX)}\leq 1}\left\lvert \int fdP - \int fdQ\right\rvert$. This is an \emph{integral probability metric} \citep{muller1997integral,Sriperumbudur2010} and without further assumptions defines a pseudo-metric on $\calP_k$, which permits the possibility that $\text{MMD}_{k}(P,Q) = 0$ but $P \neq Q$. 

We introduce the \emph{mean embedding} $\Phi_k P$ of $P \in \calP_k$ into $\calH_k(\calX)$ defined by $\Phi_k P = \int k(\cdot, x)dP(x)$.  This can be viewed as the mean in $\calH_{k}(\calX)$ of the function $k(x,\cdot)$ with respect to $P$ in the sense of a Bochner integral \citep[Section 2.6]{Hsing2015}. Following \citet[Section 2]{Sriperumbudur2010} this allows us to write
\begin{align}
	\text{MMD}_{k}(P,Q)^{2} & = \left(\sup_{\norm{f}_{\calH_{k}(\calX)}\leq 1}\left\lvert\int f dP-\int fdQ\right\rvert\right)^{2}  \nonumber\\
	& = \left(\sup_{\norm{f}_{\calH_{k}(\calX)}\leq 1}\lvert\langle \Phi_{k}P-\Phi_{k}Q,f\rangle\rvert\right)^{2} \nonumber\\
	& = \norm{\Phi_{k}P-\Phi_{k}Q}_{\calH_{k}(\calX)}^{2}. \label{eq:MMD_HS}
\end{align}
The crucial observation which motivates the use of MMD as an effective measure of discrepancy is that the supremum can be eliminated using the reproducing property  of the inner product \citep[Section 2]{Sriperumbudur2010}.  This yields the following closed form representation
\begin{align}
	\text{MMD}_{k}(P,Q)^{2} &  = \int \int k(x,x')dP(x)dP(x') + \int\int k(y,y')dQ(y)dQ(y')\nonumber \\
	&\qquad-2\int\int k(x,y)dP(x)dQ(y). \label{eq:MMD_integral_version}
\end{align}

It is clear that $\MMD_{k}$ is a metric over $\calP_{k}$ if and only if the map $\Phi_{k}\colon\calP_{k}\rightarrow\calH_{k}(\calX)$ is injective. Given a subset $\mathfrak{P}\subseteq\calP_{k}$, a kernel is \emph{characteristic to $\mathfrak{P}$} if the map $\Phi_{k}$ is injective over $\mathfrak{P}$. In the case that $\mathfrak{P} = \calP$ we just say that $k$ is characteristic. Various works have provided sufficient conditions for a kernel over finite dimensional spaces to be characteristic \citep{Sriperumbudur2010,Sriperumbudur2011,Simon-Gabriel2018}.

Given independent samples $X_n = \lbrace x_i \rbrace_{i=1}^n$ from $P$ and $Y_m = \lbrace y_i \rbrace_{i=1}^m$ from $Q$ we wish to estimate $\text{MMD}_k(P, Q)^{2}$.  A number of estimators have been proposed.  For clarity of presentation we shall assume that $m = n$, but stress that all of the following can be generalised to situations where the two data-sets are unbalanced.  Given samples $X_n$ and $Y_n$, the following U-statistic is an unbiased estimator of $\text{MMD}^2_k(P, Q)^{2}$
\begin{align}
	\widehat{\text{MMD}}_{k}(X_{n},Y_{n})^{2}\coloneqq \frac{1}{n(n-1)}\sum_{i\neq j}^{n}h(z_{i},z_{j}),\label{eq:MMD_h_representation}
\end{align}
where $z_{i} = (x_{i},y_{i})$ and $h(z_{i},z_{j}) = k(x_{i},x_{j}) + k(y_{i},y_{j}) - k(x_{i},y_{j})-k(x_{j},y_{i})$.  This estimator can be evaluated in $O(n^2)$ time.  An unbiased linear time estimator proposed in \citet{Wittawat2017} is given by 
\begin{align}
	\widehat{\text{MMD}}_{k,\text{lin}}(X_{n},Y_{n})^{2}\coloneqq \frac{2}{n}\sum_{i=1}^{n/2}h(z_{2i-1},z_{2i}), \label{eq:linear_time_est}
\end{align}
where it is assumed that $n$ is even.  While the cost for computing $\widehat{\text{MMD}}_{k,\text{lin}}(X_{n},Y_{n})^{2}$ is only $O(n)$ this comes at the cost of reduced efficiency, i.e. $\mbox{Var}(\widehat{\text{MMD}}_{k}(X_{n},Y_{n})^{2}) <  \mbox{Var}(\widehat{\text{MMD}}_{k,\text{lin}}(X_{n},Y_{n})^{2})$, see for example \citet{Sutherland2019}.  Various probabilistic bounds have been derived on the error between the estimator and $\MMD_{k}(P,Q)^{2}$  \citep[Theorem 10, Theorem 15]{Gretton2012}.

\subsection{The Kernel Two-Sample Test}\label{subsec:test_procedure}

Given independent samples $X_n = \lbrace x_i \rbrace_{i=1}^n$ from $P$ and $Y_n = \lbrace y_i \rbrace_{i=1}^n$ from $Q$ we seek to test the hypothesis $H_0\colon P = Q$ against the alternative hypothesis $H_1\colon P \neq Q$ without making any distributional assumptions.   The \emph{kernel two-sample test} of \citet{Gretton2012}  employs an estimator of MMD as the test statistic.  Indeed, fixing a characteristic kernel $k$, we reject $H_0$ if $\widehat{\MMD}_{k}(X_{n},Y_{n})^{2} > c_{\alpha}$, where $c_{\alpha}$ is a threshold selected to ensure a false-positive rate of $\alpha$.   While we do not have a closed-form expression for $c_{\alpha}$, it can be estimated using a permutation bootstrap.  More specifically, we randomly shuffle $X_n \cup Y_n$, split it into two data sets $X_{n}'$ and $Y_{n}'$, from which  $\widehat{\MMD}_{k}(X_{n}',Y_{n}')^{2}$ is calculated. This is repeated numerous times so that an estimator of the threshold $\hat{c}_{\alpha}$ is then obtained as the $(1-\alpha)$-th quantile of the resulting empirical distribution.  The same test procedure may be performed using the linear time MMD estimator as the test statistic.

The efficiency of a test is characterised by its false-positive rate $\alpha$ and its its false-negative rate $\beta$.  The power of a test is a measure of its ability to correctly reject the null hypothesis.  More specifically,  fixing $\alpha$, and obtaining an estimator $\hat{c}_{\alpha}$ of the threshold, we define the power of the test at $\alpha$ to be $\mathbb{P}(n\widehat{\MMD}_k(X_n,Y_n)^2\geq \hat{c}_{\alpha})$. Invoking the central limit theorem for U-statistics \citep{Serfling1980} we can quantify the decrease in variance of the unbiased MMD estimators, asymptotically as $n\rightarrow \infty$.

\begin{theorem}{\citep[Corollary 16]{Gretton2012}}\label{thm:clt_mmd_test}
Suppose that $\mathbb{E}_{x\sim P,y\sim Q}[h^2(x,y)] < \infty$.  Then under the alternative hypothesis $P\neq Q$, the estimator $\widehat{\eMMD}_{k}(X_{n},Y_{n})^{2}$ converges in distribution to a Gaussian as follows
$$
    \sqrt{n}\left(\widehat{\eMMD}_{k}(X_{n},Y_{n})^{2} - \eMMD_k^2(P, Q)\right) \xrightarrow{D} \mathcal{N}(0, 4\xi_{1}),\quad n\rightarrow \infty,
$$
where $\xi_{1} = \emph{Var}_{z}\left[\bbE_{z'}[h(z,z')]\right]$.  An analogous result holds for the linear-time estimator, with $\xi_{2} = \emph{Var}_{z,z'}\left[h(z,z')\right]$ instead of $\xi_{1}$.
\end{theorem}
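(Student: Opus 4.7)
The plan is to recognise $\widehat{\MMD}_k(X_n,Y_n)^2$ as a second-order U-statistic in the i.i.d.\ sequence $Z_i := (X_i,Y_i)$ with symmetric kernel $h$, and then invoke the standard central limit theorem for non-degenerate U-statistics (\citet{Serfling1980}, Chapter 5).

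First I would check the setup: the pairs $Z_i$ are i.i.d.\ under the product law $P \otimes Q$, the kernel $h$ defined in \eqref{eq:MMD_h_representation} is symmetric in its two arguments, and $\mathbb{E}[h(Z,Z')] = \MMD_k^2(P,Q)$ follows from \eqref{eq:MMD_integral_version} together with independence of $X_i$ and $Y_i$ across $i$. With $\mathbb{E}[h^2] < \infty$ in hand, I would then apply the Hoeffding (H\'ajek) decomposition
\begin{equation*}
    \widehat{\MMD}_k(X_n,Y_n)^2 - \MMD_k^2(P,Q) \;=\; \frac{2}{n}\sum_{i=1}^n g(Z_i) \;+\; R_n,
\end{equation*}
where $g(z) := \mathbb{E}_{z'}[h(z,z')] - \MMD_k^2(P,Q)$ is the first-order projection and $R_n$ is a centred second-order remainder. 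A direct variance computation gives $\mathbb{E}[R_n^2] = O(n^{-2})$, so $\sqrt{n}\,R_n \to 0$ in probability by Chebyshev, while the univariate CLT applied to the i.i.d.\ sum $\frac{2}{\sqrt{n}}\sum_i g(Z_i)$ yields convergence to $\mathcal{N}(0, 4\xi_1)$ with $\xi_1 = \mathrm{Var}(g(Z))$. Slutsky then delivers the stated limit.

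The main subtlety, which is where I would spend the most care, is the non-degeneracy condition $\xi_1 > 0$. Under the null $P = Q$ one has $g \equiv 0$ and the U-statistic becomes degenerate, collapsing onto the second-order term and producing a weighted chi-squared limit rather than a Gaussian; this is exactly why the theorem is restricted to $P \neq Q$. Strictly, $\MMD_k^2(P,Q) > 0$ does not automatically give $\xi_1 > 0$, so one must verify that whenever $k$ is characteristic and $P \neq Q$ the function $z \mapsto \mathbb{E}_{z'}[h(z,z')]$ is non-constant; this follows by expanding the four terms of $h$ against $P \otimes Q$ and using injectivity of the mean embedding.

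For the linear-time estimator \eqref{eq:linear_time_est} no Hoeffding decomposition is needed: it is simply a sample mean of $n/2$ i.i.d.\ summands $h(Z_{2i-1},Z_{2i})$, each with mean $\MMD_k^2(P,Q)$ and variance $\xi_2 = \mathrm{Var}(h(Z,Z'))$, so the standard univariate CLT delivers the analogous statement directly after the appropriate rescaling in $n$.
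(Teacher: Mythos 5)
Your overall route is the same one the paper relies on: the paper gives no proof of its own, simply citing \citet[Corollary 16]{Gretton2012}, which in turn rests on the classical CLT for non-degenerate U-statistics in \citet{Serfling1980} --- i.e., exactly your Hoeffding projection plus Slutsky argument. The main body of your proposal (symmetry of $h$, $\bbE[h]=\MMD_k(P,Q)^2$, the $O(n^{-2})$ second moment of the remainder, and the plain i.i.d.\ sample-mean argument for the linear-time statistic) is correct.

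The one step that would fail is your claimed resolution of the non-degeneracy issue. Writing $w = \Phi_k P - \Phi_k Q$, one computes $\bbE_{z'}[h(z,z')] = w(x) - w(y)$ for $z=(x,y)$, so $\xi_1 = \mathrm{Var}_{x\sim P}[w(x)] + \mathrm{Var}_{y\sim Q}[w(y)]$. Injectivity of the mean embedding guarantees $w \neq 0$ in $\calH_k(\calX)$ when $P\neq Q$, but it does not prevent $w$ from being almost surely constant under each of $P$ and $Q$ separately: take $P=\delta_a$, $Q=\delta_b$ with $a\neq b$ and any characteristic $k$; then $\MMD_k(P,Q)>0$ yet $\xi_1=0$. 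So the implication ``$k$ characteristic and $P\neq Q$ imply $\xi_1>0$'' is false, and expanding the four terms of $h$ will not rescue it. This does not sink the theorem: when $\xi_1=0$ the first-order projection vanishes identically, $\sqrt{n}\big(\widehat{\MMD}_{k}(X_n,Y_n)^{2} - \MMD_k(P,Q)^2\big) = \sqrt{n}\,R_n \rightarrow 0$ in probability, and this is convergence to the degenerate Gaussian $\mathcal{N}(0,0)$, consistent with the statement as written. You should therefore either read the conclusion as including the degenerate case or impose $\xi_1>0$ as an explicit hypothesis (as Serfling does); you cannot derive it from characteristicness. A minor final point: the linear-time estimator is an average of $n/2$ i.i.d.\ terms, so its variance is $2\xi_2/n$ and the scaling constant in the ``analogous result'' differs from the quadratic-time case by a factor of two --- worth pinning down rather than deferring to ``the appropriate rescaling''.
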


In particular, under the conditions of Theorem \ref{thm:clt_mmd_test}, for large $n$, the power of the test will satisfy the following asymptotic result
\begin{align}
\mathbb{P}\left(n\widehat{\MMD}_{k}(X_{n},Y_{n})^{2} > \widehat{c}_{\alpha} \right) \approx \Phi\left(\sqrt{n}\frac{\MMD_k(P, Q)^2}{2\sqrt{\xi_{1}}} - \frac{c_\alpha}{2\sqrt{n \xi_{1}}} \right),\label{eq:first_power_surr}
\end{align}
where $\Phi$ is the CDF for a standard Gaussian distribution and $\xi_{1} = \text{Var}_{z}\left[\bbE_{z'}[h(z,z')]\right]$.  The analogous result for the linear-time estimator holds with $\xi_{2} = \text{Var}_{z,z'}\left[h(z,z')\right]$ instead of $\xi_{1}$ \citep{Ramdas2014_mean_shift,Liu2020}. This suggests that the test power can be maximised by maximising \sloppy${\MMD_k(P, Q)^2}/{\sqrt{\xi_{1}}}$ which can be seen as a signal-to-noise-ratio \citep{Liu2020}.  It is evident from previous works that the properties of the kernel will have a very significant impact on the power of the test, and methods have been proposed for increasing test power by optimising the kernel parameters using the signal-to-noise-ratio as an objective \citep{Sutherland2016,Ramdas2014_mean_shift,Liu2020}.

\section{Resolution Independent Tests for Gaussian Processes}\label{sec:scaling}

To motivate the construction of kernel two-sample tests for random functions, in this section we will consider the case where the samples $X_n$ and $Y_n$ are independent realisations of two Gaussian processes, observed along a regular mesh $\Xi_{N} = \lbrace t_1, \ldots, t_N\rbrace$ of $N$ points in $\mathcal{D}$ where $\calD\subset\bbR^{d}$ is some compact set. Therefore $N$ will be the dimension of the observed vectors. To develop ideas, we shall focus on a  mean-shift alternative, where the underlying Gaussian processes are given by $\mathcal{GP}(0, k_{0})$ and $\mathcal{GP}(m, k_{0})$ respectively, where $k_{0}$ is a covariance function, and $m\in L^{2}(\calD)$ is the mean function. We use the subscript on $k_{0}$ to distinguish it from the kernel $k$ we use to perform the test. We will use the linear time test due to easier calculations. This reduces to a multivariate two-sample hypothesis test problem on $\mathbb{R}^N$, with samples  $X_n = \lbrace x_i \rbrace_{i=1}^n$ from $P=\mathcal{N}(0, \Sigma)$ and $Y_n = \lbrace y_i \rbrace_{i=1}^n$ from $Q = \mathcal{N}(m_{N}, \Sigma)$, where $\Sigma_{i,j} = k_{0}(t_i, t_j)$ for $i,j=1,\ldots,N$ and $m_{N} = (m(t_1), \ldots, m(t_N))^\top$.

We consider applying a two-sample kernel test as detailed in Section \ref{sec:RKHS}, with a Gaussian kernel $k(x,y) = \exp(-\frac{1}{2}\gamma_{N}^{-2}\lVert x - y \rVert_{2}^2)$ on $\bbR^{N}$ where $\gamma_{N}$ may depend on $N$. The large $N$ limit was studied in \citet{ramdas2015decreasing} but not in the context of functional data. This motivates the question whether there is a scaling of $\gamma_{N}$ with respect to $N$ which, employing the structure of the underlying random functions, guarantees that the statistical power remains independent of the mesh size $N$.  To better understand the influence of bandwidth on power, we use the signal-to-noise ratio as a convenient proxy, and study its behaviour in the large $N$ limit. We say the mesh $\Xi_{N}$ satisfies the Riemann scaling property if  $\frac{1}{N}\norm{m_{N}}_{2}^{2} = \frac{1}{N}\sum_{i=1}^{N}m(t_{i})^{2}\rightarrow\int_{\calD}m(t)^{2}dt = \norm{m}_{L^{2}(\calD)}^{2}$ as $N\rightarrow\infty$ for all $m\in L^{2}(\calD)$, this will be used in the next result to characterise the signal-to-noise ratio from the previous subsection. 

\begin{proposition}\label{prop:scaling}
    Let $P, Q$ be as above with $
    \Xi_{N}$ satisfying the Riemann scaling property and $\gamma_{N}=\Omega(N^{\alpha})$ with $\alpha > 1/2$ then if $k_{0}(s,t) = \delta_{st}$
    \begin{align}\label{eq:scaling_id}
        \frac{\eMMD_{k}(P,Q)^{2}}{\sqrt{\xi_{2}}} \sim \frac{\sqrt{N}\norm{m}_{L^{2}(\calD)}^{2}}{2\sqrt{1+\norm{m}_{L^{2}(\calD)}^{2}}},
    \end{align}
    and if $k_{0}$ is continuous and bounded  then
     \begin{align}\label{eq:scaling_ratio}
        \frac{\eMMD_{k}(P,Q)^{2}}{\sqrt{\xi_{2}}} \sim \frac{\norm{m}_{L^{2}(\calD)}^{2}}{2\sqrt{\norm{C_{k_{0}}}_{HS}^{2}+\norm{C^{1/2}_{k_{0}}m}_{L^{2}(\calD)}^{2}}},
    \end{align}
    where $\sim$ means asymptotically equal in the sense that the ratio of the left and right hand side converges to one as $N\rightarrow\infty$.
\end{proposition}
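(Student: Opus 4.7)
The plan is to expand both the numerator $\MMD_k(P,Q)^{2}$ and the denominator $\xi_{2}$ as power series in the small parameter $\gamma_N^{-2}$, keep the leading-order terms, and then pass from finite sums to $L^{2}(\calD)$ integrals via the Riemann scaling property. Under $\alpha>1/2$ we have $\gamma_N^{-2}\Tr(\Sigma)\le C\gamma_N^{-2}N = O(N^{1-2\alpha})\to 0$ for any bounded $k_0$, which is exactly the smallness needed for these expansions to be valid and for all remainder terms to be asymptotically lower order.

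For the numerator I use the standard Gaussian-quadratic-form identity: if $Z\sim\calN(\mu,\Lambda)$ on $\bbR^{N}$ then
\[
\bbE\left[\exp\left(-\tfrac{1}{2}\gamma_N^{-2}\norm{Z}_{2}^{2}\right)\right] = \det(I+\gamma_N^{-2}\Lambda)^{-1/2}\exp\left(-\tfrac{1}{2}\gamma_N^{-2}\mu^{\top}(I+\gamma_N^{-2}\Lambda)^{-1}\mu\right).
\]
Applying this to $X-X'\sim\calN(0,2\Sigma)$, $Y-Y'\sim\calN(0,2\Sigma)$ and $X-Y\sim\calN(-m_N,2\Sigma)$ gives
\[
\MMD_k(P,Q)^{2} = 2\det(I+2\gamma_N^{-2}\Sigma)^{-1/2}\left[1-\exp\left(-\tfrac{1}{2}\gamma_N^{-2}m_N^{\top}(I+2\gamma_N^{-2}\Sigma)^{-1}m_N\right)\right].
\]
Since $\gamma_N^{-2}\Tr(\Sigma)\to 0$ the determinant factor equals $1+o(1)$, and since $\gamma_N^{-2}\norm{m_N}_{2}^{2}\to 0$ the bracket equals $\tfrac{1}{2}\gamma_N^{-2}\norm{m_N}_{2}^{2}(1+o(1))$, giving $\MMD_k(P,Q)^{2} = \gamma_N^{-2}\norm{m_N}_{2}^{2}(1+o(1))$.

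For the denominator I Taylor-expand $h(z,z')=k(x,x')+k(y,y')-k(x,y')-k(x',y)$ directly. The zeroth-order $1$-terms cancel via the $+,+,-,-$ signs, and when $\norm{x-x'}_{2}^{2}$, $\norm{y-y'}_{2}^{2}$, $\norm{x-y'}_{2}^{2}$, $\norm{x'-y}_{2}^{2}$ are expanded the single-variable terms $\norm{x}_{2}^{2}$, $\norm{x'}_{2}^{2}$, $\norm{y}_{2}^{2}$, $\norm{y'}_{2}^{2}$ cancel as well, leaving $h(z,z')=\gamma_N^{-2}(x-y)^{\top}(x'-y')+R$, where the bound $|e^{-s}-1+s|\le s^{2}/2$ for $s\ge 0$ controls $R$ by $\gamma_N^{-4}/2$ times a sum of fourth powers of the four Gaussian differences. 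Setting $u=x-y$ and $v=x'-y'$, which are independent $\calN(-m_N,2\Sigma)$, the identity $\bbE[(u^{\top}v)^{2}]=\Tr(\bbE[uu^{\top}]\,\bbE[vv^{\top}])=\Tr((2\Sigma+m_N m_N^{\top})^{2})$ yields after expansion a $\norm{m_N}_{2}^{4}$ contribution that cancels exactly against $\bbE[u^{\top}v]^{2}=\norm{m_N}_{2}^{4}$, leaving
\[
\xi_{2} = 4\gamma_N^{-4}\left[\Tr(\Sigma^{2})+m_N^{\top}\Sigma m_N\right](1+o(1)).
\]

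Combining these asymptotics and invoking the Riemann scaling property completes the proof. When $k_{0}(s,t)=\delta_{st}$ and so $\Sigma=I$, one has $\Tr(\Sigma^{2})=N$ and $m_N^{\top}\Sigma m_N=\norm{m_N}_{2}^{2}\sim N\norm{m}_{L^{2}(\calD)}^{2}$, and forming the ratio produces (\ref{eq:scaling_id}). When $k_{0}$ is continuous and bounded, an analogous two-dimensional Riemann sum applied to the continuous functions $k_{0}^{2}$ and $(s,t)\mapsto m(s)k_{0}(s,t)m(t)$ gives $N^{-2}\Tr(\Sigma^{2})\to\norm{C_{k_0}}_{HS}^{2}$ and $N^{-2}m_N^{\top}\Sigma m_N\to\langle m,C_{k_0}m\rangle_{L^{2}(\calD)}=\norm{C_{k_0}^{1/2}m}_{L^{2}(\calD)}^{2}$, and combining with $\MMD_k(P,Q)^{2}\sim\gamma_N^{-2}N\norm{m}_{L^{2}(\calD)}^{2}$ yields (\ref{eq:scaling_ratio}). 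The main obstacle is controlling the remainder $R$ in both expansions: each additional factor of $\gamma_N^{-2}$ in $R$ is accompanied by a Gaussian fourth moment which by Isserlis' theorem is $O(\Tr(\Sigma)^{2}+\Tr(\Sigma^{2})+\norm{m_N}_{2}^{4})=O(N^{2})$, so the induced relative error is $O(\gamma_N^{-2}N)=O(N^{1-2\alpha})\to 0$ precisely when $\alpha>1/2$, explaining why this is the assumed scaling regime.
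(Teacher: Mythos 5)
Your route to the numerator is the same as the paper's: both reduce $\MMD_k(P,Q)^2$ to the closed Gaussian form $2\det(I+2\gamma_N^{-2}\Sigma)^{-1/2}\bigl[1-\exp\bigl(-\tfrac{1}{2}\gamma_N^{-2}m_N^\top(I+2\gamma_N^{-2}\Sigma)^{-1}m_N\bigr)\bigr]$ and extract the leading term $\gamma_N^{-2}\norm{m_N}_2^2$. For the denominator, however, you take a genuinely different and arguably cleaner path. The paper starts from the exact closed-form expression for $\xi_2$ (its Theorem on the variance of the estimator, built from Gaussian integrals of the SE kernel) and then performs a lengthy Taylor expansion of determinants via the Girard--Waring formulas, tracking roughly ten remainder terms until everything but $\tfrac{4}{\gamma^4}(\Tr(\Sigma^2)+\langle m_N,\Sigma m_N\rangle)$ cancels. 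You instead linearise the U-statistic kernel $h$ itself, exploit the $+,+,-,-$ cancellation to get $h\approx\gamma_N^{-2}(x-y)^\top(x'-y')$, and compute the variance of the bilinear form exactly; the cancellation of the $\norm{m_N}_2^4$ terms you note reproduces the paper's leading term with far less bookkeeping. This is a legitimate alternative derivation of the same asymptotics.

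There is, however, a genuine gap in your remainder control for $\xi_2$. Writing $h=A+R$ with $A=\gamma_N^{-2}(x-y)^\top(x'-y')$, what must be shown is $\mathrm{Var}(R)=o(\mathrm{Var}(A))$ (Cauchy--Schwarz then handles the cross term). Your stated bound $\abs{e^{-s}-1+s}\le s^2/2$ controls $R$ pointwise by $\tfrac{1}{8}\gamma_N^{-4}$ times a sum of fourth powers, hence controls $\bbE[\abs{R}]$ and $\bbE[R^2]$; but $R$ has a nonvanishing mean of order $\gamma_N^{-4}\Tr(\Sigma)\norm{m_N}_2^2=O(\gamma_N^{-4}N^2)$ (the $\Tr(\Sigma)^2$ contributions cancel across the four terms but the $\Tr(\Sigma)\norm{m_N}_2^2$ contributions do not), so $\bbE[R^2]\ge\bbE[R]^2=O(\gamma_N^{-8}N^4)$. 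In the white-noise case $\mathrm{Var}(A)=\Theta(\gamma_N^{-4}N)$, so the bound $\mathrm{Var}(R)\le\bbE[R^2]$ gives a relative error of order $\gamma_N^{-4}N^3=O(N^{3-4\alpha})$, which does not vanish for $1/2<\alpha\le 3/4$ --- precisely the regime closest to the median-heuristic scaling that motivates the proposition. The claimed relative error $O(\gamma_N^{-2}N)$ therefore does not follow from the bound you invoke. The fix is to centre the remainder: $\mathrm{Var}(R)$ is governed by $\mathrm{Var}(\norm{w}_2^4)=O(\Tr(\Sigma)^2(\Tr(\Sigma^2)+m_N^\top\Sigma m_N))$ rather than by $\bbE[\norm{w}_2^4]^2$, giving $\mathrm{Var}(R)=O(\gamma_N^{-8}N^3)$ in the white-noise case and $O(\gamma_N^{-8}N^4)$ in the continuous case, both of which are $o(\mathrm{Var}(A))$ exactly when $\alpha>1/2$. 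With that correction your argument closes and recovers both \eqref{eq:scaling_id} and \eqref{eq:scaling_ratio}.
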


The proof of this result is in the Appendix and generalises \citet{ramdas2015decreasing} by considering non-identity $\Sigma$. The way this ratio increases with $N$, the number of observation points, in the white noise case makes sense since each observation is revealing new information about the signal as the noise is independent at each observation. On the other hand the non-identity covariance matrix means the noise is not independent at each observation and thus new information is not obtained at each observation point. Indeed the stronger the correlations, meaning the slower the decay of the eigenvalues of the covariance operator $C_{k_{0}}$, the smaller this ratio shall be since the Hilbert-Schmidt norm in the denominator will be larger. 

It is important to note that the ratio in the right hand sides of \eqref{eq:scaling_ratio} and \eqref{eq:scaling_id} are independent of the choice of $\alpha$ once $\alpha > 1/2$ meaning that once greater than $1/2$ this parameter will be ineffective for obtaining greater testing power. The next subsection discusses how $\alpha = 1/2$ provides a scaling resulting in kernels defined directly over function spaces, facilitating other methods to gain better test power.  

\subsection{Kernel Scaling}
Proposition \ref{prop:scaling} does not include the case $\gamma_{N}=\Theta (N^{1/2})$ however it can be shown that the ratio does not degenerate in this case, see Theorem \ref{thm:MMD_two_GPs} and Theorem \ref{thm:variance_of_estimator_mean_shift}. In fact, the two different scales of the ratio, when $\Sigma$ is the identity matrix or a kernel matrix, still occur. This is numerically verified in Section \ref{sec:numerics}. 

Suppose $\gamma_{N} = \gamma_{0}N^{1/2}$ for some $\gamma_{0}\in\bbR$ and one uses a kernel of the form $k(x,y) = f(\gamma_{N}^{-2}\norm{x-y}_{2}^{2})$ over $\bbR^{N}$ for some continuous $f$. Suppose now though that our inputs shall be $x_{N},y_{N}$, discretisations of functions $x,y\in L^{2}(\calD)$ observed on a mesh $\Xi_{N}$ that satisfies the Riemann scaling property. Then as the mesh gets finer we observe the following scaling
\begin{align*}
    k(x_{N},y_{N}) = f(\gamma_{N}^{-2}\norm{x_{N}-y_{N}}_{2}^{2})\xrightarrow{N\rightarrow\infty} f(\gamma_{0}^{-2}\norm{x-y}_{L^{2}(\calD)}^{2}).
\end{align*}
Therefore the kernel, as the discretisation resolution increases, will converge to a kernel over $L^{2}(\calD)$ where the Euclidean norm is replaced with the $L^{2}(\calD)$ norm. For example the Gauss kernel would become $\exp(-\gamma_{0}^{-2}\norm{x-y}_{L^{2}(\calD)}^{2})$. 

This scaling coincidentally is similar to the scaling of the widely used \emph{median heuristic} defined as 
\begin{align}
\label{eq:median_heuristic}
{\gamma}^{2} = \text{Median}\big\{\norm{a-b}_{2}^{2}\colon a,b\in \{x_{i}\}_{i=1}^{n}\cup\{y_{i}\}_{i=1}^{m}, a\neq b\big\},
\end{align}
where $\{x_{i}\}_{i=1}^{n}$ are the samples from $P$, $\{y_{i}\}_{i=1}^{n}$ samples from $Q$. It was not designed with scaling in mind however in \citet{Ramdas2014_mean_shift} it was noted that it results in a $\gamma^{2} = \Theta(N)$ scaling for the mean shift, identity matrix case. The next lemma makes this more precise by relating the median of the squared distance to its expectation. 

\begin{lemma}\label{lem:GP_median_heur}
	Let $P = \mathcal{N}(\mu_1, \Sigma_1)$  and $Q = \mathcal{N}(\mu_2, \Sigma_2)$ be independent Gaussian distributions on $\mathbb{R}^N$ then $\bbE_{x\sim P,y\sim Q}[\norm{x-y}_{2}^{2}] = \emph{Tr}(\Sigma_{1}+\Sigma_{2}) + \norm{\mu_{1}-\mu_{2}}_{2}^{2}$ and 
	\begin{align*}
	    \left\lvert\frac{\emph{Median}_{x\sim P,y\sim Q}[\norm{x-y}_{2}^{2}]}{\bbE_{x\sim P,y\sim Q}[\norm{x-y}_{2}^{2}]}-1\right\rvert \leq \sqrt{2}\left(1-\frac{\norm{\mu_{1}-\mu_{2}}_{2}^{4}}{(\emph{Tr}(\Sigma_{1}+\Sigma_{2}) + \norm{\mu_{1}-\mu_{2}}_{2}^{2})^{2}}\right)^{\frac{1}{2}},
	\end{align*}
	in particular if $P,Q$ are discretisations of Gaussian processes \sloppy{
	$\calGP(m_{1},k_{1}),\calGP(m_{2},k_{2})$} on a mesh $\Xi_{N}$ of $N$ points satisfying the Riemann scaling property over some compact $\calD\subset\bbR^{d}$  with $m_{1},m_{2}\in L^{2}(\calD)$ and $k_{1},k_{2}$ continuous then $\bbE_{x\sim P,y\sim Q}[\norm{x-y}_{2}^{2}]\sim N(\emph{Tr}(C_{k_{1}}+C_{k_{2}})+\norm{m_{1}-m_{2}}_{L^{2}(\calD)}^{2})$ and as $N\rightarrow\infty$ the right hand side of the above inequality converges to
	\begin{align*}
	   \sqrt{2}\left(1-\frac{\norm{m_{1}-m_{2}}_{L^{2}(\calD)}^{4}}{(\emph{Tr}(C_{k_{1}}+C_{k_{2}}) + \norm{m_{1}-m_{2}}_{L^{2}(\calD)}^{2})^{2}}\right)^{\frac{1}{2}}.
	\end{align*}
\end{lemma}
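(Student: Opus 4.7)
The plan is to compute the first two moments of $Z := \norm{x-y}_{2}^{2}$ explicitly, translate a variance bound into a mean--median ratio bound, and then pass to the continuum via the Riemann scaling hypothesis.

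\emph{Moments.} By independence, $x-y \sim \calN(\mu,\Sigma)$ with $\mu:=\mu_{1}-\mu_{2}$ and $\Sigma:=\Sigma_{1}+\Sigma_{2}$. Decomposing $x-y = \mu + w$ with $w\sim\calN(0,\Sigma)$ and expanding
\[
Z \;=\; \norm{\mu}_{2}^{2} + 2\mu^{\top} w + w^{\top} w,
\]
I read off the first moment formula $\bbE[Z] = \Tr(\Sigma) + \norm{\mu}_{2}^{2}$. For the variance, standard Gaussian quadratic--form identities give $\mathrm{Var}(w^{\top}w)=2\Tr(\Sigma^{2})$ and $\mathrm{Var}(2\mu^{\top}w)=4\mu^{\top}\Sigma\mu$; the covariance between these two terms vanishes since the cross term reduces to a third moment of a centred Gaussian. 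Hence $\mathrm{Var}(Z) = 2\Tr(\Sigma^{2}) + 4\mu^{\top}\Sigma\mu$.

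\emph{From variance to median bound.} I invoke the classical inequality $\lvert\bbE[Z]-\mathrm{Median}(Z)\rvert \leq \sqrt{\mathrm{Var}(Z)}$, valid for any square-integrable random variable via $\lvert\bbE[Z]-m\rvert \leq \bbE\lvert Z-m\rvert \leq \bbE\lvert Z-\bbE[Z]\rvert \leq \sqrt{\mathrm{Var}(Z)}$, using that the median minimises $L^{1}$ deviation and Jensen's inequality. Two PSD bounds then reshape the variance into the right form: $\Tr(\Sigma^{2}) = \sum_{i}\lambda_{i}^{2} \leq (\sum_{i}\lambda_{i})^{2} = \Tr(\Sigma)^{2}$, and $\mu^{\top}\Sigma\mu \leq \norm{\Sigma}_{\op}\norm{\mu}_{2}^{2} \leq \Tr(\Sigma)\norm{\mu}_{2}^{2}$. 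Writing $T := \Tr(\Sigma)$ and $M := \norm{\mu}_{2}^{2}$ gives $\mathrm{Var}(Z) \leq 2T(T+2M)$, and after dividing by $\bbE[Z] = T+M$ the elementary identity $T(T+2M) = (T+M)^{2}-M^{2}$ collapses the right-hand side to precisely $\sqrt{2}\sqrt{1 - M^{2}/(T+M)^{2}}$, which is the stated bound.

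\emph{Continuum limit.} For the discretised Gaussian process case, continuity of the diagonals $t\mapsto k_{i}(t,t)$ and of $m_{i}$, combined with the Riemann scaling property, gives $\Tr(\Sigma_{i}) = \sum_{n}k_{i}(t_{n},t_{n}) \sim N\Tr(C_{k_{i}})$ and $\norm{\mu_{1}-\mu_{2}}_{2}^{2} \sim N\norm{m_{1}-m_{2}}_{L^{2}(\calD)}^{2}$, whence $\bbE[Z] \sim N(\Tr(C_{k_{1}}+C_{k_{2}}) + \norm{m_{1}-m_{2}}_{L^{2}(\calD)}^{2})$. Because $T_{N}$ and $M_{N}$ both grow linearly in $N$, the factor $N^{2}$ cancels in $M_{N}^{2}/(T_{N}+M_{N})^{2}$, so the bound converges to the stated continuum expression. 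The only steps requiring any real care are the vanishing of the cross covariance by odd parity in $w$ and the careful choice of PSD bounds that produce the exact algebraic identity yielding the precise form in the statement; everything else is routine.
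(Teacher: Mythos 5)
Your proof is correct and follows essentially the same route as the paper's: the mean--median--standard-deviation inequality, the explicit first and second moments of $\norm{x-y}_{2}^{2}$, and the Riemann-scaling limit. In fact you are slightly more careful than the paper at one point, since the paper writes $\mathrm{Var}[\norm{x-y}_{2}^{2}] = 2\Tr(\Sigma)^{2}+4\norm{\mu}_{2}^{2}\Tr(\Sigma)$ where (given its own formula for the fourth moment) this should be an inequality obtained exactly via the bounds $\Tr(\Sigma^{2})\leq\Tr(\Sigma)^{2}$ and $\langle\mu,\Sigma\mu\rangle\leq\Tr(\Sigma)\norm{\mu}_{2}^{2}$ that you state explicitly.
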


The above lemma does not show that the median heuristic results in $\gamma_{N} = \gamma_{0}N^{1/2}$ but relates it to the expected squared distance which does scale directly as $\gamma_{0}N^{1/2}$. Therefore investigating the properties of such a scaling is natural. 

Since $L^{2}(\calD)$ is a real, separable Hilbert space when using kernels defined directly over $L^{2}(\calD)$ in later sections we can leverage the theory of probability measures on such Hilbert spaces to deduce results about the testing performance of such kernels. In fact, we shall move past $L^{2}(\calD)$ and obtain results for kernels over arbitrary real, separable Hilbert spaces. Note that a different scaling of $\gamma_{N}$ would not result in such a scaling of the norm to $L^{2}(\calD)$ so such theory cannot be applied.

\section{Kernels and RKHS on Function Spaces} \label{sec:kernel}

For the rest of the paper, unless specified otherwise, for example in Theorem \ref{thm:admissible_char}, the spaces $\calX,\calY$ will be real, separable Hilbert spaces with inner products and norms $\langle\cdot,\cdot\rangle_{\calX},\langle\cdot,\cdot\rangle_{\calY},\norm{\cdot}_{\calX},\norm{\cdot}_{\calY}$. We adopt the notation in Section \ref{sec:inf_measures} for various families of operators. 

\subsection{The Squared-Exponential \texorpdfstring{$T$}{} kernel}
% \subsection{The Squared-Exponential $T$ Kernel}
Motivated by the scaling discussions in Section \ref{sec:scaling} we define a kernel that acts directly on a Hilbert space. 

\begin{definition}\label{def:SE_T}
	For $T\colon\calX\rightarrow\calY$ the squared-exponential $T$ kernel (SE-$T$) is defined as 
	\begin{align*}
		k_{T}(x,y) = e^{-\frac{1}{2}\norm{T(x)-T(y)}_{\calY}^{2}}.
	\end{align*}
\end{definition}

We use the name squared-exponential instead of Gauss because the SE-$T$ kernel is not always the Fourier transform of a Gaussian distribution whereas the Gauss kernel on $\bbR^{d}$ is, which is a key distinction and is relevant for our proofs. Lemma \ref{lem:pos_def} in the Appendix assures us this function is a kernel. This definition allows us to adapt results about the Gauss kernel on $\bbR^{d}$ to the SE-$T$ kernel since it is the natural infinite dimensional generalisation. For example the following theorem characterises the RKHS of the SE-$T$ kernel for a certain choice of $T$, as was done in the finite dimensional case in \citet{Minh2009}. Before we state the result we introduce the infinite dimensional generalisation of a multi-index, define $\Gamma$ to be the set of summable sequences indexed by $\bbN$ taking values in $\bbN\cup\{0\}$ and for $\gamma\in\Gamma$ set $\lvert\gamma\rvert = \sum_{n=1}^{\infty}\gamma_{n}$, so $\gamma\in\Gamma$ if and only if $\gamma_{n}=0$ for all but finitely many $n\in\bbN$ meaning $\Gamma$ is a countable set. We set $\Gamma_{n} = \{\gamma\in\Gamma\colon\lvert\gamma\rvert = n\}$ and the notation $\sum_{\lvert\gamma\rvert \geq 0}$ shall mean $\sum_{n=0}^{\infty}\sum_{\gamma\in\Gamma_{n}}$ which is a countable sum. 

\begin{theorem}\label{thm:gauss_rkhs}
	Let $T\in L^{+}(\calX)$ be of the form $Tx = \sum_{n=1}^{\infty}\lambda_{n}^{1/2}\langle x,e_{n}\rangle_{\calX} e_{n}$ with convergence in $\calX$ for some orthonormal basis $\{e_{n}\}_{n=1}^{\infty}$ and bounded positive coefficients $\{\lambda_{n}\}_{n=1}^{\infty}$ then the RKHS of the SE-$T$ kernel is
	\begin{align*}
		\calH_{k_{T}}(\calX)  = \left\{ F(x) = e^{-\frac{1}{2}\norm{Tx}_{\calX}^{2}}\sum_{\lvert\gamma\rvert\geq 0}w_{\gamma}x^{\gamma}\colon \sum_{\lvert\gamma\rvert \geq 0}\frac{\gamma !}{\lambda^{\gamma}}w_{\gamma}^{2} < \infty \right\}, 
	\end{align*}
	where  $x^{\gamma} = \prod_{n=1}^{\infty}x_{n}^{\gamma_{n}}$, $x_{n}=\langle x,e_{n}\rangle_{\calX}$,  $\lambda^{\gamma} = \prod_{n=1}^{\infty}\lambda_{n}^{\gamma_{n}}$ and $\gamma ! = \prod_{n=1}^{\infty}\gamma_{n}!$ and $\calH_{k_{T}}(\calX)$ is equipped with the inner product $\langle F,G\rangle_{\calH_{k_{T}}(\calX)} = \sum_{\lvert\gamma\rvert \geq 0}\frac{\gamma !}{\lambda^{\gamma}}w_{\gamma}v_{\gamma}$ where $F(x) = e^{-\frac{1}{2}\norm{Tx}_{\calX}^{2}}\sum_{\lvert\gamma\rvert \geq 0}w_{\gamma}x^{\gamma}$, $G(x) = e^{-\frac{1}{2}\norm{Tx}_{\calX}^{2}}\sum_{\lvert\gamma\rvert \geq 0}v_{\gamma}x^{\gamma}$.
\end{theorem}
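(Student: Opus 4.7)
The plan is to mimic the finite dimensional argument of \citet{Minh2009} by exhibiting a concrete feature map for $k_{T}$ into $\ell^{2}(\Gamma)$ and then identifying the RKHS as the image of $\ell^{2}(\Gamma)$ under the dual feature map. I would first write
\begin{align*}
    k_{T}(x,y) = e^{-\tfrac{1}{2}\norm{Tx}_{\calX}^{2}}\+e^{-\tfrac{1}{2}\norm{Ty}_{\calX}^{2}}\+e^{\langle Tx,Ty\rangle_{\calX}},
\end{align*}
use $\langle Tx,Ty\rangle_{\calX}=\sum_{n}\lambda_{n}x_{n}y_{n}$, and apply the scalar Taylor series for $e^{\cdot}$ together with the multinomial theorem to obtain
\begin{align*}
    k_{T}(x,y) = e^{-\tfrac{1}{2}\norm{Tx}_{\calX}^{2}}\+e^{-\tfrac{1}{2}\norm{Ty}_{\calX}^{2}}\sum_{\lvert\gamma\rvert\geq 0}\frac{\lambda^{\gamma}}{\gamma!}x^{\gamma}y^{\gamma}.
\end{align*}
This exhibits $k_{T}(x,y)=\langle\Phi(x),\Phi(y)\rangle_{\ell^{2}(\Gamma)}$ for the feature map $\Phi(x)_{\gamma}=e^{-\tfrac{1}{2}\norm{Tx}_{\calX}^{2}}\sqrt{\lambda^{\gamma}/\gamma!}\,x^{\gamma}$. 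Repacking the sum by $\lvert\gamma\rvert=n$ and applying the multinomial theorem in reverse gives $\sum_{\gamma}\lambda^{\gamma}(x^{\gamma})^{2}/\gamma!=e^{\norm{Tx}_{\calX}^{2}}$, so $\norm{\Phi(x)}_{\ell^{2}(\Gamma)}^{2}=1=k_{T}(x,x)$ and the feature map is well-defined.

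Next I would invoke the standard feature map characterisation: the RKHS of $k_{T}$ consists of all functions $F(x)=\langle\mathbf{v},\Phi(x)\rangle_{\ell^{2}(\Gamma)}$ with $\mathbf{v}\in\ell^{2}(\Gamma)$, equipped with the norm $\inf\{\norm{\mathbf{v}}_{\ell^{2}(\Gamma)}\colon F(\cdot)=\langle\mathbf{v},\Phi(\cdot)\rangle\}$. The reparameterisation $w_{\gamma}=v_{\gamma}\sqrt{\lambda^{\gamma}/\gamma!}$ converts $\sum_{\gamma}v_{\gamma}^{2}$ into $\sum_{\gamma}(\gamma!/\lambda^{\gamma})w_{\gamma}^{2}$, writes $F(x)=e^{-\tfrac{1}{2}\norm{Tx}_{\calX}^{2}}\sum_{\gamma}w_{\gamma}x^{\gamma}$, and yields exactly the space, inner product and summability condition stated in the theorem; the reproducing property then follows directly from the feature map identification. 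To see that the infimum is uniquely attained, so that the coefficients $w_{\gamma}$, and hence the inner product, are unambiguously defined, I would show the features $\{\Phi(x)\}_{x\in\calX}$ are total in $\ell^{2}(\Gamma)$: if $\sum_{\gamma}w_{\gamma}x^{\gamma}=0$ for every $x\in\calX$, restricting to $\text{span}\{e_{1},\ldots,e_{N}\}$ yields a convergent power series in $N$ real variables that vanishes on an open subset of $\bbR^{N}$, forcing all $w_{\gamma}$ supported on $\{1,\ldots,N\}$ to vanish; letting $N\to\infty$ disposes of every multi-index.

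The main technical obstacle is justifying the rearrangements used in the Taylor/multinomial step and ensuring absolute convergence of the resulting double series indexed by $\Gamma$. Since $T$ is bounded, Cauchy-Schwarz yields $\sum_{n}\lambda_{n}\lvert x_{n}y_{n}\rvert\leq\norm{Tx}_{\calX}\norm{Ty}_{\calX}<\infty$, so the scalar exponential series converges absolutely and the countable-variable version of the multinomial theorem legitimises its rearrangement into a sum over $\Gamma$. The same bound controls convergence of $\Phi(x)$ in $\ell^{2}(\Gamma)$ and of the series defining $F(x)$ pointwise after reparameterisation; beyond these convergence checks the argument is essentially formal and reduces to the finite dimensional template, which is the only place the infinite dimensional setting genuinely differs.
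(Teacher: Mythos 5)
Your proposal is correct and follows essentially the same route as the paper: both hinge on the multinomial expansion $k_{T}(x,y)=\sum_{\lvert\gamma\rvert\geq 0}\phi_{\gamma}(x)\phi_{\gamma}(y)$ with $\phi_{\gamma}(x)=\sqrt{\lambda^{\gamma}/\gamma!}\,e^{-\frac{1}{2}\norm{Tx}_{\calX}^{2}}x^{\gamma}$ and the identification of the candidate space with a weighted $\ell^{2}(\Gamma)$. The only difference is packaging — you invoke the feature-map characterisation of the RKHS where the paper directly verifies the reproducing property and appeals to uniqueness of the reproducing kernel — and your explicit totality argument for $\{\Phi(x)\}_{x\in\calX}$ makes precise a well-definedness point the paper leaves implicit.
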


\begin{remark}
In the proof of Theorem \ref{thm:gauss_rkhs} an orthonormal basis of $\calH_{k_{T}}(\calX)$ is given which resembles the infinite dimensional Hermite polynomials which are used throughout infinite dimensional analysis and probability theory, for example see \citet[Chapter 10]{DaPrato2002} and \citet[Chapter 2]{Nourdin2009}. In particular they are used to define Sobolev spaces for functions over a real, separable Hilbert space \citep[Theorem 9.2.12]{DaPrato2002} which raises the interesting and, as far as we are aware, open question of how $\calH_{k_{T}}(\calX)$ relates to such Sobolev spaces for different choices of $T$. 
\end{remark}

For the two-sample test to be valid we need the kernel to be characteristic meaning the mean-embedding is injective over $\calP$, so the test can tell the difference between any two probability measures. To understand the problem better we again leverage results regarding the Gauss kernel on $\bbR^{d}$, in particular the proof in \citet[Theorem 9]{Sriperumbudur2010} that the Gauss kernel on $\bbR^{d}$ is characteristic. This uses the fact that the Gauss kernel on $\bbR^{d}$ is the Fourier transform of a Gaussian distribution on $\bbR^{d}$ whose full support implies the kernel is characteristic. By choosing $T$ such that the SE-$T$ kernel is the Fourier transform of a Gaussian measure on $\calX$ that has full support we can use the same argument.

\begin{theorem}\label{thm:trace_class_char}
	Let $T\in L^{+}_{1}(\calX)$ then the SE-$T$ kernel is characteristic if and only if $T$ is injective.
\end{theorem}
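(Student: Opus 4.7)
The strategy is to realise the SE-$T$ kernel as the Fourier transform of a Gaussian measure on $\calX$ and then mimic the finite-dimensional argument of \citet[Theorem 9]{Sriperumbudur2010}. Since $T$ is self-adjoint and linear, $k_{T}(x,y) = \exp(-\tfrac{1}{2}\norm{T(x-y)}_{\calX}^{2})$ is translation invariant on $\calX$. Because trace class operators form a two-sided ideal in $L(\calX)$, $T \in L_{1}^{+}(\calX)$ implies $T^{2} \in L_{1}^{+}(\calX)$, so by \citet[Theorem 1.12]{DaPrato2006}, as recalled in Section~\ref{sec:inf_measures}, there exists a Gaussian measure $\mu_{T} \coloneqq N_{0, T^{2}}$ on $\calX$ with Fourier transform $\widehat{\mu}_{T}(y) = \exp(-\tfrac{1}{2}\langle T^{2} y, y\rangle_{\calX}) = \exp(-\tfrac{1}{2}\norm{Ty}_{\calX}^{2})$. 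Hence $k_{T}(x,y) = \widehat{\mu}_{T}(x-y) = \int_{\calX} e^{i\langle \omega, x-y\rangle_{\calX}} \, d\mu_{T}(\omega)$, exhibiting $\mu_{T}$ as a spectral measure for $k_{T}$.

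For the ``only if'' direction I argue by contrapositive. If $T$ is not injective, pick $x_{0} \neq 0$ with $T x_{0} = 0$ and take $P = \delta_{0}$, $Q = \delta_{x_{0}}$. Then for every $x \in \calX$, $k_{T}(x, x_{0}) = \exp(-\tfrac{1}{2}\norm{T(x - x_{0})}_{\calX}^{2}) = \exp(-\tfrac{1}{2}\norm{Tx}_{\calX}^{2}) = k_{T}(x, 0)$, so the mean embeddings $\Phi_{k_{T}} P$ and $\Phi_{k_{T}} Q$ coincide in $\calH_{k_{T}}(\calX)$ while $P \neq Q$, and $k_{T}$ fails to be characteristic.

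For the ``if'' direction, assume $T$ is injective. Then $T^{2}$ is injective too: $T^{2} x = 0$ implies $\norm{Tx}_{\calX}^{2} = \langle T^{2} x, x\rangle_{\calX} = 0$, hence $Tx = 0$ and so $x = 0$. By \citet[Proposition 1.25]{DaPrato2006} the Gaussian measure $\mu_{T}$ is therefore non-degenerate and has full support on $\calX$. For any $P, Q \in \calP$ with $\MMD_{k_{T}}(P,Q) = 0$, substituting the spectral representation of $k_{T}$ into \eqref{eq:MMD_integral_version} and swapping the order of integration (justified because $\mu_{T}$ and the signed measure $P - Q$ are finite and the integrand has modulus bounded by one) gives
\begin{align*}
    0 \;=\; \MMD_{k_{T}}(P,Q)^{2} \;=\; \int_{\calX} \bigl\lvert \widehat{P}(\omega) - \widehat{Q}(\omega) \bigr\rvert^{2} \, d\mu_{T}(\omega),
\end{align*}
where $\widehat{P}(\omega) \coloneqq \int_{\calX} e^{i\langle \omega, x\rangle_{\calX}} \, dP(x)$ is the characteristic functional of $P$.

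Consequently $\widehat{P} = \widehat{Q}$ on a set of full $\mu_{T}$-measure. Since characteristic functionals of finite Borel measures on $\calX$ are continuous, full support of $\mu_{T}$ upgrades this to $\widehat{P} = \widehat{Q}$ everywhere, and uniqueness of the Fourier transform for probability measures on a real, separable Hilbert space then yields $P = Q$. The principal obstacle is the spectral representation: on $\bbR^{d}$ one appeals to Bochner's theorem directly, but on an infinite-dimensional Hilbert space there is no unqualified Bochner theorem, so the representing measure must be exhibited explicitly. The trace class hypothesis on $T$ is precisely what delivers this, guaranteeing that $T^{2}$ is trace class and hence that $N_{0, T^{2}}$ lives on $\calX$ itself rather than on some larger enlargement. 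The remaining continuity-from-full-support and Fourier uniqueness steps are standard results in Hilbert-space probability.
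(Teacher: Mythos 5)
Your proof is correct and follows essentially the same route as the paper's: realise $k_{T}$ as the Fourier transform of the Gaussian measure $N_{0,T^{2}}$, apply Fubini to write $\MMD_{k_{T}}(P,Q)^{2}=\int_{\calX}\lvert\widehat{P}-\widehat{Q}\rvert^{2}\,dN_{0,T^{2}}$, use full support of $N_{0,T^{2}}$ (via injectivity of $T^{2}$ and \citet[Proposition 1.25]{DaPrato2006}) together with continuity and injectivity of the Fourier transform, and handle the converse by the same Dirac-measure contrapositive. The only difference is that you spell out a few steps the paper leaves implicit (that $T^{2}$ is trace class and injective), which is fine.
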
 

This is dissatisfyingly limiting since $T\in L^{+}_{1}(\calX)$ is a restrictive assumption, for example it does not include $T=I$ the identity operator. We shall employ a limit argument to reduce the requirements on $T$. To this end we define admissible maps.

\begin{definition}
A map $T\colon\calX\rightarrow\calY$ is called admissible if it is Borel measurable, continuous and injective.
\end{definition}

The next result provides a broad family of kernels which are characteristic. It applies for $\calX$ being more general than a real, separable Hilbert space. A Polish space is a separable, completely metrizable topological space. Multiple examples of admissible $T$ are given in Section \ref{sec:implementation} and are examined numerically in Section \ref{sec:numerics}.

\begin{theorem}\label{thm:admissible_char}
	Let $\calX$ be a Polish space, $\calY$ a real, separable Hilbert space and $T$ an admissible map then the SE-$T$ kernel is characteristic.
\end{theorem}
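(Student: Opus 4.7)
The argument separates into a reduction that transports the problem to the Hilbert space $\calY$ via the admissible map $T$, followed by a limiting argument that extends Theorem \ref{thm:trace_class_char} from trace-class operators to the identity on $\calY$.

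First, a change of variables $u = T(x)$ in \eqref{eq:MMD_integral_version} yields
\begin{align*}
\MMD_{k_T}(P,Q) = \MMD_{k_{\text{Id}_\calY}}(T_*P,\, T_*Q),
\end{align*}
where $k_{\text{Id}_\calY}(u,v) = e^{-\frac{1}{2}\|u-v\|_\calY^2}$ is the SE-$T$ kernel on $\calY$ with $T$ the identity. Admissibility of $T$, combined with the Lusin--Suslin theorem, ensures that $T$ is a Borel isomorphism from the Polish space $\calX$ onto its Borel image $T(\calX)\subset\calY$, so that $T_*P = T_*Q$ if and only if $P = Q$. This reduces the claim to establishing characteristicness of $k_{\text{Id}_\calY}$ on the Borel probability measures of $\calY$.

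For this remaining step Theorem \ref{thm:trace_class_char} does not apply directly since $\text{Id}_\calY \notin L_1^+(\calY)$ in the infinite-dimensional setting, which is precisely where the promised limit argument enters. I would fix an orthonormal basis $\{f_i\}$ of $\calY$ and approximate $\text{Id}_\calY$ strongly by a sequence of injective trace-class positive operators $A_n \in L_1^+(\calY)$ (for instance with eigenvalues equal to $1$ on $\{f_1,\ldots,f_n\}$ and positive summable beyond). Each $k_{A_n}$ is then characteristic by Theorem \ref{thm:trace_class_char} and admits the Bochner representation $k_{A_n}(u,v) = \int_\calY e^{i\langle \omega,\,u-v\rangle}\,dN_{0,A_n^2}(\omega)$ via the Gaussian measure on $\calY$ with covariance $A_n^2$. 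I would then convert vanishing of $\MMD_{k_{\text{Id}_\calY}}(\mu,\nu)$ into moment conditions on $\mu-\nu$ via the identity $e^{-\frac{1}{2}\|u-v\|^2} = e^{-\frac{1}{2}\|u\|^2}e^{-\frac{1}{2}\|v\|^2}\sum_{k\geq 0}\frac{\langle u,v\rangle^k}{k!}$ coupled with the multinomial expansion in the orthonormal basis. Interchanging sum and integral by dominated convergence forces every Gaussian-weighted polynomial moment $\int u^\gamma e^{-\frac{1}{2}\|u\|^2}\,d(\mu-\nu)(u)$, indexed by $\gamma \in \Gamma$, to vanish. Finite-dimensional moment determinacy (Carleman's criterion) applied to the cylindrical push-forwards of the signed measure $e^{-\frac{1}{2}\|u\|^2}(\mu-\nu)$, which inherit super-Gaussian tails, then forces $\mu-\nu$ to vanish on each cylinder set, and hence on all Borel sets by a $\pi$--$\lambda$ argument.

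The main obstacle is precisely this last step. The pointwise convergence $k_{A_n} \to k_{\text{Id}_\calY}$ alone does not preserve characteristicness, since the individual quantities $\MMD_{k_{A_n}}(\mu,\nu)$ need not vanish even when their limit does, and the induced mean-embedding maps are not comparable monotonically in $n$. Circumventing this requires extracting from the single equation $\MMD_{k_{\text{Id}_\calY}}(\mu,\nu)=0$ enough moment information on $\mu-\nu$ to trigger finite-dimensional determinacy, and it is the Gaussian weight intrinsic to the SE kernel that makes these moments finite and tractable in the infinite-dimensional Hilbert setting.
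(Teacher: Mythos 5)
Your proof is correct, and while the reduction step coincides with the paper's, the core lemma is established by a genuinely different argument. The first half --- rewriting $\MMD_{k_T}(P,Q)$ as $\MMD_{k_{I}}(T_{\#}P,T_{\#}Q)$ via \eqref{eq:MMD_integral_version} and invoking Lusin--Suslin so that $T_{\#}P=T_{\#}Q$ forces $P=Q$ --- is exactly what the paper does. For the remaining claim that the SE-$I$ kernel is characteristic on a separable Hilbert space, the paper argues via the Minlos--Sazanov theorem: it picks $x^{*}$ with $\lvert\widehat{P}(x^{*})-\widehat{Q}(x^{*})\rvert>\varepsilon$, uses the trace-class continuity of the Fourier transforms to find a $\norm{\cdot}_{V}$-ball on which $\lvert\widehat{P}-\widehat{Q}\rvert>\varepsilon/2$, writes $\MMD_{k_I}^{2}$ as $\lim_{m}\int\lvert\widehat{P}-\widehat{Q}\rvert^{2}dN_{I_m}$ for trace-class approximants $I_m$ of the identity, and then proves a lower bound on $N_{I_m}$ of that ball that is uniform in $m$; this is precisely how it evades the obstruction you correctly identify (pointwise convergence $k_{I_m}\to k_I$ does not transfer characteristicness). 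You instead expand $e^{-\frac{1}{2}\norm{u-v}^{2}}=e^{-\frac{1}{2}\norm{u}^{2}}e^{-\frac{1}{2}\norm{v}^{2}}\sum_{\gamma}\frac{1}{\gamma!}u^{\gamma}v^{\gamma}$ (the same feature system as in Theorem \ref{thm:gauss_rkhs}), so that $\MMD^{2}=0$ forces every Gaussian-weighted moment of $\mu-\nu$ to vanish, and conclude by moment determinacy of the cylindrical push-forwards; the interchange of sum and integral is licensed since the partial sums are dominated by $e^{-\frac{1}{2}(\norm{u}-\norm{v})^{2}}\leq 1$, and determinacy is genuinely available here (cleanest via analyticity of the characteristic functions of the weighted finite-dimensional marginals, which the Gaussian damping guarantees; for a signed measure Carleman alone should be applied to both parts of the Hahn decomposition). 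Note that the $A_n$ approximation in your write-up is ultimately unused and could be deleted. The trade-off: your moment argument is more self-contained (no Minlos--Sazanov, no small-ball estimates for Gaussian measures) and ties directly to the explicit RKHS of Theorem \ref{thm:gauss_rkhs}, but it exploits the specific factorisation of the squared exponential and does not transfer to other radial kernels, whereas the paper's spectral limiting machinery is reused essentially verbatim to prove Corollary \ref{cor:IMQ_characteristic} for the IMQ-$T$ kernel.
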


Theorem \ref{thm:admissible_char} generalises Theorem \ref{thm:trace_class_char}. A critical result used in the proof is the Minlos-Sazanov theorem, detailed as Theorem \ref{thm:minlos-sazanov} in the Appendix, which is an infinite dimensional version of Bochner's theorem. The result allows us to identify spectral properties of the SE-$T$ kernel which are used to deduce characteristicness. 

\subsection{Integral Kernel Formulation}\label{subsec:integral_kernel}
Let $k_{0}\colon\bbR\times\bbR\rightarrow\bbR$ be a kernel, $C\in L^{+}_{1}(\calX)$ and $N_{C}$ the corresponding mean zero Gaussian measure on $\calX$ and define $k_{C,k_{0}}\colon\calX\times\calX\rightarrow\bbR$ as follows
\begin{align*}
	k_{C,k_{0}}(x,y)\coloneqq \int_{\calX}k_{0}\left(\langle x,h\rangle_{\calX},\langle y,h\rangle_{\calX}\right)dN_{C}(h).
\end{align*} 
Consider the particular case where $k_0(s,t) = \langle \Phi(s), \Phi(t)\rangle_{\mathcal{F}}$, where $\Phi:\mathbb{R} \rightarrow \mathcal{F}$ is a continuous feature map, mapping into a Hilbert space $(\mathcal{F}, \langle\cdot,\cdot\rangle_{\mathcal{F}})$, which will typically be $\mathbb{R}^F$ for some $F\in\bbN$. In this case the functions $x\rightarrow \Phi(\langle x, h\rangle_{\calX})$ can be viewed as $\mathcal{F}$--valued random features for each $h\in\calX$ randomly sampled from $N_{C}$, and $k_{C, k_0}$ is very similar to the random feature kernels considered in \citet{nelsen2020random} and \citet{bach2017equivalence}.  Following these previous works, we may completely characterise the RKHS of this kernel, the result involves $L^{2}_{N_{C}}(\calX;\calF)$ which is the space of equivalence classes of functions from $\calX$ to $\calF$ that are square integrable in the $\calF$ norm with respect to $N_{C}$ and $L^{2}(\calX)\coloneqq L^{2}(\calX;\calF)$. 

\begin{proposition}
    Suppose that $\psi(x,h) = \Phi(\langle x, h\rangle_{\calX})$ satisfies $\psi \in L^2_{N_{C}\times N_{C}}(\calX \times \calX; \mathcal{F})$ then
    the RKHS defined by the kernel $k_{C,k_0}$ is given by
    $$
       \calH_{k_{C, k_0}}(\calX) =  \left\lbrace \int \langle v(h), \psi(\cdot,h)\rangle_{\mathcal{F}}dN_C(h)\colon v \in L^2_{N_C}(\calX; \mathcal{F})\right\rbrace \subset L^2_{N_C}(\calX).
    $$
\end{proposition}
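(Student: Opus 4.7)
The plan is to realise the proposed set as the image of a linear operator $S$ from $L^2_{N_C}(\calX;\calF)$ into $\mathbb{R}^{\calX}$ and verify directly that this image, equipped with a suitable quotient inner product, is a Hilbert space whose reproducing kernel is $k_{C,k_0}$. Uniqueness of the RKHS then identifies it with $\calH_{k_{C,k_0}}(\calX)$.

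Concretely, I would define $S\colon L^2_{N_C}(\calX;\calF) \to \mathbb{R}^{\calX}$ by $(Sv)(x) = \int \langle v(h), \psi(x,h)\rangle_{\calF}\,dN_C(h)$, which is pointwise well-defined whenever $\psi(x,\cdot) \in L^2_{N_C}(\calX;\calF)$. The kernel then factorises as $k_{C,k_0}(y,x) = \langle \psi(x,\cdot), \psi(y,\cdot)\rangle_{L^2_{N_C}(\calX;\calF)} = (S\psi(x,\cdot))(y)$, so that $k_{C,k_0}(\cdot,x) = S\psi(x,\cdot)$ already lies in the range $\mathcal{G} := \mathrm{Ran}\,S$, which is precisely the set appearing in the statement.

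Next I would endow $\mathcal{G}$ with the inner product $\langle Sv, Sw\rangle_{\mathcal{G}} := \langle Pv, Pw\rangle_{L^2_{N_C}}$, where $P$ denotes orthogonal projection onto $(\ker S)^{\perp}$; equivalently, $S|_{(\ker S)^{\perp}}$ is an isometry onto $\mathcal{G}$, which makes $\mathcal{G}$ a Hilbert space of functions on $\calX$. The decisive observation is that each $\psi(x,\cdot)$ lies in $(\ker S)^{\perp}$: if $Sw = 0$, then $\langle w, \psi(x,\cdot)\rangle_{L^2_{N_C}} = (Sw)(x) = 0$ for every $x$. Consequently, for any $f = Sv \in \mathcal{G}$,
\[
    \langle f, k_{C,k_0}(\cdot, x)\rangle_{\mathcal{G}} = \langle Pv, \psi(x,\cdot)\rangle_{L^2_{N_C}} = \langle v, \psi(x,\cdot)\rangle_{L^2_{N_C}} = f(x),
\]
which is the reproducing property. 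By uniqueness of the RKHS associated with a positive definite kernel, $\mathcal{G} = \calH_{k_{C,k_0}}(\calX)$.

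The main technical obstacle is ensuring that $\mathcal{G}$ with the quotient inner product is genuinely a Hilbert space, which reduces to showing that $\ker S$ is closed in $L^2_{N_C}(\calX;\calF)$. This follows from the Cauchy--Schwarz estimate $|(Sv)(x)| \leq \|v\|_{L^2_{N_C}} \|\psi(x,\cdot)\|_{L^2_{N_C}}$, which shows that each point evaluation $v \mapsto (Sv)(x)$ is a continuous linear functional on $L^2_{N_C}(\calX;\calF)$ and hence has closed kernel; intersecting over all $x$ gives closedness of $\ker S$. A secondary concern is that the assumption $\psi \in L^2_{N_C\times N_C}(\calX \times \calX; \calF)$ only guarantees $\psi(x,\cdot) \in L^2_{N_C}(\calX;\calF)$ for $N_C$-almost every $x$, so strictly speaking the identification of $\mathcal{G}$ with $\calH_{k_{C,k_0}}(\calX)$ should be read as holding for all $x$ at which $\psi(x,\cdot)$ is square-integrable, or under a mild additional regularity assumption on $\Phi$ ensuring this holds everywhere.
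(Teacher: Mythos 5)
Your argument is correct and is essentially the standard feature-space characterisation of an RKHS (the kernel factorises as $k_{C,k_0}(x,y)=\langle\psi(x,\cdot),\psi(y,\cdot)\rangle_{L^2_{N_C}}$, and the RKHS is the range of the adjoint map equipped with the quotient norm), which is exactly the route taken in the real-valued case of \citet{nelsen2020random} to which the paper defers without giving its own proof. Your closing caveat --- that $\psi\in L^2_{N_C\times N_C}$ only guarantees $\psi(x,\cdot)\in L^2_{N_C}$ for $N_C$-almost every $x$, so pointwise well-definedness of $S$ and of the kernel itself needs the continuity of $\Phi$ already assumed in the surrounding text --- is a legitimate point of care that the paper glosses over.
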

The proof of this result is an immediate generalization of the real-valued case given in \citet{nelsen2020random}. Using the spectral representation of translation invariant kernels we can provide conditions for $k_{C, k_0}$ to be a characteristic kernel.

\begin{proposition}\label{prop:integral_kernel}
	If $k_{0}$ is a kernel over $\bbR\times\bbR$ then $k_{C,k_{0}}$ is a kernel over $\calX\times\calX$. If $C$ is injective and $k_{0}$ is also continuous and translation invariant with spectral measure $\mu$ such that there exists an interval $(a,b)\subset\bbR$ with $\mu(U) > 0$ for every open subset $U\subset(a,b)$ then $k_{C,k_{0}}$ is characteristic. 
\end{proposition}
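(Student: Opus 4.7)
The plan is to display $k_{C,k_0}$ as an integral mixture of SE-$T$ kernels parameterised by a scalar $\omega$, so that characteristicness can be inherited from a single well-chosen member of the family via Theorem \ref{thm:admissible_char}.

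The first claim that $k_{C,k_0}$ is a kernel is essentially free: symmetry follows from symmetry of $k_0$, and positive definiteness follows by pulling $\sum_{i,j}a_i a_j$ inside the integral over $N_C$ and noting that the integrand is non-negative for each fixed $h$ since $k_0$ is positive definite.

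For characteristicness, first I would invoke Bochner's theorem to write $k_0(s,t)=\int_\bbR e^{i\omega(s-t)}d\mu(\omega)$, substitute into the definition of $k_{C,k_0}$, and swap the order of integration (Fubini is justified since $\mu$ and $N_C$ are finite and the integrand has modulus $1$). The inner integral is the Fourier transform of $N_C$ evaluated at $\omega(x-y)$, which equals $\exp(-\tfrac{1}{2}\omega^2\langle C(x-y),x-y\rangle_\calX)$. Recognising $\omega^2\langle C(x-y),x-y\rangle_\calX=\|\omega C^{1/2}(x-y)\|_\calX^2$ gives
\begin{equation*}
    k_{C,k_0}(x,y)=\int_\bbR k_{T_\omega}(x,y)\,d\mu(\omega),\qquad T_\omega\coloneqq \omega C^{1/2}.
\end{equation*}
Applying Fubini a second time inside the double-integral representation \eqref{eq:MMD_integral_version} yields
\begin{equation*}
    \MMD_{k_{C,k_0}}(P,Q)^2=\int_\bbR \MMD_{k_{T_\omega}}(P,Q)^2\,d\mu(\omega).
\end{equation*}

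Next I would argue that any $P,Q$ with $\MMD_{k_{C,k_0}}(P,Q)=0$ must coincide. Non-negativity of the integrand forces $\MMD_{k_{T_\omega}}(P,Q)=0$ for $\mu$-a.e.\ $\omega$. The set $(a,b)\setminus\{0\}$ is a non-empty open subset of $(a,b)$ (being $(a,b)$ itself, or else the union of two open intervals when $0\in(a,b)$), so by the positive-measure hypothesis it has strictly positive $\mu$-measure, and therefore must contain some $\omega_0\neq 0$ with $\MMD_{k_{T_{\omega_0}}}(P,Q)=0$. Because $C$ is injective and trace class, $C^{1/2}$ is a bounded injective linear operator on $\calX$, and so $T_{\omega_0}=\omega_0 C^{1/2}$ is continuous, Borel measurable, and injective, i.e.\ admissible in the sense of the definition preceding Theorem \ref{thm:admissible_char}. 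That theorem then gives $P=Q$, completing the argument.

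The main obstacle is really the second step: justifying the two applications of Fubini and, more delicately, using the positive-measure hypothesis on $(a,b)$ to extract a \emph{nonzero} $\omega_0$ in the vanishing set (a subtlety that matters because $T_0\equiv 0$ yields the constant kernel, which is decidedly not characteristic). Everything else is a direct reduction to Theorem \ref{thm:admissible_char}.
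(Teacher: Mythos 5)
Your proposal is correct, but it reaches the conclusion by a genuinely different route from the paper's own proof of this proposition. The paper keeps everything at the level of measures on $\calX$: after the same Bochner expansion it identifies $k_{C,k_{0}}(x,y)=\hat{\nu}(x-y)$ where $\nu$ is the law of $\xi X$ with $\xi\sim\mu$ and $X\sim N_{C}$ independent, and then proves directly that $\nu$ has full support on $\calX$ via an explicit ball-scaling argument (showing $cB'\subset B$ for $c$ in a small subinterval of $(a,b)$ and a suitably shrunken ball $B'$, so that $\nu(B)\geq\mu\big((a,a(1+\delta))\big)N_{C}(B')>0$ using non-degeneracy of $N_{C}$); characteristicness then follows from the Fourier-injectivity argument of Theorem \ref{thm:trace_class_char}. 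You instead collapse the $N_{C}$-integral first, writing $k_{C,k_{0}}=\int_{\bbR}k_{T_{\omega}}\,d\mu(\omega)$ with $T_{\omega}=\omega C^{1/2}$, push this through $\MMD^{2}$, extract a nonzero $\omega_{0}$ in the vanishing set, and invoke Theorem \ref{thm:admissible_char} for the single admissible map $T_{\omega_{0}}$ --- which is essentially the strategy the paper itself deploys later for Corollary \ref{cor:IMQ_characteristic}, not for this proposition. Your version is shorter and cleanly modular, and you correctly isolate the one genuine subtlety (ruling out $\omega_{0}=0$, since $T_{0}$ gives the constant kernel); the trade-off is that it leans on the heavier Theorem \ref{thm:admissible_char}, whose proof runs through Minlos--Sazanov, whereas the paper's argument for this proposition is self-contained modulo the elementary Fourier-injectivity step. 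Both uses of Fubini are legitimate since the integrands are bounded by $1$ and all measures involved are finite, and measurability (indeed continuity, by dominated convergence) of $\omega\mapsto\MMD_{k_{T_{\omega}}}(P,Q)^{2}$ makes the ``zero $\mu$-a.e.'' step sound.
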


For certain choices of $T$ the SE-$T$ kernel falls into a family of {integral kernels}.  Indeed, if $k_{0}(x,y) = \cos(x-y)$ then $k_{C,k_{0}}$ is the SE-$C^{\frac{1}{2}}$ kernel
\begin{align*}
	k_{C,k_{0}}(x,y)   = \widehat{N}_{C}(x-y)&= e^{-\frac{1}{2}\norm{x-y}_{C}^{2}}  = e^{-\frac{1}{2}\sum_{n=1}^{\infty}\lambda_{n}(x_{n}-y_{n})^{2}},
\end{align*} where $\norm{x-y}_{C}^{2} = \langle C(x-y),x-y\rangle_{\calX}$, $\{\lambda_{n}\}_{n=1}^{\infty}$ are the eigenvalues of $C$ and $x_{n} = \langle x,e_{n}\rangle$ are the coefficients with respect to the eigenfunction basis $\{e_{n}\}_{n=1}^{\infty}$ of $C$.

Secondly, let $\gamma >0$ and assume $C$ is non-degenerate and set $k_{0}$ to be the complex exponential of $\gamma$ multiplied the by white noise mapping associated with $C$, see \citet[Section 1.2.4]{DaPrato2006}, then $k_{C,k_{0}}$ is the SE-$\gamma I$ kernel
\begin{align}
	k_{C,k_{0}}(x,y) = k_{\gamma I}(x,y) =  e^{-\frac{\gamma}{2}\norm{x-y}_{\calX}^{2}},\label{eq:id_kernel}
\end{align}
Note that $k_{\gamma I}$ is not the Fourier transform of any Gaussian measure on $\calX$ \citep[Proposition 1.2.11]{Maniglia2004} which shows how the integral kernel framework is more general than only using the Fourier transform of Gaussian measures to obtain kernels, as was done in Theorem \ref{thm:trace_class_char}.

The integral framework can yield non-SE type kernels. Let $N_{1}$ be the measure associated with the Gaussian distribution $\calN(0,1)$ on $\bbR$, $C$ be non-degenerate and $k_{0}(x,y) = \widehat{N}_{1}(x-y)$ then we have
\begin{equation}\label{eq:IMQ_kernel}
\begin{aligned}
  k_{C,k_{0}}(x,y) &= \int_{\calX}\int_{\bbR}e^{iz\langle h,x-y\rangle}dN_{1}(z)dN_{C}(h) = \left({\norm{x-y}_{C}^{2}+1}\right)^{-\frac{1}{2}}. 
\end{aligned}
\end{equation}

\begin{definition}\label{def:IMQ_T}
	For $T\colon\calX\rightarrow\calY$ the inverse multi-quadric $T$ kernel (IMQ-$T$) is defined as 
		$k_{T}(x,y) = \left({\norm{T(x)-T(y)}_{\calY}^{2}+1}\right)^{-1/2}.$
\end{definition}

By using Proposition \ref{prop:integral_kernel} we immediately obtain that if $T\in L^{+}_{1}(\calX)$ and $T$ is non-degenerate then the IMQ-$T$ kernel is characteristic. But by the same limiting argument as Theorem \ref{thm:admissible_char} and the integral kernel formulation of IMQ-$T$ we obtain a more general result. 

\begin{corollary}\label{cor:IMQ_characteristic}
	Under the same conditions as Theorem \ref{thm:admissible_char} the IMQ-$T$ kernel is characteristic.
\end{corollary}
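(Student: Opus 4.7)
The strategy is to follow the proof of Theorem \ref{thm:admissible_char} step by step, substituting the IMQ-$T$ kernel for the SE-$T$ kernel and using the integral representation \eqref{eq:IMQ_kernel} in place of the Fourier-transform-of-a-Gaussian-measure property that underpinned the SE argument. The trace-class analogue of Theorem \ref{thm:trace_class_char} --- that IMQ-$T$ is characteristic when $T\in L^{+}_{1}(\calX)$ is injective --- is immediate from Proposition \ref{prop:integral_kernel} applied with $C=T^{2}\in L^{+}_{1}(\calX)$ (also injective, since $\|Tx\|^{2}=\langle T^{2}x,x\rangle$) and $k_{0}(s,t)=\widehat{N}_{1}(s-t)$: by \eqref{eq:IMQ_kernel} one has $k_{C,k_{0}}=k_{T}$, and the spectral measure of $k_{0}$, the standard Gaussian on $\bbR$, is positive on every open interval, so the hypothesis of Proposition \ref{prop:integral_kernel} holds.

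To pass from this base case to a general admissible $T\colon\calX\to\calY$, I would first reduce to characteristicness of the IMQ-$\Id$ kernel on $\calY$ itself and then apply pushforward, exactly as in Theorem \ref{thm:admissible_char}. Given $P,Q\in\calP(\calX)$ with $\Phi_{k_{T}}P=\Phi_{k_{T}}Q$, a change of variables rewrites this as
\begin{align*}
\Phi_{k_{\Id}}(T_{\#}P)(T(x))=\Phi_{k_{\Id}}(T_{\#}Q)(T(x)),\qquad x\in\calX,
\end{align*}
so the mean embeddings of the pushforwards $T_{\#}P$ and $T_{\#}Q$ under the IMQ-$\Id$ kernel on $\calY$ agree on $T(\calX)$. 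The Lusin-Souslin theorem guarantees that $T(\calX)$ is Borel, that $T$ admits a Borel-measurable inverse on its image, and that both pushforwards are supported on $T(\calX)$. Since the RKHS of a continuous bounded kernel consists of continuous functions, any element orthogonal to the closed span of $\{k_{\Id}(\cdot,z):z\in T(\calX)\}$ vanishes on $\overline{T(\calX)}$ and therefore integrates to zero against both pushforwards; decomposing $k_{\Id}(\cdot,w)$ for arbitrary $w\in\calY$ into its projection onto this span and its orthogonal complement then extends the mean-embedding equality from $T(\calX)$ to all of $\calY$. Characteristicness of IMQ-$\Id$ on $\calY$ forces $T_{\#}P=T_{\#}Q$, and injectivity of $T$ with its measurable inverse yields $P=Q$.

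The main obstacle, exactly as in Theorem \ref{thm:admissible_char}, is establishing characteristicness of IMQ-$\Id$ on $\calY$, since the identity operator is not trace class and Proposition \ref{prop:integral_kernel} does not apply directly. I would resolve this with the same limiting device used there: approximate $\Id$ by a sequence of injective trace-class operators $C_{n}\in L^{+}_{1}(\calY)$ with $C_{n}\to\Id$ in the strong operator topology, so that $k_{C_{n}}\to k_{\Id}$ pointwise while remaining uniformly bounded by $1$, and transfer mean-embedding equalities through the limit by dominated convergence. Each $k_{C_{n}}$ is characteristic by the base case, and it is the integral-kernel representation \eqref{eq:IMQ_kernel} that makes this limit viable, mirroring the role played by the Minlos-Sazanov spectral argument in the SE proof.
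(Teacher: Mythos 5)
Your base case (Proposition \ref{prop:integral_kernel} with $C=T^{2}$) and your final pushforward step are fine, if more elaborate than necessary: the paper simply notes $\MMD_{k_{T}}(P,Q)^{2}=\MMD_{k_{I}}(T_{\#}P,T_{\#}Q)^{2}$ from the double-integral formula and invokes the Lusin--Souslin theorem, which avoids your detour through extending the mean-embedding identity from $T(\calX)$ to all of $\calY$. The genuine gap is in the step you yourself identify as the main obstacle: characteristicness of IMQ-$\Id$. Approximating $\Id$ by injective trace-class operators $C_{n}$ and ``transferring mean-embedding equalities through the limit'' does not work. What bounded convergence gives you is $\MMD_{k_{C_{n}}}(P,Q)^{2}\rightarrow\MMD_{k_{\Id}}(P,Q)^{2}$; knowing each term of the sequence is strictly positive (because each $k_{C_{n}}$ is characteristic) says nothing about the positivity of the limit, and the reverse implication --- deducing $\Phi_{k_{C_{n}}}P=\Phi_{k_{C_{n}}}Q$ from $\Phi_{k_{\Id}}P=\Phi_{k_{\Id}}Q$ --- is simply not available, since the embeddings live in different RKHSs. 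This is exactly the difficulty the paper had to overcome for SE-$\Id$ in Theorem \ref{thm:identity_char}, where the resolution was not a naive limit but a Minlos--Sazanov argument producing a ball $B_{V}(x^{*},r)$ on which $\lvert\widehat{P}-\widehat{Q}\rvert$ is bounded below, together with a lower bound on $N_{I_{m}}(B_{V}(x^{*},r))$ that is \emph{uniform in} $m$; that uniformity is what keeps the limit away from zero.

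The paper's actual proof of the corollary sidesteps redoing that work by exploiting the Gaussian scale-mixture identity $(r^{2}+1)^{-1/2}=\int_{\bbR}e^{-z^{2}r^{2}/2}\,dN_{1}(z)$: after the (legitimate) $I_{m}$ limit used only to justify Fubini, one obtains
\begin{align*}
\MMD_{k_{I}^{\mathrm{IMQ}}}(P,Q)^{2}=\int_{\bbR}\MMD_{k_{z^{2}I}^{\mathrm{SE}}}(P,Q)^{2}\,dN_{1}(z),
\end{align*}
and the integrand is strictly positive for every $z\neq 0$ by the already-proved Theorem \ref{thm:identity_char}, so the integral against a measure with strictly positive density is strictly positive. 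To repair your argument you would either need to import this reduction to the SE-$\Id$ case, or reproduce the uniform lower bound from the Minlos--Sazanov construction for your sequence $C_{n}$; as written, the limiting step is a non sequitur.
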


\section{MMD on Function Spaces} \label{sec:inf_dim_MMD}

In Section \ref{sec:kernel} we derived kernels directly over function spaces that were characteristic, meaning that the MMD induced by them is a metric on $\calP(\calX)$. Therefore a two-sample test based on such kernels may be constructed, as detailed in Section \ref{sec:RKHS}, using the same form of U-statistic estimators and bootstrap technique as the finite dimensional scenario. This section will explore properties of the test. Subsection \ref{subsec:discrete_test} will investigate the effect of performing the test on reconstructions of the random function based on observed data. Subsection \ref{subsec:GP_embedding} will provide explicit calculations for MMD when $P,Q$ are Gaussian processes. Subsection \ref{subsec:weak_convergence} discusses the topology on $\calP(\calX)$ induced by MMD and how it relates to the weak topology. 

\subsection{Influence of Function Reconstruction on MMD Estimator}\label{subsec:discrete_test}

In practice, rather than having access to the full realisation of random functions the data available will be some finite-dimensional representation of the functions, for example through discretisation over a mesh, or as a projection onto a finite dimensional basis of $\calX$. Therefore to compute the kernel a user may need to approximate the true underlying functions from this finite dimensional representation. We wish to ensure that the effectiveness of the tests using reconstructed data.

We formalise the notion of discretisation and reconstruction as follows. Assume that we observe $\lbrace \mathcal{I}{x}_i\rbrace_{i=1}^{n}$ where $\lbrace x \rbrace_{i=1}^n$ are the random samples from $P$ and $\mathcal{I}:\mathcal{X}\rightarrow \mathbb{R}^{N}$ is a discretisation map. For example, $\calI$ could be point evaluation at some some fixed $t_1, t_2, \ldots, t_{N} \in \mathcal{D}$ i.e. $\mathcal{I}x_i = (x_i(t_1), \ldots, x_i(t_{N}))^{\top}$. Noisy point evaluation can also be considered in this framework. Then a reconstruction map  $\mathcal{R}:\mathbb{R}^{N} \rightarrow \mathcal{X}$ is employed so that $\calR\calI X_{n} = \lbrace\mathcal{R}\mathcal{I}{x}_i\rbrace_{i=1}^{n}$ is used to perform the test, analogously for $\calR\calI Y_{n}$. For example, $\calR$ could be a kernel smoother or a spline interpolation operator. In practice one might have a different number of observations for each function, the following results can be adapted to this case straightforwardly. 

\begin{proposition}\label{prop:approx_MMD}
	Assume $k$ is a kernel on $\calX$ satisfying $\abs{k(x,y)-k(u,v)}\leq L\norm{x-y-(u-v)}_{\calX}$ for all $u,v,x,y\in\calX$ for some $L > 0$ and let $P,Q\in\calP$ with  $X_{n}=\{x_{i}\}_{i=1}^{n},Y_{n} = \{y_{i}\}_{i=1}^{n}$ i.i.d. samples from $P,Q$ respectively with reconstructed data $\calR\calI X_{n} = \lbrace\mathcal{R}\mathcal{I}{x}_i\rbrace_{i=1}^{n},\calR\calI Y_{n} = \lbrace\mathcal{R}\mathcal{I}{y}_i\rbrace_{i=1}^{n}$ then
	\begin{align*}
		\bigg\lvert \widehat{\eMMD}_{k}(X_{n},Y_{n})^{2} &- \widehat{\eMMD}_{k}(\calR\calI X_{n},\calR\calI Y_{n})^{2} \bigg\rvert\\
		& \leq \frac{4L}{n}\sum_{i=1}^{n}\norm{\calR\calI x_{i}-x_{i}}_{\calX} + \norm{\calR\calI y_{i}-y_{i}}_{\calX}.
	\end{align*}
\end{proposition}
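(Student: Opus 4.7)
The plan is to bound the difference estimator-by-estimator, pushing everything down to a bound on $|h(z_i,z_j) - h(\tilde z_i, \tilde z_j)|$ where $\tilde z_i = (\mathcal{R}\mathcal{I}x_i, \mathcal{R}\mathcal{I}y_i)$, and then apply the hypothesised Lipschitz-type bound on $k$ four times per summand.

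First I would use the definition of the U-statistic estimator together with the triangle inequality to write
\[
\bigl|\widehat{\mathrm{MMD}}_{k}(X_n,Y_n)^2 - \widehat{\mathrm{MMD}}_{k}(\mathcal{R}\mathcal{I}X_n,\mathcal{R}\mathcal{I}Y_n)^2\bigr| \leq \frac{1}{n(n-1)} \sum_{i \neq j} \bigl|h(z_i,z_j) - h(\tilde z_i, \tilde z_j)\bigr|.
\]
Then, recalling $h(z_i,z_j) = k(x_i,x_j) + k(y_i,y_j) - k(x_i,y_j) - k(x_j,y_i)$, I would apply the assumption $|k(x,y)-k(u,v)|\leq L\|x-y-(u-v)\|_{\calX}$ to each of the four terms. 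For the $k(x_i,x_j)$ term this gives
\[
|k(x_i,x_j) - k(\mathcal{R}\mathcal{I}x_i, \mathcal{R}\mathcal{I}x_j)| \leq L\|(x_i - \mathcal{R}\mathcal{I}x_i) - (x_j - \mathcal{R}\mathcal{I}x_j)\|_{\calX} \leq L\bigl(\|x_i - \mathcal{R}\mathcal{I}x_i\|_{\calX} + \|x_j - \mathcal{R}\mathcal{I}x_j\|_{\calX}\bigr),
\]
and analogously for the other three terms. Summing, each of the four reconstruction errors $\|x_i - \mathcal{R}\mathcal{I}x_i\|_{\calX}$, $\|x_j - \mathcal{R}\mathcal{I}x_j\|_{\calX}$, $\|y_i - \mathcal{R}\mathcal{I}y_i\|_{\calX}$, $\|y_j - \mathcal{R}\mathcal{I}y_j\|_{\calX}$ appears exactly twice, yielding the per-pair bound $|h(z_i,z_j) - h(\tilde z_i,\tilde z_j)| \leq 2L\bigl(\|x_i - \mathcal{R}\mathcal{I}x_i\|_{\calX} + \|x_j - \mathcal{R}\mathcal{I}x_j\|_{\calX} + \|y_i - \mathcal{R}\mathcal{I}y_i\|_{\calX} + \|y_j - \mathcal{R}\mathcal{I}y_j\|_{\calX}\bigr)$.

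Finally I would sum over all ordered pairs $i \neq j$. Each fixed index $i$ appears in $2(n-1)$ such pairs (as either the first or the second coordinate), so the double sum collapses to $\tfrac{1}{n(n-1)} \cdot 2L \cdot 2(n-1) \sum_{i=1}^n(\|x_i - \mathcal{R}\mathcal{I}x_i\|_{\calX} + \|y_i - \mathcal{R}\mathcal{I}y_i\|_{\calX})$, producing the claimed $\tfrac{4L}{n}$ factor.

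There is no real obstacle here: the argument is essentially bookkeeping. The only point requiring a modicum of care is tracking the combinatorial factor from the double sum together with the factor of $2$ arising from the four applications of the Lipschitz bound, to recover the exact constant $4L/n$ rather than a larger one.
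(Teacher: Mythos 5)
Your proposal is correct and follows essentially the same route as the paper's proof: bound the difference of the U-statistics termwise via $|h(z_i,z_j)-h(\tilde z_i,\tilde z_j)|$, apply the Lipschitz-type hypothesis together with the triangle inequality to each of the four kernel evaluations, and count that each index appears $2(n-1)$ times in the sum over ordered pairs to recover the constant $4L/n$. The bookkeeping is accurate and there is nothing to add.
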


\begin{corollary}\label{cor:approx_T}
    If $k_{T}$ is the SE-$T$ or IMQ-$T$ kernel then the above bound holds with $\norm{T(\calR\calI x_{i})-T(x_{i})}_{\calY},\norm{T(\calR\calI y_{i})-T(y_{i})}_{\calY}$ instead of $\norm{\calR\calI x_{i}-x_{i}}_{\calX},\norm{\calR\calI y_{i}-y_{i}}_{\calX}$ with $L = \frac{1}{\sqrt{e}}$ and $L=\frac{2}{3\sqrt{3}}$ respectively.
\end{corollary}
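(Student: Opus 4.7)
The plan is to reduce the corollary to Proposition \ref{prop:approx_MMD} by establishing the appropriate Lipschitz bound for each kernel, then retracing the proof of that proposition with the $\calX$--norm replaced by the $\calY$--norm of the $T$--image. Observe that both kernels factor through $T$ as $k_{T}(x,y) = f(T(x)-T(y))$ where $f\colon\calY\rightarrow\bbR$ is $f(z) = e^{-\norm{z}_{\calY}^{2}/2}$ in the SE case and $f(z) = (1+\norm{z}_{\calY}^{2})^{-1/2}$ in the IMQ case. Both $f$ are Fréchet differentiable on $\calY$, so the mean value inequality on Hilbert spaces yields
\begin{align*}
\abs{f(a)-f(b)} \leq \Bigl(\sup_{z\in\calY}\norm{\nabla f(z)}_{\calY}\Bigr)\,\norm{a-b}_{\calY}
\end{align*}
for any $a,b\in\calY$. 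It remains to compute the suprema.

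For the SE case, $\nabla f(z) = -z\,e^{-\norm{z}_{\calY}^{2}/2}$, so $\norm{\nabla f(z)}_{\calY} = \norm{z}_{\calY} e^{-\norm{z}_{\calY}^{2}/2}$, which depends only on $r = \norm{z}_{\calY}$. Maximising $r \mapsto r e^{-r^{2}/2}$ over $r\geq 0$ gives the maximiser $r = 1$ and value $1/\sqrt{e}$. For the IMQ case, $\nabla f(z) = -z(1+\norm{z}_{\calY}^{2})^{-3/2}$, hence $\norm{\nabla f(z)}_{\calY} = r(1+r^{2})^{-3/2}$. Differentiating, the maximiser is $r^{2} = 1/2$, and evaluating yields $(1/\sqrt{2})(3/2)^{-3/2} = 2/(3\sqrt{3})$. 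Substituting $a = T(x)-T(y)$ and $b = T(u)-T(v)$ then gives the pointwise kernel bound
\begin{align*}
\abs{k_{T}(x,y) - k_{T}(u,v)} \leq L\,\bigl\lVert T(x)-T(y)-(T(u)-T(v))\bigr\rVert_{\calY}
\end{align*}
with the stated $L$ in each case.

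Finally, I would copy the proof of Proposition \ref{prop:approx_MMD} verbatim, noting that the only place the Lipschitz hypothesis is invoked is when bounding each of the four kernel evaluations inside $h(z_{i},z_{j}) - h(\widetilde{z}_{i},\widetilde{z}_{j})$ (where $\widetilde{z}_{i}$ denotes the reconstructed data). Each such bound splits by the triangle inequality as $\lVert T(a)-T(\widetilde{a})\rVert_{\calY} + \lVert T(b)-T(\widetilde{b})\rVert_{\calY}$, and summation over the $U$--statistic indices produces the same constants as in the original proposition with $\norm{\calR\calI x_{i}-x_{i}}_{\calX}$ replaced by $\norm{T(\calR\calI x_{i})-T(x_{i})}_{\calY}$ (and likewise for the $y_{i}$). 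The only real work is the one-variable optimisation giving the sharp constants $1/\sqrt{e}$ and $2/(3\sqrt{3})$; there is no genuine obstacle, and the rest is a mechanical reuse of the previous proof.
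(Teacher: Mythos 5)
Your proposal is correct and matches the paper's (very terse) argument: the paper likewise deduces the corollary from the Lipschitz constants of $e^{-x^{2}/2}$ and $(x^{2}+1)^{-1/2}$, namely $1/\sqrt{e}$ and $2/(3\sqrt{3})$, and then reruns the proof of Proposition \ref{prop:approx_MMD} with $\norm{T(\cdot)-T(\cdot)}_{\calY}$ in place of $\norm{\cdot-\cdot}_{\calX}$. Your use of the Fr\'echet gradient and the Hilbert-space mean value inequality is just a cosmetic variant of composing the scalar Lipschitz bound with the $1$-Lipschitz map $z\mapsto\norm{z}_{\calY}$; the substantive step, the one-variable optimisations yielding the two constants, is identical and correct.
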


An analogous result can be derived for the linear time estimator with the same proof technique. While Proposition \ref{prop:approx_MMD} provides a statement on the approximation of $\widehat{\MMD}_{k}(\calR\calI X_{n},\calR\calI Y_{n})^{2}$ we are primarily concerned with its statistical properties. Asymptotically, the test efficiency is characterised via the Gaussian approximation in Theorem \ref{thm:clt_mmd_test}, specifically through the asymptotic variance in \eqref{eq:first_power_surr}.   The following result provides conditions under which a similar central limit theorem holds for the estimator based on reconstructed data, with the same asymptotic variance. It imposes conditions on the number of discretisation points per function sample $N$, the error of the approximations and the number of function samples $n$.

\begin{theorem}\label{thm:recon_normal}
        Let $k$ satisfy the condition in Proposition \ref{prop:approx_MMD} and let $X_n = \lbrace x_i \rbrace_{i=1}^n$ and $Y_n = \lbrace y_i \rbrace_{i=1}^n$ be i.i.d. samples from $P$ and $Q$ respectively with $P\neq Q$, and associated reconstructions $\calR\calI X_{n}$ and $\calR\calI Y_{n}$ based on $N(n)$ dimensional discretisations $\mathcal{I}{X}_n$ and   $\mathcal{I}{Y}_n$ where $N(n)\rightarrow \infty$ as $n\rightarrow \infty$.   If $n^{\frac{1}{2}} \mathbb{E}_{x\sim P}[\lVert x - \mathcal{R}\mathcal{I} x \rVert_{\calX}] \rightarrow 0$ and $n^{\frac{1}{2}} \mathbb{E}_{y\sim P}[\lVert y - \mathcal{R}\mathcal{I} y \rVert_{\calX}] \rightarrow 0$ as $n\rightarrow \infty$, then for $\xi = 4\emph{Var}_{z}\left[\bbE_{z'}[h(z,z')]\right]$
        	\begin{align*}
		n^{\frac{1}{2}}\big(\widehat{\eMMD}_{k_{T}}(\calR\mathcal{I}{X}_{n},\calR\mathcal{I}{Y}_{n})^{2}-\eMMD_{k_{T}}(P,Q)^{2}\big) \xrightarrow{d} \calN(0,\xi).
	\end{align*}
\end{theorem}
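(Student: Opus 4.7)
The strategy is a standard Slutsky-type decomposition: add and subtract the ``oracle'' estimator $\widehat{\MMD}_{k}(X_n, Y_n)^2$ computed on the un-discretised data, and treat each piece separately. Concretely, write
\begin{align*}
    n^{1/2}\bigl(\widehat{\MMD}_{k}(\calR\calI X_n, \calR\calI Y_n)^2 - \MMD_k(P,Q)^2\bigr)
    &= n^{1/2}\bigl(\widehat{\MMD}_{k}(X_n, Y_n)^2 - \MMD_k(P,Q)^2\bigr) \\
    &\quad + n^{1/2}\bigl(\widehat{\MMD}_{k}(\calR\calI X_n, \calR\calI Y_n)^2 - \widehat{\MMD}_{k}(X_n, Y_n)^2\bigr).
\end{align*}
Under $P\neq Q$, Theorem \ref{thm:clt_mmd_test} yields that the first summand converges in distribution to $\calN(0,\xi)$. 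The goal is then to show the second summand is $o_{\mathbb{P}}(1)$, at which point Slutsky's theorem delivers the claim.

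For the remainder term, I would invoke Proposition \ref{prop:approx_MMD} to obtain the deterministic bound
\begin{align*}
    n^{1/2}\bigl|\widehat{\MMD}_{k}(\calR\calI X_n, \calR\calI Y_n)^2 - \widehat{\MMD}_{k}(X_n, Y_n)^2\bigr|
    \leq \frac{4L}{n^{1/2}}\sum_{i=1}^{n}\bigl(\norm{\calR\calI x_i - x_i}_{\calX} + \norm{\calR\calI y_i - y_i}_{\calX}\bigr).
\end{align*}
Because the reconstruction operator $\calR\calI$ is deterministic and the $x_i$ are i.i.d.\ (and similarly for $y_i$), the summands are themselves i.i.d.\ non-negative random variables, and taking expectations gives
\begin{align*}
    \bbE\!\left[\frac{4L}{n^{1/2}}\sum_{i=1}^{n}\bigl(\norm{\calR\calI x_i - x_i}_{\calX} + \norm{\calR\calI y_i - y_i}_{\calX}\bigr)\right]
    = 4L\, n^{1/2}\bigl(\bbE_{x\sim P}\norm{\calR\calI x - x}_{\calX} + \bbE_{y\sim Q}\norm{\calR\calI y - y}_{\calX}\bigr),
\end{align*}
which vanishes by hypothesis. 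Since the summands are non-negative, Markov's inequality upgrades this expectation bound to convergence to $0$ in probability, completing the Slutsky argument.

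The main technical obstacle is simply verifying that the Lipschitz-type hypothesis on $k$ in Proposition \ref{prop:approx_MMD} is preserved as $n$ (and hence $N(n)$) grows; but since the relevant Lipschitz constant $L$ in Corollary \ref{cor:approx_T} depends only on the kernel family (e.g.\ $1/\sqrt{e}$ for SE-$T$, $2/(3\sqrt{3})$ for IMQ-$T$) and not on the discretisation level, this poses no real issue. A minor subtlety worth noting in the writeup is that $\calR\calI$ is allowed to depend on $n$ through $N(n)$, but this only affects the distribution of the summands at each fixed $n$, not the i.i.d.\ structure needed to pass expectation through the sum. Once this is in place, the conclusion follows directly from the decomposition above.
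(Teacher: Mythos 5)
Your proposal is correct and follows essentially the same route as the paper's proof: the same decomposition into an oracle CLT term handled by Theorem \ref{thm:clt_mmd_test} plus a remainder bounded via Proposition \ref{prop:approx_MMD}, with Markov's inequality and the i.i.d.\ structure showing the remainder is $o_{\mathbb{P}}(1)$ before applying Slutsky's theorem. No substantive differences.
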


A similar result can be derived for the linear time estimator by using the linear time estimator version of Proposition \ref{prop:approx_MMD}.  The discretisation map, number of discretisations per function sample and the reconstruction map need to combine to satisfy the convergence assumption. For example if a weaker reconstruction map is used then more observations per function sample will be needed to compensate for this. Additionally if the discretisation map offers less information about the underlying function, for example it provides observations that are noise corrupted, then more observations per function sample are needed. 

We now discuss three settings in which these assumptions hold, relevant to different applications.
We shall assume that $k$ satisfies the conditions of Proposition \ref{prop:approx_MMD} and that $T = I$.

\subsubsection{Linear interpolation of regularly sampled data}
    Let $\calX = L^{2}([0,1])$ and $\Xi_{N(n)} = \{t_{i}\}_{i=1}^{N(n)}$ be a mesh of evaluation points where $t_{i+1}-t_i = N(n)^{-1}$ for all $i$ and define $\mathcal{I} x = (x(t_1), \ldots, x(t_{N(n)}))^\top \in \mathbb{R}^{N(n)}$.  Let $\mathcal{R}$ be the piecewise linear interpolant defined as
   $$
    (\mathcal{R}\calI x)(t) = (x(t_{i+1}) - x(t_k))\frac{t-t_i}{t_{i+1}-t_i} + x(t_i), \quad \mbox{ for } t \in [t_i, t_{i+1}).
   $$
   Suppose that realisations $x \sim P$ and $y \sim Q$ are almost surely in $C^2([0,1])$ and in particular satisfy $\mathbb{E}_{x\sim P}[\lVert x'' \rVert_{\calX}^2] < \infty$ and $\mathbb{E}_{y\sim Q}[\lVert y'' \rVert_{\calX}^2] < \infty$.  Then 
   $$
    \bbE_{x\sim P}[\lVert x - \mathcal{R}\mathcal{I} x \rVert_{\calX}] \leq \frac{1}{N(n)^2}  \bbE_{x\sim P}[\lVert x'' \rVert_{\calX}],
   $$
   and analogously for $y\sim Q$. Therefore if $N(n) \sim n^{\alpha}$ with $\alpha > 1/4$ then the conditions of Theorem \ref{thm:recon_normal} are satisfied. 

\subsubsection{Kernel interpolant of quasi-uniformly sampled data}
    Let $\calX = L^{2}(\calD)$ with $\mathcal{D} \subset \mathbb{R}^d$ compact.  As in the previous example, $\mathcal{I}$ will be the evaluation operator over a set of points $\Xi_{N(n)} = \{t_i\}_{i=1}^{N(n)}$ but now we assume the points are placed quasi-uniformly in the scattered data approximation sense, for example regularly placed grid points, see \citet{Wynne2020} and \citet[Chapter 4]{Wendland2005} for other method to obtain quasi-uniform points. 
    
    We set $\calR$ as the kernel interpolant using a kernel $k_{0}$ with RKHS norm equivalent to $W^{\nu}_{2}(\calD)$ with $\nu > d/2$, this is achieved by the common Mat\'ern and Wendland kernels \citep{Wendland2005,Kanagawa2018Review}. For this choice of recovery operator, $(\calR\calI x)(t) = k_{0}(t,\Xi)K_{0}^{-1}\calI(x)$ where $k_{0}(t,\Xi) = (k_{0}(t,t_{1}),\ldots,k_{0}(t,t_{N(n)})$ and $K_{0}$ is the kernel matrix of $k_{0}$ over $\Xi_{N(n)}$.
    
    Suppose the realisations of $P$ and $Q$ lie almost surely in $W^{\tau}_{2}(\calD)$ for some $\tau > d/2$, this assumption is discussed when $P,Q$ are Gaussian processes in \citet[Section 4]{Kanagawa2018Review}, then
\begin{align*}
\bbE_{x\sim P}[\norm{x-\calR\calI x}_{\calX}] \leq CN(n)^{-(\tau\wedge\nu)/d},
\end{align*}
for some constant $C > 0$, with an identical result holding for realisations of $Q$ \citep{Wynne2020,Narcowich2006}.  It follows that choosing $N(n) \sim n$ guarantees that the conditions of Theorem \ref{thm:recon_normal}  hold in this setting. Here we see that to maintain a scaling of $N(n)$ independent of dimension $d$ we need the signal smoothness $\nu,\tau$ to increase with $d$. 

Note that the case where $\calI$ is pointwise evaluation corrupted by noise may be treated in a similar way by using results from Bayesian non-parametrics, for example \citet[Theorem 5]{VanderVaart2011}. In this case $\calR$ would be the posterior mean of a Gaussian process that is conditioned on $\calI(x)$. 

\subsubsection{Projection onto an orthonormal basis}
    Let $\calX$ be an arbitrary real, separable Hilbert space and $\{e_n\}_{n=1}^{\infty}$ be an orthonormal basis.  Suppose that $\mathcal{I}$ is a projection operator onto the first $N(n)$ elements of the basis $\mathcal{I}x = (\langle x, e_1\rangle_{\calX}, \ldots, \langle x, e_{N(n)}\rangle_{\calX})^\top$
    and $\calR$ constructs a function from basis coefficients $\mathcal{R}(\beta_1, \ldots, \beta_{N(n)}) = \sum_{i=1}^{N(n)} \beta_i e_i$ meaning $\calR\calI x = \sum_{i=1}^{N(n)}\langle x,e_{i}\rangle_{\calX} e_{i}$. A typical example on $L^2([0,1])$ would be a Fourier series representation of the samples $\lbrace x_i \rbrace_{i=1}^n$ and $\lbrace y_i \rbrace_{i=1}^n$ from which the functions can be recovered efficiently via an inverse Fast Fourier Transform.   By Parseval's theorem  $\mathbb{E}_{x\sim P}[\lVert x - \mathcal{R}\mathcal{I}x \rVert^2_{\calX}] = \sum_{i=N(n)+1}^\infty |\langle x, e_i \rangle_{\calX}|^2 \rightarrow 0$ as $N(n) \rightarrow \infty$.  For the conditions of Theorem \ref{thm:recon_normal} to hold, we require that $n^{1/2}\mathbb{E}_{x\sim P}\big[\sum_{i=N(n)+1}^\infty |\langle x, e_i \rangle_{\calX}|^2\big]\rightarrow 0$ as $n\rightarrow \infty$ which means $N(n)$ will need to grow in a way to compensate for the auto-correlation of realisations of $P$ and $Q$. 
    
    In this setting the use of the integral kernels described in Section \ref{subsec:integral_kernel} are particularly convenient.   Indeed, let $C = \sum_{i=1}^\infty \lambda_i e_i\otimes e_i \in L^1_+(\calX)$ and consider the integral kernel $k_{C, k_0}$ where $k_0(s,t) = \Phi(s) \Phi(t)$ for a feature map $\Phi$ taking values in $\mathbb{R}$. An evaluation of the kernel $k(x,y)$ can then be approximated using a random Fourier feature approach \citep{NIPS2007_3182} by  
    $$
       k(x,y) \approx \frac{1}{n_S} \sum_{l=1}^{n_S} \Phi\left(\sum_{i=1}^{N(n)} \lambda_i^{\frac{1}{2}} x_i \eta_i^l\right) \Phi\left(\sum_{j=1}^{N(n)} \lambda_j^{\frac{1}{2}} y_j \eta_j^l\right),
    $$
    where $x_i = \langle x, e_i\rangle_{\calX}$, $y_i = \langle y, e_i \rangle_{\calX}$ and $\eta_i^l \sim \mathcal{N}(0,1)$ i.i.d. for $i = 1,\ldots N(n)$ and $l=1,\ldots, n_S$ for some $n_S \in \mathbb{N}$.  The permits opportunities to reduce the computational cost of MMD tests as judicious choices of $\Phi$ will permit accurate approximations of $k(x,y)$ using $n_S$ small.  Similarly, the weights, $\lambda_i$ can be chosen to reduce the dimensionality of the functions $x_i$ and $y_i$.

\subsection{Explicit Calculations for Gaussian Processes}\label{subsec:GP_embedding}

A key property of the SE-$T$ kernel is that the mean-embedding $\Phi_{k_{T}}P$ and \sloppy$\MMD_{k_{T}}(P,Q)^{2}$ have closed form solutions when $P,Q$ are Gaussian measures. Using the natural correspondence between Gaussian measures and Gaussian processes from Section \ref{sec:inf_measures} we may get closed form expressions for Gaussian processes. This addresses the open question regarding the link between Bayesian non-parametrics methods and kernel mean-embeddings that was discussed in \citet[Section 6.2]{Muandet2017}. 

Before stating the next result we need to introduce the concept of determinant for an operator, for $S\in L_{1}(\calX)$ define $\det(I+S) = \prod_{n=1}^{\infty}(1+\lambda_{n})$ where $\{\lambda_{n}\}_{n=1}^{\infty}$ are the eigenvalues of $S$. The equality $\det\big((I+S)(I+R)\big) = \det(I+S)\det(I+R)$ holds and is frequently used. 

\begin{theorem}\label{thm:embedding_of_GP}
	Let $k_{T}$ be the SE-$T$ kernel for some $T\in  L^{+}(\calX)$ and $P = N_{a,S}$ be a non-degenerate Gaussian measure on $\calX$ then
	\begin{align*}
		\Phi_{k_{T}}(N_{a,S})(x) = \det(I + TST)^{-\frac{1}{2}}e^{-\frac{1}{2}\langle (I+TST)^{-1}T(x-a),T(x-a)\rangle_{\calX}}.
	\end{align*}
\end{theorem}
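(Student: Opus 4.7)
The plan is to reduce the infinite-dimensional Bochner integral to a product of one-dimensional Gaussian integrals via the Karhunen--Loève expansion. First, I would change variables by setting $z = y - a$ and $v = T(x-a)$, so that
\[
\Phi_{k_T}(N_{a,S})(x) = \int_\calX e^{-\frac{1}{2}\|v - Tz\|^2_\calX}\, dN_{0,S}(z).
\]
Since $T \in L^+(\calX)$ is bounded and self-adjoint and $S \in L^+_1(\calX)$, the operator $\Sigma := TST$ lies in $L^+_1(\calX)$, and the pushforward $Tz$ under $z \sim N_{0,S}$ is a centered Gaussian random element of $\calX$ with covariance $\Sigma$.

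Next, I would take an orthonormal eigenbasis $\{f_n\}_{n=1}^\infty$ of $\Sigma$ with eigenvalues $\lambda_n \geq 0$ and use the KL expansion to write $Tz = \sum_n \lambda_n^{1/2}\xi_n f_n$ for i.i.d.\ $\xi_n \sim \calN(0,1)$. Decomposing $v = \sum_n \alpha_n f_n + r$ with $\alpha_n = \langle v, f_n\rangle_\calX$ and $r$ orthogonal to $\overline{\mathrm{ran}(\Sigma)}$, one obtains
\[
\|v - Tz\|^2_\calX = \|r\|^2_\calX + \sum_{n=1}^\infty (\alpha_n - \lambda_n^{1/2}\xi_n)^2
\]
almost surely, with convergence of the series guaranteed by $\sum_n \lambda_n < \infty$ and $\sum_n \alpha_n^2 \leq \|v\|^2_\calX$. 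Independence of the $\xi_n$ combined with dominated convergence (the integrand is bounded by $1$) then factors the expectation into
\[
e^{-\frac{1}{2}\|r\|^2_\calX}\prod_{n=1}^\infty \bbE\bigl[e^{-\frac{1}{2}(\alpha_n - \lambda_n^{1/2}\xi_n)^2}\bigr] = e^{-\frac{1}{2}\|r\|^2_\calX}\prod_{n=1}^\infty (1+\lambda_n)^{-1/2}e^{-\alpha_n^2/[2(1+\lambda_n)]},
\]
where each factor is evaluated by a standard one-dimensional Gaussian integral.

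To finish, I would identify $\prod_n(1+\lambda_n)$ with the Fredholm determinant $\det(I + TST)$, which is well defined precisely because $\Sigma$ is trace class. Applying spectral calculus, $(I + \Sigma)^{-1}$ acts as multiplication by $(1+\lambda_n)^{-1}$ on each $f_n$ and as the identity on $\ker(\Sigma) = \overline{\mathrm{ran}(\Sigma)}^\perp$, giving
\[
\|r\|^2_\calX + \sum_n \frac{\alpha_n^2}{1+\lambda_n} = \langle (I + TST)^{-1}v, v\rangle_\calX.
\]
Substituting $v = T(x-a)$ and combining with the determinant identification yields the claimed formula.

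The principal obstacle is handling the infinite-dimensional bookkeeping rigorously: justifying termwise use of independence and dominated convergence to exchange $\bbE$ with the infinite product, verifying that the Fredholm determinant is finite and strictly positive so the expression is meaningful, and confirming that the spectral sum in the exponent coincides with the operator-theoretic quadratic form $\langle (I + TST)^{-1}Tv, Tv\rangle_\calX$ on all of $\calX$, including on the kernel component $r$ where $\Sigma$ vanishes. Once these technicalities are addressed, the argument is a clean extension of the familiar finite-dimensional Gaussian convolution calculation.
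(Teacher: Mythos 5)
Your proof is correct, but it reaches the formula by a different computational route than the paper. The paper's proof also begins with the reduction $\Phi_{k_T}(N_{a,S})(x)=\Phi_{k_I}(N_{Ta,TST})(Tx)$ via the pushforward identity, but then evaluates the remaining Gaussian integral at the operator level: it invokes the abstract formula of \citet[Proposition 1.2.8]{DaPrato2002} for $\int e^{-\frac{1}{2}\langle y,y\rangle}e^{-\langle y,c\rangle}\,dN_S(y)$, which produces the exponent $-\frac{1}{2}\langle (I-S^{1/2}(I+S)^{-1}S^{1/2})c,c\rangle$, and then applies the Sherman--Morrison--Woodbury identity for operators to collapse this to $-\frac{1}{2}\langle (I+S)^{-1}c,c\rangle$. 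You instead diagonalise the pushforward covariance $\Sigma=TST$, factor the expectation into independent one-dimensional Gaussian integrals, and reassemble; this effectively re-derives the Da Prato lemma from scratch and bypasses the Woodbury step entirely, since the spectral computation yields $(1+\lambda_n)^{-1}$ directly rather than $1-\lambda_n/(1+\lambda_n)$. Your version is longer but more self-contained, and it has the merit of explicitly handling the component of $v$ in $\ker(\Sigma)$ (where $(I+\Sigma)^{-1}$ acts as the identity), a case that can arise even for non-degenerate $S$ when $T$ is not injective and which the paper delegates to the cited propositions. The technical points you flag --- exchanging the expectation with the infinite product, convergence of the Fredholm determinant, and matching the spectral sum with the operator quadratic form --- are all routine under the trace-class property of $TST$ and are resolved exactly as you indicate, so there is no gap.
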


\begin{theorem}\label{thm:MMD_two_GPs}
	Let $k_{T}$ be the SE-$T$ kernel for some $T\in L^{+}(\calX)$ and $P = N_{a,S}, Q = N_{b,R}$ be non-degenerate Gaussian measures on $\calX$ then 
	\begin{align*}
	\eMMD_{k_{T}} (P,Q)^{2} &=\det(I+2TST)^{-\frac{1}{2}} + \det(I+2TRT)^{-\frac{1}{2}} \\
	& -2\det\big((I+TST)(I+(TRT)^{\frac{1}{2}}(I+TST)^{-1}(TRT)^{\frac{1}{2}})\big)^{-\frac{1}{2}}\\
	& \hspace{1cm}\times e^{-\frac{1}{2}\langle (I+T(S+R)T)^{-1}T(a-b),T(a-b)\rangle_{\calX}}.
	\end{align*}
\end{theorem}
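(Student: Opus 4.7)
The strategy is to use the translation invariance of $k_{T}$ to convert each of the three double integrals in the representation \eqref{eq:MMD_integral_version} of $\MMD_{k_{T}}(P,Q)^{2}$ into a single evaluation of the mean-embedding formula given by Theorem~\ref{thm:embedding_of_GP}. Since $k_{T}(x,y) = e^{-\frac{1}{2}\norm{T(x-y)}_{\calX}^{2}}$ depends only on $x-y$, one has $\int\!\!\int k_{T}\, dP\, dP = \int k_{T}(z,0)\, dN_{0,2S}(z)$ because $x-x'\sim N_{0,2S}$ for i.i.d.\ $x,x'\sim P$, and analogously $\int\!\!\int k_{T}\, dQ\, dQ = \int k_{T}(z,0)\, dN_{0,2R}(z)$ and $\int\!\!\int k_{T}\, dP\, dQ = \int k_{T}(z,0)\, dN_{a-b,S+R}(z)$. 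After the translation $z = w-m$, the generic integral $\int k_{T}(z,0)\, dN_{m,V}(z)$ equals $\Phi_{k_{T}}(N_{0,V})(-m)$, which by Theorem~\ref{thm:embedding_of_GP} evaluates to
\[
    \det(I+TVT)^{-\frac{1}{2}}\, e^{-\frac{1}{2}\langle (I+TVT)^{-1}Tm,\, Tm\rangle_{\calX}}.
\]

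Substituting $(m,V)\in\{(0,2S),(0,2R),(a-b,S+R)\}$ then immediately gives the two diagonal terms $\det(I+2TST)^{-1/2}$ and $\det(I+2TRT)^{-1/2}$ (the exponent vanishes for $m=0$), and for the cross term yields the prefactor $\det(I+T(S+R)T)^{-1/2}$ with exponent $-\frac{1}{2}\langle (I+T(S+R)T)^{-1}T(a-b), T(a-b)\rangle_{\calX}$, matching the statement exactly. What remains is to rewrite $\det(I+T(S+R)T)$ in the stated factored form $\det\!\bigl((I+TST)(I+(TRT)^{1/2}(I+TST)^{-1}(TRT)^{1/2})\bigr)$. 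This rewrite follows by applying $\det(I+AB)=\det(I+BA)$ to the inner factor with $A = (TRT)^{1/2}$ and $B = (I+TST)^{-1}(TRT)^{1/2}$, giving $\det(I+(I+TST)^{-1}TRT) = \det(I+TST)^{-1}\det(I+TST+TRT)$; multiplicativity with the outer factor $\det(I+TST)$ then collapses the product to $\det(I+T(S+R)T)$.

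The main technical obstacle is the bookkeeping of operator classes: one must verify that each of $TST$, $TRT$, $(I+TST)^{-1}TRT$ and $(TRT)^{1/2}(I+TST)^{-1}(TRT)^{1/2}$ lies in $L^{+}_{1}(\calX)$ (which follows from $S,R\in L^{+}_{1}(\calX)$, boundedness of $T\in L^{+}(\calX)$, and the fact that $(I+TST)^{-1}$ is a bounded operator of norm at most one), so that every Fredholm determinant in the chain is well defined and the trace-cyclic identity $\det(I+AB)=\det(I+BA)$ applies. The author's particular factorisation is presumably preferred because each factor in the product is manifestly of the form $\det(I+A)$ with $A\in L^{+}_{1}(\calX)$, which avoids any appeal to inverses or square roots of operators that are not guaranteed to be non-negative.
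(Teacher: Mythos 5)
Your proof is correct, but it reaches the formula by a genuinely different route from the paper. The paper computes each double integral by iterated integration: the inner integral is Theorem \ref{thm:embedding_of_GP}, and the resulting Gaussian-weighted exponential is then integrated a second time using \citet[Proposition 1.2.8]{DaPrato2002}, followed by a Sherman--Morrison--Woodbury rearrangement of the exponent; this produces the cross-term determinant directly in the factored form $\det(I+S)^{-1/2}\det(I+R^{1/2}(I+S)^{-1}R^{1/2})^{-1/2}$, and the diagonal terms are then collapsed to $\det(I+2TST)^{-1/2}$ via $\det(I+AB)=\det(I+BA)$. You instead exploit translation invariance at the level of the double integral itself: since $x-x'\sim N_{0,2S}$, $y-y'\sim N_{0,2R}$ and $x-y\sim N_{a-b,S+R}$, each term is a single evaluation of the mean embedding, $\Phi_{k_T}(N_{m,V})(0)$, so Theorem \ref{thm:embedding_of_GP} is invoked once per term and no second Gaussian integral is needed. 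This is cleaner and avoids the Woodbury manipulation in the exponent entirely, at the cost of having to prove \emph{a posteriori} that $\det(I+T(S+R)T)$ equals the factored determinant in the statement --- which you do correctly via $\det(I+AB)=\det(I+BA)$ with $A,B$ Hilbert--Schmidt and multiplicativity of the Fredholm determinant, the same identities the paper uses in the opposite direction. Your attention to the operator-class bookkeeping (each argument of $\det(I+\cdot)$ being trace class, non-degeneracy of $2S$, $2R$ and $S+R$ so that Theorem \ref{thm:embedding_of_GP} applies) covers the only points where such a shortcut could go wrong.
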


These results outline the geometry of Gaussian measures with respect to the distance induced by the SE-$T$ kernel. We see that the means only occur in the formula through their difference and if both mean elements are zero then the distance is measured purely in terms of the spectrum of the covariance operators. 

\begin{corollary}\label{cor:MMD_two_GPs_commute}
	Under the Assumptions of Theorem \ref{thm:MMD_two_GPs} and that $T,S,R$ commute then
	\begin{align*}
		\emph{MMD}_{k_{T}}(P,Q)^{2} & = \det(I + 2TST)^{-\frac{1}{2}} + \det(I + 2TRT)^{-\frac{1}{2}} \\
	& -2 \det(I + T(S+R)T)^{-\frac{1}{2}}e^{-\frac{1}{2}\langle (I + T(S+R)T)^{-1}T(a-b),T(a-b)\rangle_{\calX}}.
	\end{align*}
\end{corollary}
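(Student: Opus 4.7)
The plan is to start from the general expression for $\mathrm{MMD}_{k_T}(P,Q)^2$ given in Theorem \ref{thm:MMD_two_GPs} and show that the only term which simplifies under the commutativity hypothesis is the cross term. The first two self-interaction terms $\det(I+2TST)^{-1/2}$ and $\det(I+2TRT)^{-1/2}$, as well as the exponential factor $\exp(-\tfrac{1}{2}\langle (I+T(S+R)T)^{-1}T(a-b),T(a-b)\rangle_{\calX})$, already appear in the desired form, so the entire proof reduces to showing
\begin{align*}
\det\!\big((I+TST)(I+(TRT)^{1/2}(I+TST)^{-1}(TRT)^{1/2})\big) = \det(I+T(S+R)T),
\end{align*}
under the assumption that $T,S,R$ commute pairwise.

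First I would invoke the functional calculus for commuting non-negative self-adjoint operators: since $T, R \in L^+(\calX)$ commute, $TRT = T^2R$ and $(TRT)^{1/2} = T R^{1/2}$, which then commutes with $T$, $S$, and hence with $I+TST$ and with $(I+TST)^{-1}$. Therefore the ``sandwich'' $(TRT)^{1/2}(I+TST)^{-1}(TRT)^{1/2}$ collapses to $(I+TST)^{-1}(TRT)^{1/2}(TRT)^{1/2} = (I+TST)^{-1}TRT$.

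Next I would use the multiplicativity of the operator determinant (valid because $TST$, $TRT$ are trace class, so $I+TST$ and the sandwiched factor are of the form $I$ plus trace class) to write
\begin{align*}
\det\!\big((I+TST)(I+(I+TST)^{-1}TRT)\big) &= \det\!\big((I+TST) + TRT\big)\\
 &= \det(I+T(S+R)T),
\end{align*}
where the first equality is simply distributing $(I+TST)$ onto the second factor and cancelling $(I+TST)(I+TST)^{-1} = I$. Substituting this back into the expression from Theorem \ref{thm:MMD_two_GPs} gives exactly the stated corollary.

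The main technical point to be careful about is the use of the functional calculus for $(TRT)^{1/2}$: one needs to justify that commutativity of $T$ and $R$ indeed passes to their square roots, which follows because the square root of a non-negative self-adjoint operator is a (strong) limit of polynomials in that operator, so it commutes with anything the original does. Everything else is bookkeeping with the determinant identity $\det(AB) = \det(A)\det(B)$ for $I$ plus trace class operators, which is standard.
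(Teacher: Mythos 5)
Your proposal is correct and matches the intended derivation: the paper gives no separate proof of this corollary, but the implied argument is exactly the reduction you carry out, namely simplifying the cross-term determinant $\det\big((I+TST)(I+(TRT)^{1/2}(I+TST)^{-1}(TRT)^{1/2})\big)$ to $\det(I+T(S+R)T)$ under commutativity (the paper performs the analogous collapse for the $S=R$, $b=0$ case at the end of the proof of Theorem \ref{thm:MMD_two_GPs}, citing the Sherman--Morrison--Woodbury identity where you instead commute the square roots through directly). Your care with $(TRT)^{1/2}=TR^{1/2}$ via functional calculus and the multiplicativity of $\det(I+\cdot)$ on trace-class perturbations is exactly the right level of justification.
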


Since the variance terms $\xi_{1},\xi_{2}$ from Section \ref{sec:RKHS} are simply multiple integrals of the SE-$T$ kernel against Gaussian measures we may obtain closed forms for them too. Theorem \ref{thm:variance_of_estimator_mean_shift} is a particular instance of the more general Theorem \ref{thm:variance_of_estimator} in the Appendix. 

\begin{theorem}\label{thm:variance_of_estimator_mean_shift}
	Let $P = N_{S}, Q = N_{m,S}$ be non-degenerate Gaussian measures on $\calX$, $T\in L^{+}_{1}(\calX)$ and assume $T$ and $S$ commute then when using the SE-$T$ kernel
	\begin{align*}
	\xi_{1} &= 2\det(\Sigma_{S})^{-\frac{1}{2}}\big(1 + e^{-\langle (I+3TST)^{-1}Tm,Tm\rangle_{\calX}}-2e^{-\frac{1}{2}\langle (I+2TST)\Sigma_{S}^{-1}Tm,Tm\rangle_{\calX}}\big)\\
	&\quad -2\det(I+2TST)^{-1}\big(1+e^{-\langle (I+2TST)^{-1}Tm,Tm\rangle_{\calX}}-2e^{-\frac{1}{2}\langle (I+2TST)^{-1}Tm,Tm\rangle_{\calX}}\big),\\
	\xi_{2} &= 2\det(I+4TST)^{-\frac{1}{2}}\big(1+e^{-\langle (I+4TST)^{-1}Tm,Tm\rangle_{\calX}} \big)\\
		& \quad- 2\det(I+2TST)^{-1}\big(1+e^{-\langle (I+2TST)^{-1}Tm,Tm\rangle_{\calX}}-4e^{-\frac{1}{2}\langle (I+2TST)^{-1}Tm,Tm\rangle_{\calX}}\big)\\
		&\quad -8\det(\Sigma_{S})^{-\frac{1}{2}}e^{-\frac{1}{2}\langle (I+2TST)\Sigma_{S}^{-1}Tm,Tm\rangle_{\calX}},
	\end{align*}
	where $\Sigma_{S} = (I+TST)(I+3TST)$.
\end{theorem}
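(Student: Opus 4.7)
The strategy is to reduce both $\xi_{1}$ and $\xi_{2}$ to combinations of Gaussian integrals, each either a mean embedding (handled by Theorem \ref{thm:embedding_of_GP}) or an integral of a product of Gaussian-type functions against a Gaussian measure (handled by a single moment lemma below). Writing $z=(x,y)$ with $x\sim P$, $y\sim Q$ independent and $g(z)\coloneqq\bbE_{z'}[h(z,z')]$, the definition of the mean embedding gives
\begin{align*}
g(z) = \Phi_{k_{T}}P(x)+\Phi_{k_{T}}Q(y)-\Phi_{k_{T}}Q(x)-\Phi_{k_{T}}P(y),
\end{align*}
so that $\xi_{1}=\bbE_{z}[g(z)^{2}]-\MMD_{k_{T}}(P,Q)^{4}$ and $\xi_{2}=\bbE_{z,z'}[h(z,z')^{2}]-\MMD_{k_{T}}(P,Q)^{4}$. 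The $\MMD^{4}$ contribution is already in closed form via Corollary \ref{cor:MMD_two_GPs_commute}, so the task reduces to computing $\bbE_{z}[g(z)^{2}]$ and $\bbE_{z,z'}[h(z,z')^{2}]$.

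For $\bbE_{z}[g(z)^{2}]$, Theorem \ref{thm:embedding_of_GP} with $A=T(I+TST)^{-1}T$ and $\alpha=\det(I+TST)^{-1/2}$ gives $\Phi_{k_{T}}P(u)=\alpha e^{-\frac{1}{2}\langle Au,u\rangle_{\calX}}$ and $\Phi_{k_{T}}Q(u)=\alpha e^{-\frac{1}{2}\langle A(u-m),u-m\rangle_{\calX}}$. Expanding the square of the four-term expression for $g(z)$ produces sixteen products of the form $e^{-\frac{1}{2}\langle A(u-a),u-a\rangle_{\calX}-\frac{1}{2}\langle A(u-b),u-b\rangle_{\calX}}$ in $u=x$ or $u=y$, which, after completing the square in $u$, factor as $e^{-\frac{1}{4}\langle A(a-b),a-b\rangle_{\calX}}e^{-\langle A(u-(a+b)/2),u-(a+b)/2\rangle_{\calX}}$. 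The key computational lemma, provable directly from the Fourier characterisation $\widehat{N}_{c,S}(y)=e^{i\langle c,y\rangle_{\calX}-\frac{1}{2}\langle Sy,y\rangle_{\calX}}$, is the Hilbert-space Gaussian moment identity
\begin{align*}
\int_{\calX} e^{-\frac{1}{2}\langle Bu,u\rangle_{\calX}+\langle v,u\rangle_{\calX}}dN_{c,S}(u) = \det(I+SB)^{-\frac{1}{2}} e^{\frac{1}{2}\langle(I+SB)^{-1}(v-Bc),v-Bc\rangle_{\calX}+\langle v,c\rangle_{\calX}-\frac{1}{2}\langle Bc,c\rangle_{\calX}},
\end{align*}
which I would apply with $B=2A$, $c\in\{0,m\}$ and shifts $v\propto Am$. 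Under $[T,S]=0$ every operator appearing is a function of $TST$, so each $\det(I+SB)$ factors as $\det((I+r_{1}TST)(I+r_{2}TST))=\det(I+r_{1}TST)\det(I+r_{2}TST)$ with $r_{i}\in\{1,3\}$; in particular $(I+TST)(I+3TST)=\Sigma_{S}$ arises exactly from the cross terms $\bbE_{x\sim P}[\Phi_{k_{T}}P(x)\Phi_{k_{T}}Q(x)]$ and its mirror, producing the $\det(\Sigma_{S})^{-1/2}$ block, while the subtraction of $\MMD^{4}$ produces the $\det(I+2TST)^{-1}$ block.

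For $\bbE_{z,z'}[h(z,z')^{2}]$, I would exploit the identity $k_{T}(u,v)^{2}=k_{\sqrt{2}T}(u,v)$, so that the four purely-squared kernel terms $\bbE[k_{T}(x,x')^{2}]$ etc.\ are mean embeddings evaluated at $\sqrt{2}T$, producing the $\det(I+4TST)^{-1/2}$ class. The mixed products, such as $\bbE[k_{T}(x,x')k_{T}(x,y')]$, after integrating out $x'$ or $y'$ to form $\Phi_{k_{T}}P$ or $\Phi_{k_{T}}Q$, reduce to integrals of $\Phi_{k_{T}}P\cdot\Phi_{k_{T}}Q$ or $\Phi_{k_{T}}P\cdot\Phi_{k_{T}}P$ against $P$ or $Q$, handled again by the moment identity and yielding the remaining $\det(I+2TST)^{-1}$ and $\det(\Sigma_{S})^{-1/2}$ contributions. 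Grouping by symmetry class, summing, and subtracting $\MMD_{k_{T}}(P,Q)^{4}$ gives the stated formula for $\xi_{2}$.

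The main obstacle is combinatorial bookkeeping: each of $\xi_{1}$ and $\xi_{2}$ is a sum of many signed Gaussian integrals, and the exponential prefactors $e^{-\frac{c}{2}\langle\cdot Tm,Tm\rangle_{\calX}}$ produced by the $(v-Bc)$ and $\langle v,c\rangle$ terms in the moment identity depend sensitively on which of $\Phi_{k_{T}}P$ or $\Phi_{k_{T}}Q$ is paired with which of $P$ or $Q$ in each integral, making sign and coefficient tracking delicate. Commutativity of $T$ and $S$ is essential: it guarantees that every $(I+SB)^{-1}$ commutes with $TST$ and factors into products of $\det(I+rTST)$ for $r\in\{1,2,3,4\}$, producing the compact form involving $\Sigma_{S}$; without it the operator $(I+SB)^{-1}$ in the inner-product exponents cannot be simplified in terms of $TST$ and $Tm$ alone and the clean closed form in the statement would not emerge.
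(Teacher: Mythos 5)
Your strategy is essentially the one the paper uses: reduce $\xi_1$ and $\xi_2$ to signed sums of Gaussian integrals of products of mean embeddings, evaluate each by completing the square via a DaPrato-type Gaussian integral identity, handle the squared-kernel terms in $\xi_2$ through $k_T^2 = k_{\sqrt{2}T}$, and rely on $[T,S]=0$ to factor every determinant into products of $\det(I+rTST)$. The paper's only real structural difference is that it first proves a general version for $P=N_{a,S}$, $Q=N_{b,R}$ (Theorem \ref{thm:variance_of_estimator}) using the Sutherland expansion of the variances and then specializes to $S=R$, $a=0$, $b=m$; your expansion of $\xi_1=\bbE_z[g(z)^2]-\MMD_{k_T}(P,Q)^4$ with $g(z)=\Phi_{k_T}P(x)+\Phi_{k_T}Q(y)-\Phi_{k_T}Q(x)-\Phi_{k_T}P(y)$ is algebraically the same thing. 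Your identification of where the $\det(\Sigma_S)^{-1/2}$, $\det(I+2TST)^{-1}$ and $\det(I+4TST)^{-1/2}$ blocks come from matches the paper's computation of its three canonical integrals \eqref{eq:calc_1}--\eqref{eq:calc_3}.

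One concrete correction before the bookkeeping can proceed: your stated moment identity is wrong as written. Setting $B=0$ gives $\int e^{\langle v,u\rangle_{\calX}}dN_{c,S}(u)=e^{\langle v,c\rangle_{\calX}+\frac{1}{2}\langle Sv,v\rangle_{\calX}}$, whereas your right-hand side reduces to $e^{\frac{1}{2}\langle v,v\rangle_{\calX}+\langle v,c\rangle_{\calX}}$; the quadratic form in the exponent is missing a factor of the covariance and should read $\frac{1}{2}\langle S(I+BS)^{-1}(v-Bc),v-Bc\rangle_{\calX}$. With that fix the identity is exactly \citet[Proposition 1.2.8]{DaPrato2002}, which is the engine of the paper's proof, so this is a slip rather than a gap in the argument — but if applied literally it would corrupt every exponential prefactor $e^{-\frac{c}{2}\langle\cdot Tm,Tm\rangle_{\calX}}$ in the final formulas.
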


\subsection{Weak Convergence and MMD}\label{subsec:weak_convergence}
We know that MMD is a metric on $\calP$ when $k$ is characteristic, so it is natural to identify the topology it generates and in particular how it relates to the standard topology for elements of $\calP$, the weak topology. 

\begin{theorem}\label{thm:weak_convergence}
	Let $\calX$ be a Polish space, $k$ a bounded, continuous, characteristic kernel on $\calX\times\calX$ and $P\in\calP$ then $P_{n}\xrightarrow{w} P$ implies $\eMMD_{k}(P_{n},P)\rightarrow 0$ and if $\{P_{n}\}_{n=1}^{\infty}\subset\calP$ is tight then $\eMMD_{k}(P_{n},P)\rightarrow 0$ implies $P_{n}\xrightarrow{w} P$ where $\xrightarrow{w}$ denotes weak convergence.    
\end{theorem}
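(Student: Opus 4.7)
The statement has two implications, and I would treat them separately. My overall approach mirrors the strategy used in the Euclidean setting by Simon-Gabriel and Sch\"olkopf: the forward implication is an application of the Portmanteau theorem to the kernel trick representation of MMD, while the converse combines Prokhorov's theorem with the characteristic property via a subsequence argument. The Polish hypothesis on $\calX$ is there precisely so that Prokhorov's theorem is available in its usual ``tight $\Leftrightarrow$ relatively compact'' form.

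\textbf{Forward direction ($P_n \xrightarrow{w} P \Rightarrow \MMD_k(P_n,P)\to 0$).} I would start from \eqref{eq:MMD_integral_version}, writing
\begin{align*}
    \MMD_k(P_n,P)^2 &= \int\!\!\int k(x,x')\, dP_n(x)dP_n(x') + \int\!\!\int k(y,y')\, dP(y)dP(y') \\
    &\qquad - 2\int\!\!\int k(x,y)\, dP_n(x)dP(y).
\end{align*}
Because $P_n \xrightarrow{w} P$ on the Polish space $\calX$, the product measures satisfy $P_n \otimes P_n \xrightarrow{w} P\otimes P$ and $P_n\otimes P \xrightarrow{w} P \otimes P$ on the Polish space $\calX\times\calX$ (this is a standard consequence of weak convergence combined with the fact that $P$ is fixed). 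Since $k$ is bounded and continuous on $\calX\times\calX$, the Portmanteau theorem applies to each of the three integrals, yielding termwise convergence to the common limit $\int\!\!\int k\, dP\otimes dP$, so $\MMD_k(P_n,P)^2 \to 0$.

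\textbf{Converse direction (tight $+$ $\MMD_k(P_n,P)\to 0 \Rightarrow P_n \xrightarrow{w} P$).} I would argue by contradiction via subsequences. Suppose $P_n$ does not converge weakly to $P$; then there exists a subsequence $\{P_{n_j}\}$ and a bounded continuous $f$ with $\bigl|\int f\, dP_{n_j} - \int f\, dP\bigr|$ bounded away from zero. Because $\{P_n\}$ is tight, Prokhorov's theorem (available since $\calX$ is Polish) guarantees a further subsequence $\{P_{n_{j_\ell}}\}$ converging weakly to some $Q\in\calP$. By the forward direction just proved, $\MMD_k(P_{n_{j_\ell}},Q)\to 0$. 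Combining this with the hypothesis and the triangle inequality for the pseudo-metric $\MMD_k$,
\begin{equation*}
    \MMD_k(P,Q) \leq \MMD_k(P,P_{n_{j_\ell}}) + \MMD_k(P_{n_{j_\ell}},Q) \to 0,
\end{equation*}
so $\MMD_k(P,Q)=0$. Since $k$ is characteristic, $P=Q$, and therefore $P_{n_{j_\ell}}\xrightarrow{w} P$, contradicting the choice of the original subsequence. Hence $P_n\xrightarrow{w} P$.

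\textbf{Main obstacle.} The forward direction is essentially a bookkeeping exercise once the product measure weak convergence is in hand, so the only subtle point is verifying $P_n\otimes P_n\xrightarrow{w} P\otimes P$ cleanly in the non-metric Polish setting; in a metric Polish space this follows from Slutsky-type arguments or from the standard fact that weak convergence on Polish spaces is preserved under finite products. The more delicate step is the converse: the entire argument rests on invoking Prokhorov's theorem to extract a weakly convergent subsequence, which is where the Polish assumption on $\calX$ is indispensable. Without tightness or without the Polish structure, the subsequence extraction fails and $P_n$ could fail to converge weakly even while $\MMD_k(P_n,P)\to 0$, so I expect the hypothesis bookkeeping (tightness, Polish, characteristic, boundedness, continuity) to be the key conceptual content rather than any heavy computation.
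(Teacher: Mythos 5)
Your proposal is correct, but it proves both implications by a genuinely different route than the paper. For the forward direction the paper simply cites \citet[Lemma 10]{Simon-Gabriel2018}, noting that the underlying result of Berg et al.\ needs only a Hausdorff space; you instead give a self-contained argument from the closed form \eqref{eq:MMD_integral_version}, using that $P_n\otimes P_n\xrightarrow{w}P\otimes P$ and $P_n\otimes P\xrightarrow{w}P\otimes P$ on the separable metric space $\calX\times\calX$ and that $k$ is bounded and continuous. That is valid (the product-measure step is standard on separable metric spaces, e.g.\ Billingsley), and arguably more transparent, at the cost of having to justify the product convergence that you correctly flag as the one delicate point. For the converse the paper also invokes Prokhorov, but then finishes by observing that $\calH_{k}\subset C_b(\calX)$ is a separating class (since $k$ is characteristic) and that $\MMD_{k}(P_n,P)\rightarrow 0$ forces $\int F\,dP_n\rightarrow\int F\,dP$ for every $F\in\calH_k$ via $\lvert\int F\,d(P_n-P)\rvert\leq\norm{F}_{\calH_k(\calX)}\MMD_k(P_n,P)$, so that \citet[Lemma 3.4.3]{Ethier1986} yields weak convergence directly. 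You instead extract a weak limit $Q$ of a sub-subsequence, apply your forward direction to get $\MMD_k(P_{n_{j_\ell}},Q)\rightarrow 0$, and use the triangle inequality for the RKHS pseudo-metric plus characteristicness to conclude $P=Q$; the subsequence principle then gives the contradiction. Both converses are correct; yours is self-contained modulo Prokhorov and reuses the forward direction, while the paper's avoids the subsequence bookkeeping by outsourcing the final step to a separating-class lemma. No gaps.
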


For a discussion on weak convergence and tightness see \citet{Billingsley1971}. The tightness is used to compensate for the lack of compactness of $\calX$ which is often required in analogous finite dimensional results. In particular, in \citet{Chevyrev2018} an example where $\MMD_{k}(P_{n},P)\rightarrow 0$ but $P_{n}$ but does converge to $P$ was given without the assumption of tightness. A precise characterisation of the relationship between MMD and weak convergence over a Polish space is an open problem.

\section{Practical Considerations for Kernel Selection} \label{sec:implementation}

We now present examples and techniques to choose kernels and construct maps $T$ that are admissible. Two main categories will be discussed, integral operators induced by kernels and situation specific kernels. 

For the first catergory assume $\calX = \calY = L^{2}(\calD)$ for some compact $\calD\subset\bbR^{d}$ and let $k_{0}$ be a measurable kernel over $\calD\times\calD$ and set $T = C_{k_{0}}$ where $C_{k_{0}}$ is the covariance operator associated with $k_{0}$, see Section \ref{sec:inf_measures}. We call $k_{0}$ an \emph{admissible kernel}  if $C_{k_{0}}$ is admissible. If $k_{0}$ is continuous then by Mercer's theorem $C_{k_{0}}x = \sum_{n=1}^{\infty}\lambda_{n}\langle x,e_{n}\rangle e_{n}$ for some positive sequence $\{\lambda_{n}\}_{n=1}^{\infty}$ and orthonormal set $\{e_{n}\}_{n=1}^{\infty}$ \citep[Chapter 4.5]{Steinwart2008}. To be admissible $C_{k_{0}}$ needs to be injective which is equivalent to $\{e_{n}\}_{n=1}^{\infty}$ forming a basis \citep[Proof of Theorem 3.1]{Steinwart2012}. Call $k_{0}$ \emph{integrally strictly positive definite} (ISPD) if $\sloppy{\int_{\calD}\int_{\calD} x(s)k_{0}(s,t)x(t)dsdt > 0}$ for all non-zero $x\in\calX$. Recall that if $k_{0}$ is translation invariant then by Theorem \ref{thm:bochner} there exists a measure $\mu_{k_{0}}$ such that $\hat{\mu}_{k_{0}}(s-t)= k(s,t)$.

\begin{proposition}\label{prop:admissible_kernel}
	Let $\calD\subset\bbR^{d}$ be compact and $k_{0}$ a continuous kernel on $\calD$, if $k_{0}$ is ISPD then $k_{0}$ is admissible. In particular, if $k_{0}$ is continuous and translation invariant and $\mu_{k_{0}}$ has full support on $\calD$ then $k_{0}$ is admissible. 
\end{proposition}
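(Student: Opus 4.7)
The plan is to verify the three clauses of admissibility---Borel measurability, continuity, and injectivity---for the operator $T = C_{k_0}$, and then deduce the translation-invariant statement by reducing it to ISPD.

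For measurability and continuity I would invoke Mercer's theorem as discussed in Section \ref{sec:inf_measures}: since $k_0$ is continuous on the compact set $\calD\times\calD$ it is bounded, and so $C_{k_0}\colon L^2(\calD)\to L^2(\calD)$ is a bounded (in fact trace class) linear operator. Bounded linearity immediately yields continuity, which in turn yields Borel measurability. Injectivity is then the only non-trivial item, and it follows directly from the quadratic form: if $C_{k_0}x = 0$, Fubini gives
\begin{equation*}
0 = \langle C_{k_0}x, x\rangle_{L^2(\calD)} = \int_{\calD}\int_{\calD} x(s)\+ k_0(s,t)\+ x(t)\, ds\, dt,
\end{equation*}
and the ISPD hypothesis forces $x = 0$ in $L^2(\calD)$. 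This proves the first assertion.

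For the translation-invariant case it suffices to deduce ISPD from the spectral-measure hypothesis. Using Bochner's theorem (Theorem \ref{thm:bochner}) I would write $k_0(s,t) = \int_{\bbR^d} e^{i\omega^\top (s-t)} \, d\mu_{k_0}(\omega)$, extend an arbitrary $x \in L^2(\calD)$ by zero to all of $\bbR^d$, and compute by Fubini
\begin{equation*}
\int_{\calD}\int_{\calD} x(s)\+ k_0(s,t)\+ x(t)\, ds\, dt \;=\; \int_{\bbR^d} \lvert \widehat{x}(\omega)\rvert^2 \, d\mu_{k_0}(\omega),
\end{equation*}
where $\widehat{x}$ denotes the Fourier transform of the zero-extension. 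By the Paley--Wiener theorem $\widehat{x}$ extends to an entire function on $\bbC^d$; therefore whenever $x \neq 0$ the set $\{\widehat{x} \neq 0\}$ is a non-empty open subset of $\bbR^d$. Under the full-support hypothesis on $\mu_{k_0}$ every non-empty open set carries strictly positive $\mu_{k_0}$-mass, so the integral on the right is strictly positive, which is precisely ISPD.

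The main obstacle is the Paley--Wiener step, where one must argue that the Fourier transform of a non-zero compactly supported $L^2$ function cannot vanish on a non-empty open set---this uses analyticity of $\widehat{x}$ and the fact that the zero set of a non-trivial entire function has empty interior in $\bbR^d$. The remaining work is routine bookkeeping with Mercer, Bochner, and basic bounded-operator theory.
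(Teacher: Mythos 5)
Your argument is correct and, for the first assertion, identical to the paper's: injectivity of $C_{k_{0}}$ is obtained from the identity $\langle C_{k_{0}}x,x\rangle_{L^{2}(\calD)}=\int_{\calD}\int_{\calD}x(s)k_{0}(s,t)x(t)\,ds\,dt$ together with the ISPD hypothesis (the paper phrases it as a contrapositive, you as a direct contradiction; the measurability and continuity of the bounded operator $C_{k_{0}}$, which you spell out, are left implicit in the paper). The only divergence is in the second assertion: the paper simply cites \citet[Proposition 5]{Sriperumbudur2011} and \citet[Theorem 9]{Sriperumbudur2010} to pass from full support of $\mu_{k_{0}}$ to ISPD, whereas you reprove that implication from scratch via Bochner, Fubini, and the analyticity of the Fourier transform of a compactly supported function. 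Your argument is sound and is essentially the proof underlying the cited results; note only that the Paley--Wiener step is slightly more machinery than needed, since non-emptiness of the open set $\{\widehat{x}\neq 0\}$ already follows from continuity of $\widehat{x}$ and injectivity of the Fourier transform on $L^{1}(\bbR^{d})$, without invoking the entire extension. What the self-contained route buys is independence from the external references and a proof that visibly localises where full support of the spectral measure is used; what the citation buys is brevity.
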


For multiple examples of ISPD kernels see \citet{Sriperumbudur2011} and of $\mu_{k_{0}}$ see \citet{Sriperumbudur2010}. Using the product to convolution property of the Fourier transform one can construct $k_{0}$ such that $\mu_{k_{0}}$ has full support relatively easily or  modify standard integral operators which aren't admissible. For example, for some $F\in\bbN$ consider the kernel $k_{\text{cos}(F)}(s,t) =\nobreak \sum_{n=0}^{F-1}\cos(2\pi n(s-t))$ on $[0,1]^{2}$ whose spectral measure if a sum of Dirac measures so does not have full support. If the Dirac measures are convolved with a Gaussian then they would be smoothed out and would result in a measure with full support. Since convolution in the frequency domain corresponds to a product in space domain the new kernel $k_{\text{c-exp}(F,l)}(s,t) = e^{-\frac{1}{2 l^{2}}(s-t)^{2}}k_{\text{cos}(F)}(s,t)$ satisfies the conditions of Proposition \ref{prop:admissible_kernel}. This technique of frequency modification has found success in modelling audio signals \citep[Section 3.4]{Wilkinson2019}. In general, any operator of the form $Tx = \sum_{n=1}^{\infty}\lambda_{n}\langle x,e_{n}\rangle_{\calX} e_{n}$ for positive, bounded $\{\lambda_{n}\}_{n=1}^{\infty}$ and an orthonormal basis $\{e_{n}\}_{n=1}^{\infty}$ is admissible even if it is not induced by a kernel, for example the functional Mahalanobis distance \citep{Berrendero2020}.

The second category is scenario specific choices. By this we mean kernels whose structure is specified to the testing problem at hand. For example, while the kernel two-sample test may be applied for distributions with arbitrary difference one may tailor it for a specific testing problem, such a difference of covariance operator. If one does only wish to test for difference in covariance operator of the two probability measures then an appropriate kernel would be $k_{\text{cov}}(x,y) = \langle x,y\rangle_{\calX}^{2}$ which is not characteristic but $\MMD_{k_{\text{cov}}}(P,Q) = 0$ if and only if $P,Q$ have the same covariance operator. However, a practitioner may want a kernel which emphasises difference in covariance operator, due to prior knowledge regarding the data, while still being able to detect arbitrary difference, in case the difference is more complicated than initially thought. We now present examples of $T$ which do this. To emphasise higher order moments, let $\calX\subset L^{4}(\calD)$ and $\calY$ the direct sum of $L^{2}(\calD)$ with itself equipped with the norm $\norm{(x,x')}_{\calY}^{2} = \norm{x}_{L^{2}(\calD)}^{2} + \norm{x'}_{L^{2}(\calD)}^{2}$ and $T(x) = (x,x^{2})$. This map captures second order differences and first order differences individually, as opposed to the polynomial map which combines them. Alternatively, one might be in a scenario where the difference in the distributions is presumed to be in the lower frequencies. In this case a map of the form $T(x) = \sum_{n=1}^{F}\lambda_{n}\langle x,e_{n}\rangle_{\calX} e_{n}$ could be used for decreasing, positive $\lambda_{n}$ and some orthogonal $e_{n}$. This will not be characteristic, since $T$ only acts on $F$ frequencies, however if $F$ is picked large enough then good performance could still be obtained in practice. For example, $\lambda_{n},e_{n}$ could be calculated empirically from the function samples using functional principal component analysis and $F$ could be picked so that the components explain a certain percentage of total variance. See \citet{horvath2012inference} for a deeper discussion on functional principal components and its central role in functional data analysis. 

All of the choices outlined above have associated hyperparameters, for example if $T = C_{k_{0}}$ then hyperparameters of $k_{0}$ are hyperparameters of $T$ such as the bandwidth. It is outside the scope of this paper to investigate new methods to choose these parameters but we do believe it is important future work. Multiple methods for finite dimensional data have been proposed using the surrogrates for test power outlined in Section \ref{sec:scaling} \citep{Sutherland2016,Gretton2012optimal,Liu2020} which could have potential for use in the infinite dimensional scenario.

\section{Numerical Simulations} \label{sec:numerics}

In this section we perform numerical simulations on real and synthetic data to reinforce the theoretical results. Code is available at \url{https://github.com/georgewynne/Kernel-Functional-Data}.

\subsection{Power Scaling of Functional Data}\label{subsec:num_power_scaling}
Verification of the power scaling when performing the mean shift two-sample test using functional data, discussed in Section \ref{sec:scaling}, is performed. Specifically we perform the two-sample test using the SE-$I$ kernel with $x\sim\calGP(0,k_{l})$ and $y\sim\calGP(m,k_{l})$ where $m(t) = 0.05$ for $t\in[0,1]$ and $k_{l}(s,t) = e^{-\frac{1}{2l^{2}}(s-t)^{2}}$ with $50$ samples from each distribution. This is repeated $500$ times to calculate power with $1000$ permutations used in the bootstrap to simulate the null. The observation points are a uniform grid on $[0,1]$ with $N$ points, meaning $N$ will be the dimension of the observed discretised function vectors. The parameter $l$ dictates the dependency of the fluctations. Small $l$ means less dependency between the random function values so the covariance matrix is closer to the identity. When the random functions are $m$ with $\calN(0,1)$ i.i.d. corruption the corresponding value of $l$ is zero which essentially means $k_{l}(x,y) = \delta_{xy}$. In this case the scaling of power is expected to follow \eqref{eq:scaling_id} and grow asymptotically as $\sqrt{N}$. On the other hand if $l > 0$ the fluctuations within each random function are dependent and we expect scaling as \eqref{eq:scaling_ratio} which does not grow asymptotically with $N$. 

Figure \ref{fig:gp_scaling} confirms this theory showing that power increases with a finer observation mesh only when there is no dependence in the random functions values. We see some increase of power as the mesh gets finer for the case of small dependency however the rate of increase is much smaller than the i.i.d. setting. 

\begin{figure}[ht]
\centering
\includegraphics[width=10cm]{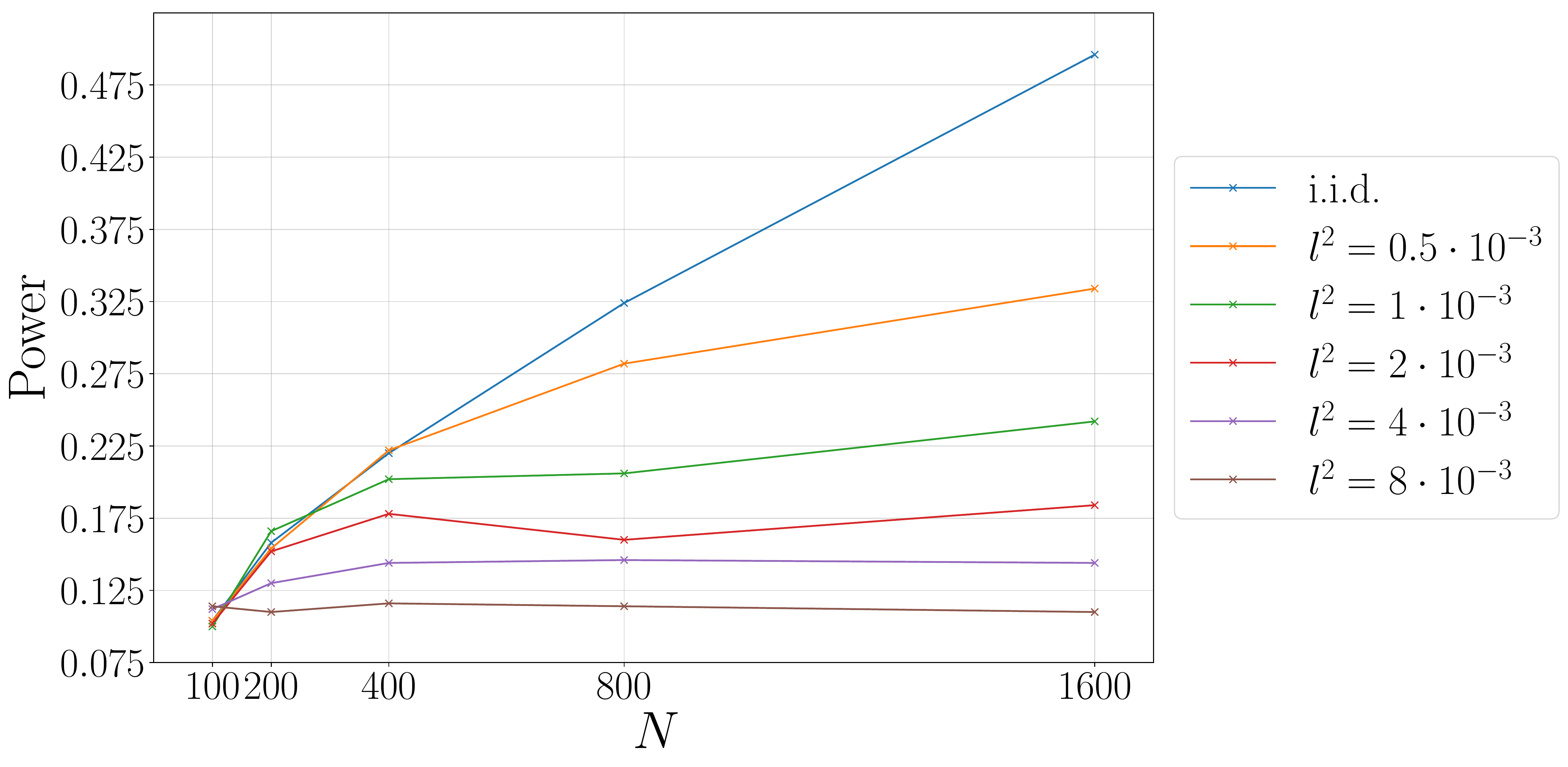}
\caption{Test power as mesh size decreases given different point dependency strengths}
\label{fig:gp_scaling}
\end{figure}

\subsection{Synthetic Data}

The tests are all performed using the $k_{\text{cov}}$ kernel from Section \ref{sec:implementation} and the SE-$T$ kernel for four different choices of $T$ and unless stated otherwise $\calY = L^{2}([0,1])$ and we use the short hand $L^{2}$ for $L^{2}([0,1])$ and $n_{x},n_{y}$ will denote the sample sizes of the two samples. To calculate power each test is repeated $500$ times and $1000$ permutations are used in the bootstrap to simulate the null distribution. 

COV will denote the $k_{\text{cov}}$ kernel, which can only detect difference of covariance operator. ID will denote $T = I$. CEXP will denote $T = C_{k_{0}}$ with $k_{0} = k_{\text{c-exp}(20,\sqrt{10})}$ the cosine exponential kernel. SQR will denote $T(x) = (x,x^{2})$ with $\calY$ the direct sum of $L^{2}([0,1])$ with itself as detailed in Section \ref{sec:implementation}. FPCA will denote $Tx = \sum_{n=1}^{F}\lambda_{n}^{1/2}\langle x,e_{n}\rangle e_{n}$ where $\lambda_{n},e_{n}$ are empirical functional principal components and principal values computed from the union of the two collections of samples with $F$ chosen such that $95\%$ of variance is explained. The abbreviations are summarised in Table \ref{fig:abbreviations} along with references to the other tests being compared against. 

For the four uses of the SE-$T$ kernel $\exp{(-\frac{1}{2\gamma^{2}}\norm{T(x)-T(y)}_{\calY}^{2})}$ we use, for all but SQR scenario, the median heuristic $\gamma^{2} = \text{Median}\big\{\norm{T(a)-T(b)}_{\calY}^{2}\colon a,b\in \{x_{i}\}_{i=1}^{n_{X}}\cup\{y_{i}\}_{i=1}^{n_{Y}}, a\neq b\big\}$. As the SQR scenario involves two norms in the exponent two calculations of median heuristic are needed so that the kernel used is $\exp(-\frac{1}{2\gamma_{1}^{2}}\norm{x-y}_{L^{2}}^{2} - \frac{1}{2\gamma_{2}^{2}}\norm{x^{2}-y^{2}}_{L^{2}}^{2})$ with $\gamma_{j}^{2} = \text{Median}\big\{\norm{a^{j}-b^{j}}_{L^{2}}^{2}\colon a,b\in \{x_{i}\}_{i=1}^{n_{X}}\cup\{y_{i}\}_{i=1}^{n_{Y}}, a\neq b\big\}$ for $j=1,2$.

\begin{table}
\begin{adjustwidth}{-1in}{-1in}
 \centering
  \begin{tabular}{ccc}
    \toprule
    Abbreviation & Description & Reference\\
    \midrule
    ID  & SE-$T$ kernel, $T = I$  & Section \ref{sec:kernel} \\
    FPCA  & SE-$T$ kernel, $T$ based on functional principle components & Section \ref{sec:implementation}  \\
    SQR  & SE-$T$ kernel, $T$ squaring feature expansion & Section \ref{sec:implementation}  \\
    CEXP  & SE-$T$ kernel, $T$ based on the cosine-exponential kernel  & Section \ref{sec:implementation}  \\
    COV  & Covariance kernel $k(x,y) = <x,y>_{\calX}^{2}$  & Section \ref{sec:implementation}  \\
    FAD  & Functional Anderson-Darling  & \citep{Pomann2016}  \\
    CVM  & Functional Cramer-von Mises  & \citep{Hall2007}  \\
    BOOT-HS  & Bootstrap Hilbert-Schmidt  & \citep{Paparoditis2016}  \\
    FPCA-$\chi$  & Functional Principal Component $\chi^{2}$  & \citep{FREMDT2012} \\ 
    \bottomrule
  \end{tabular}
  \end{adjustwidth}
  \caption{Summary of two-sample tests and kernels used in numerical experiments}
  \label{fig:abbreviations}
  
\end{table}

\subsubsection*{Difference of Mean}
We compare to the Functional Anderson-Darling (FAD) test in \citet{Pomann2016} which involves computing functional principal components and then doing multiple Anderson-Darling tests. Independent realisations $\{x_{i}\}_{i=1}^{n_{x}}$ and $\{y_{j}\}_{j=1}^{n_{y}}$ of the random functions $x,y$ over $[0,1]$ are observed on a grid of $100$ uniform points with $n_{x} = n_{y} = 100$ and observation noise $\calN(0,0.25)$. The two distributions are 
\begin{align*}
	x(t)& \sim t + \xi_{10}\sqrt{2}\sin(2\pi t) + \xi_{5}\sqrt{2}\cos(2\pi t), \\
	y(t)&\sim t+\delta t^{3} + \eta_{10}\sqrt{2}\sin(2\pi t) + \eta_{5}\sqrt{2}\cos(2\pi t),
\end{align*}
with $\xi_{5},\eta_{5}\stackrel{i.i.d}{\sim}\calN(0,5)$ and $\xi_{10},\eta_{10}\stackrel{i.i.d}{\sim}\calN(0,10)$. The $\delta$ parameter measures the deviation from the null hypothesis that $x,y$ have the same distribution. The range of the parameter is $\delta\in\{0,0.5,1,1.5,2\}$. 

Figure \ref{fig:mean_shift} shows CEXP performing best among all the choices which makes sense since this choice explicitly smooths the signal to make the mean more identifiable compared to the noise. We see that FPCA performs poorly because the principal components are deduced entirely from the covariance structure and do not represent the mean difference well. Likewise COV performs poorly since it can only detect difference in covariance, not mean. Except from FPCA and COV all choices of $T$ out perform the FAD method. This is most likely because the FAD method involves computing multiple principle components, an estimation which is inherently random, and computes multiple FAD tests with a Bonferroni correction which can cause too harsh a requirement for significance. There is a slight inflation of test size, meaning rejection is mildly larger than $5\%$ when the null hypothesis is true. 

Figure \ref{fig:ROC} shows an ROC curve. On the $x$-axis is $\alpha$ the false positive rate parameter in the test, see Section \ref{sec:RKHS}, and on the $y$-axis is the power of the test, meaning the true positive rate. The plot was obtained using $\delta = 1.25, n_{x} = 50, n_{y} = 50$ with the same observation locations and noise as described above. The dashed line is $y=x$ which corresponds to a test with trivial performance. We see that COV and FPCA performs trivially weakly implying the calculated principal components are uninformative for identifying the difference in mean. CEXP has the best curve and the other three choices of $T$ perform equally well.

\begin{figure}[ht]
\centering
\includegraphics[width=10cm]{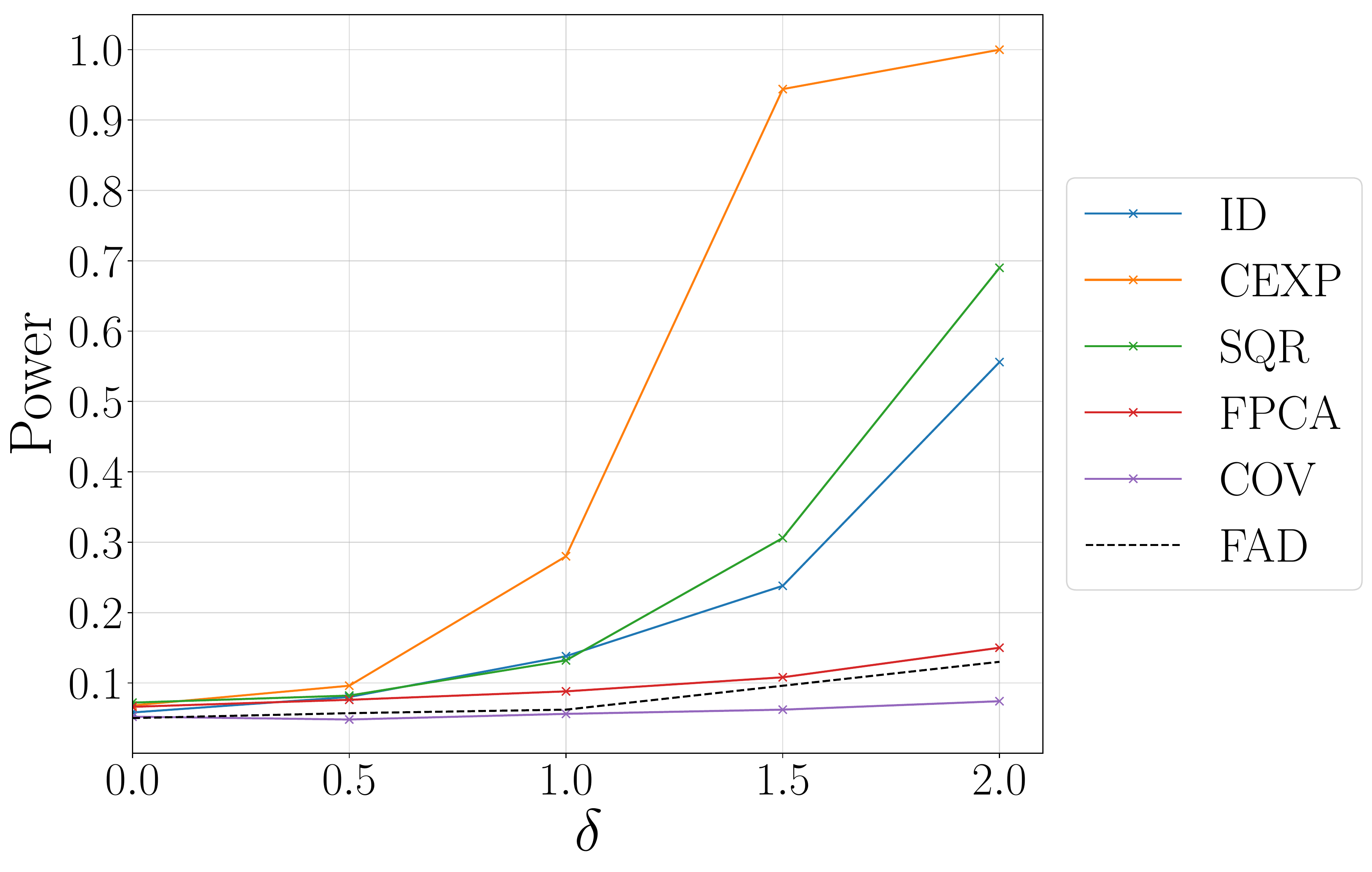}
\caption{Test power under mean difference for different kernels.}
\label{fig:mean_shift}
\end{figure}

\begin{figure}[ht]
\centering
\includegraphics[width=10cm]{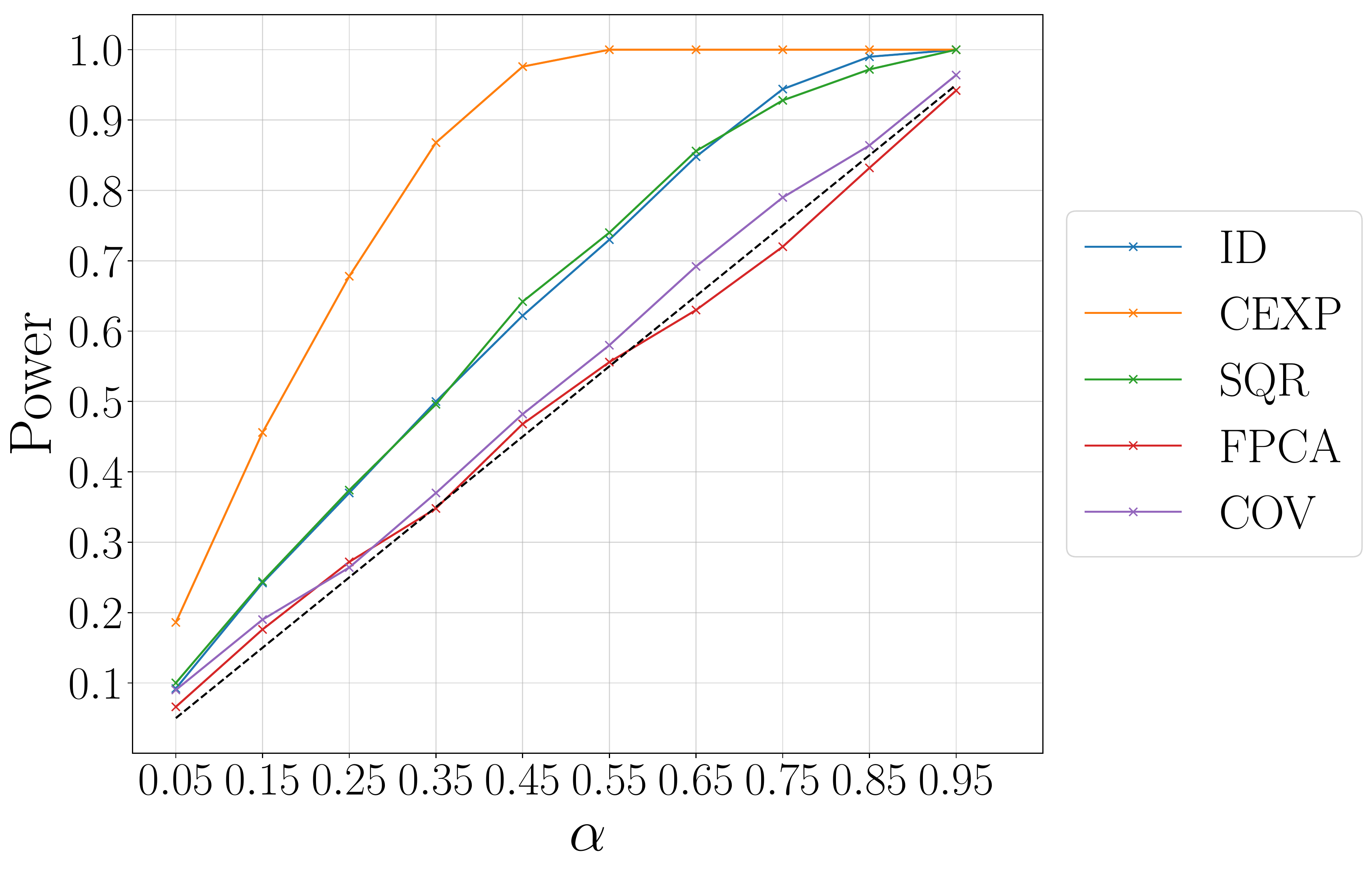}
\caption{ROC curve for different kernels.}
\label{fig:ROC}
\end{figure}

\subsubsection*{Difference of Variance}

We investigate two synthetic data sets, the first from \citet{Pomann2016} and the second from \citet{Paparoditis2016}. The first represents a difference in covariance in a specific frequency and the second a difference across all frequencies. 

In the first data set $n_{x} = n_{y} = 100$, observations are made on a uniform grid of $100$ points and the observation noise is $\calN(0,0.25)$. The two distributions are
\begin{align*}
	x(t)& \sim \xi_{10}\sqrt{2}\sin(2\pi t) + \xi_{5}\sqrt{2}\cos(2\pi t), \\
	y(t)&\sim \eta_{10+\delta}\sqrt{2}\sin(2\pi t) + \eta_{5}\sqrt{2}\cos(2\pi t),
\end{align*}
with $\xi_{5},\eta_{5}\stackrel{i.i.d}{\sim}\calN(0,5)$ and $\xi_{10}\sim\calN(0,10)$ and $\eta_{10+\delta}\sim\calN(0,10+\delta)$. Therefore the difference in covariance structure is manifested in the first frequency. The range of the parameter is $\delta\in\{0,5,10,15,20\}$ and we again compare against the FAD test.  

Figure \ref{fig:var_shift_1} shows that COV performs the best which is to be expected since it is specifically designed to only detect change in covariance. SQR and FPCA perform well since they are designed to capture covariance information too. CEXP performs almost identically to ID since it designed to improve performance on mean shift tests, not covariance shift. 

\begin{figure}[ht]
\centering
\includegraphics[width=10cm]{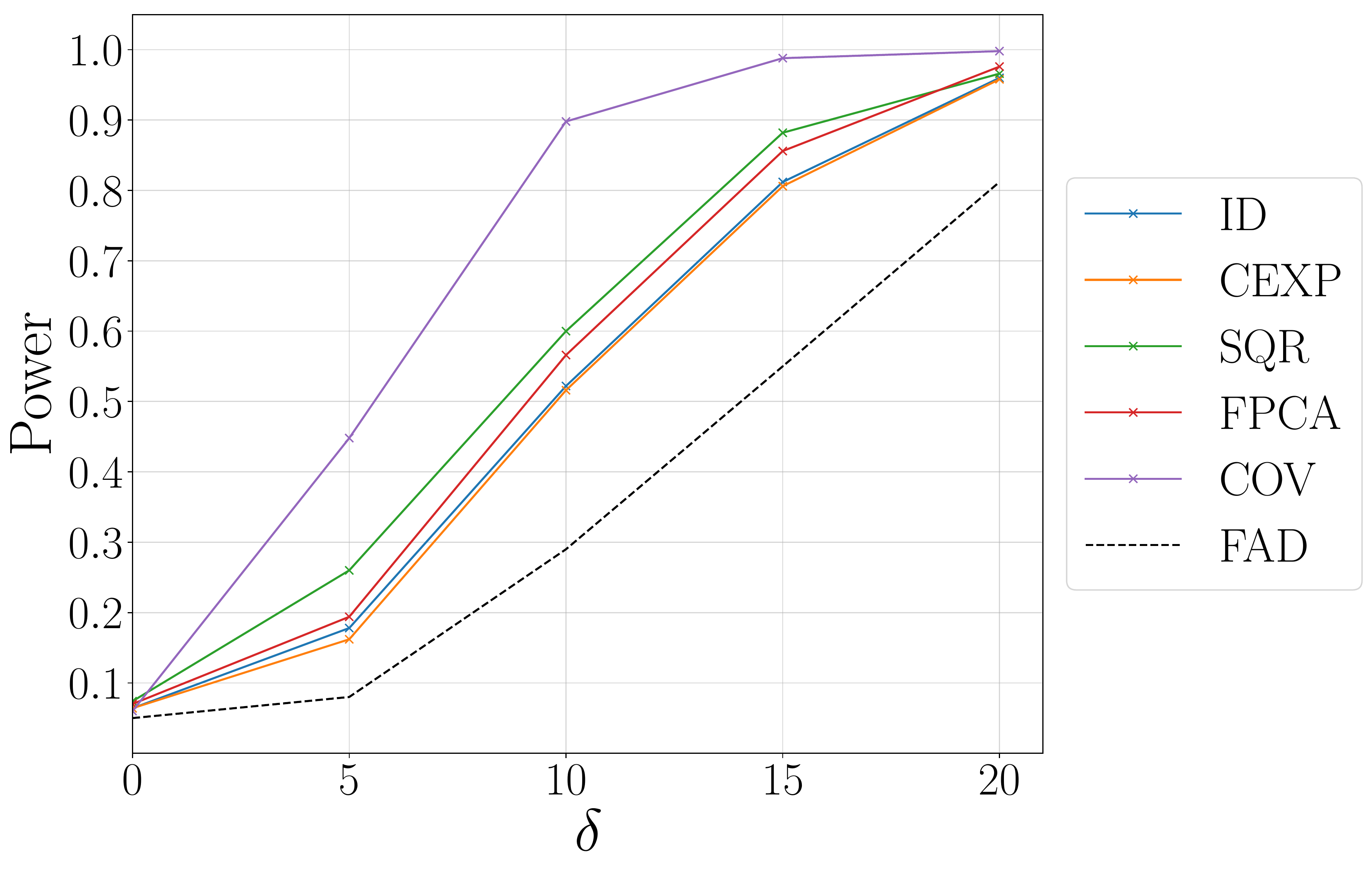}
\caption{Test power under variance difference in one frequency for different kernels.}
\label{fig:var_shift_1}
\end{figure}

The second dataset is from \citet{Paparoditis2016} and we compare against the data reported there of a bootstrap Hilbert-Schmidt norm (BOOT-HS) test \citep[Section 2.2]{Paparoditis2016} and a functional principal component chi-squared (FPCA-$\chi$) test \citep{FREMDT2012}, which is similar to the test in \citet{Panaretos2010}. The number of function samples is $n_{x} = n_{y} = 25$ and each sample is observed on a uniform grid over $[0,1]$ consisting of $500$ points. The first distribution is defined as
\begin{align*}
	x(t)\sim \sum_{n=1}^{10}\xi_{n}n^{-\frac{1}{2}}\sqrt{2}\sin(\pi n t) + \eta_{n}n^{-\frac{1}{2}}\sqrt{2}\cos(\pi n t),
\end{align*}
where $\xi_{n},\eta_{n}$ are i.i.d. Student's $t$-distribution random variables with $5$ degrees of freedom. For $\delta\in\bbR$ the other function distribution is $y\sim\delta x'$ where $x'$ is an i.i.d. copy of $x$. When $\delta = 1$ the two distributions are the same. The entire covariance structure of $Y$ is different from that of $X$ when $\delta\neq 1$ which is in contrast the previous numerical example where the covariance structure differed at only one frequency. The range of the deviation parameter is $\delta\in\{1,1.2,1.4,1.6,1.8,2\}$.

Figure \ref{fig:var_shift_2} shows again that COV and SQR performs the best. The BOOT-HS and FPCA-$\chi$ tests are both conservative, providing rejection rates below $5\%$ when the null is true as opposed to the kernel based tests which all lie at or very close to the $5\%$ level. 

\begin{figure}[ht]
\centering
\includegraphics[width=10cm]{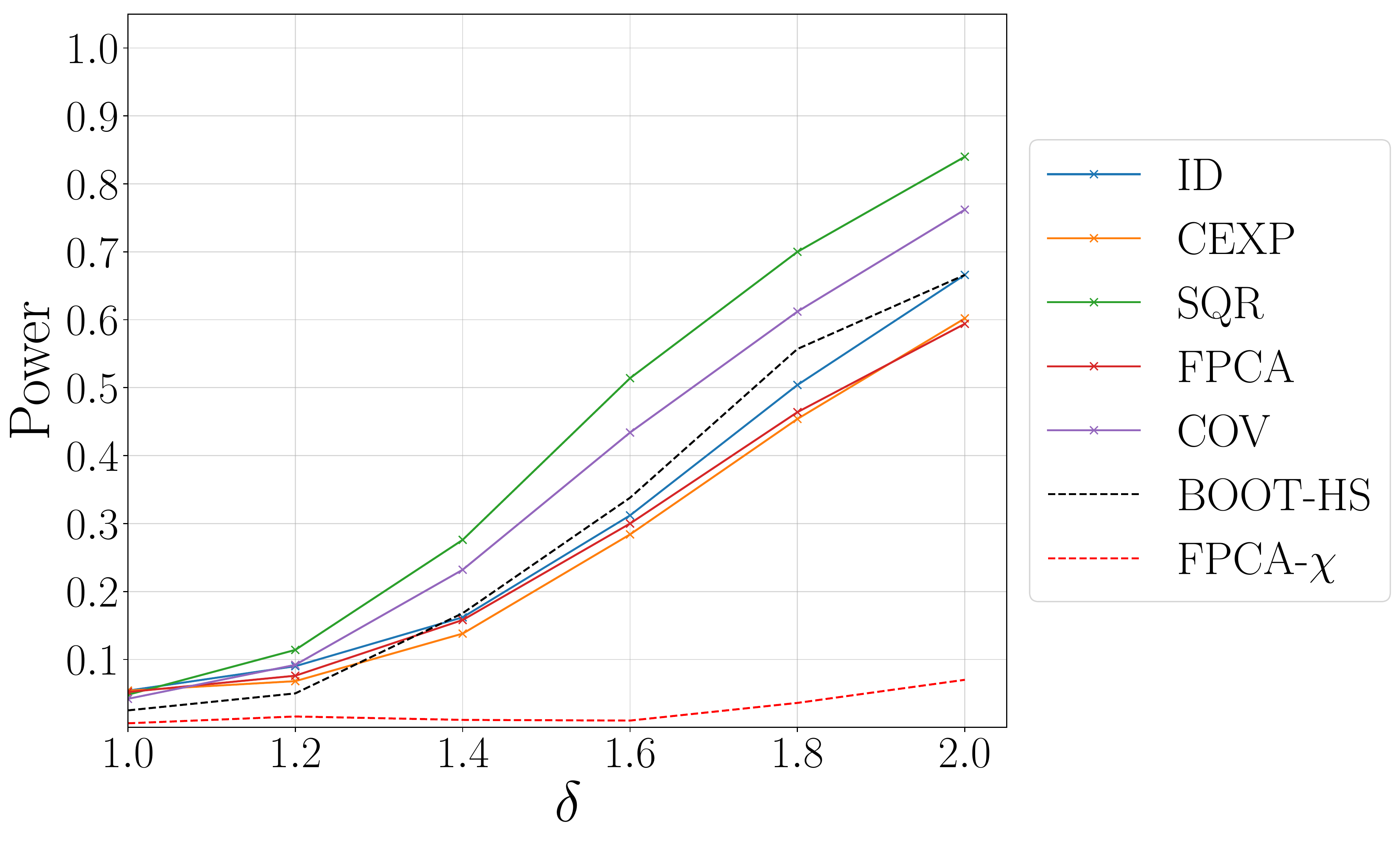}
\caption{Test power under variance difference across all frequencies for different kernels.}
\label{fig:var_shift_2}
\end{figure}

\subsubsection*{Difference of Higher Orders}

Data from \citet{Hall2007} is used when performing the test. The random functions $x,y$ are distributed as
\begin{align*}
	x(t) &\sim \sum_{n=1}^{15}e^{-\frac{n}{2}}\xi^{x}_{n}\psi_{n}(t),\\
	y(t) &\sim \sum_{n=1}^{15}e^{-\frac{n}{2}}\xi^{y}_{n,1}\psi_{n}(t) + \delta\sum_{n=1}^{15}n^{-2}\xi_{n,2}^{y}\psi_{n}^{*}(t),
\end{align*}
with $\xi^{x}_{n},\xi^{y}_{n,1},\xi^{y}_{n,2}\stackrel{i.i.d}{\sim}\calN(0,1)$, $\psi_{1}(t) = 1$, $\psi_{n}(t) = \sqrt{2}\sin((k-1)\pi t)$ for $n > 1$ and $\psi_{1}^{*}(t) = 1$, $\psi_{n}^{*}(t) = \sqrt{2}\cos((k-1)\pi(2t-1))$ if $n > 1$ is even, $\psi_{n}^{*}(t) = \sqrt{2}\sin((k-1)\pi(2t-1))$ if $n > 1$ is odd. The observation noise for $x$ is $\calN(0,0.01)$ and for y is $\calN(0,0.09)$. The range of the parameter is $\delta\in\{0,1,2,3,4\}$ and we compare against the FAD test and the Cramer-von Mises test in \citet{Hall2007}. The number of samples is $n_{x} = n_{y} = 15$ and for each random function $20$ observation locations are sampled randomly according to $p_{x}$ or $p_{y}$ with $p_{x}$ being the uniform distribution on $[0,1]$ and $p_{y}$ the distribution with density function $0.8 + 0.4t$ on $[0,1]$. 

Since the data is noisy and irregularly sampled, curves were fit to the data before the test was performed. The posterior mean of a Gaussian process with noise parameter $\sigma^{2} = 0.01$ was fit to each data sample using a Mat\'ern-$1.5$ kernel $k_{\text{Mat}}(s,t) = (1+\sqrt{3}(s-t))e^{-\sqrt{3}(s-t)}$.

Figure \ref{fig:hall} shows that the COV, SQR perform the best with other choices of $T$ performing equally. Good power is still obtained against the existing methods despite the function reconstructions, validating the theoretical results of Section \ref{sec:inf_dim_MMD}.

\begin{figure}[ht]
\centering
\includegraphics[width=10cm]{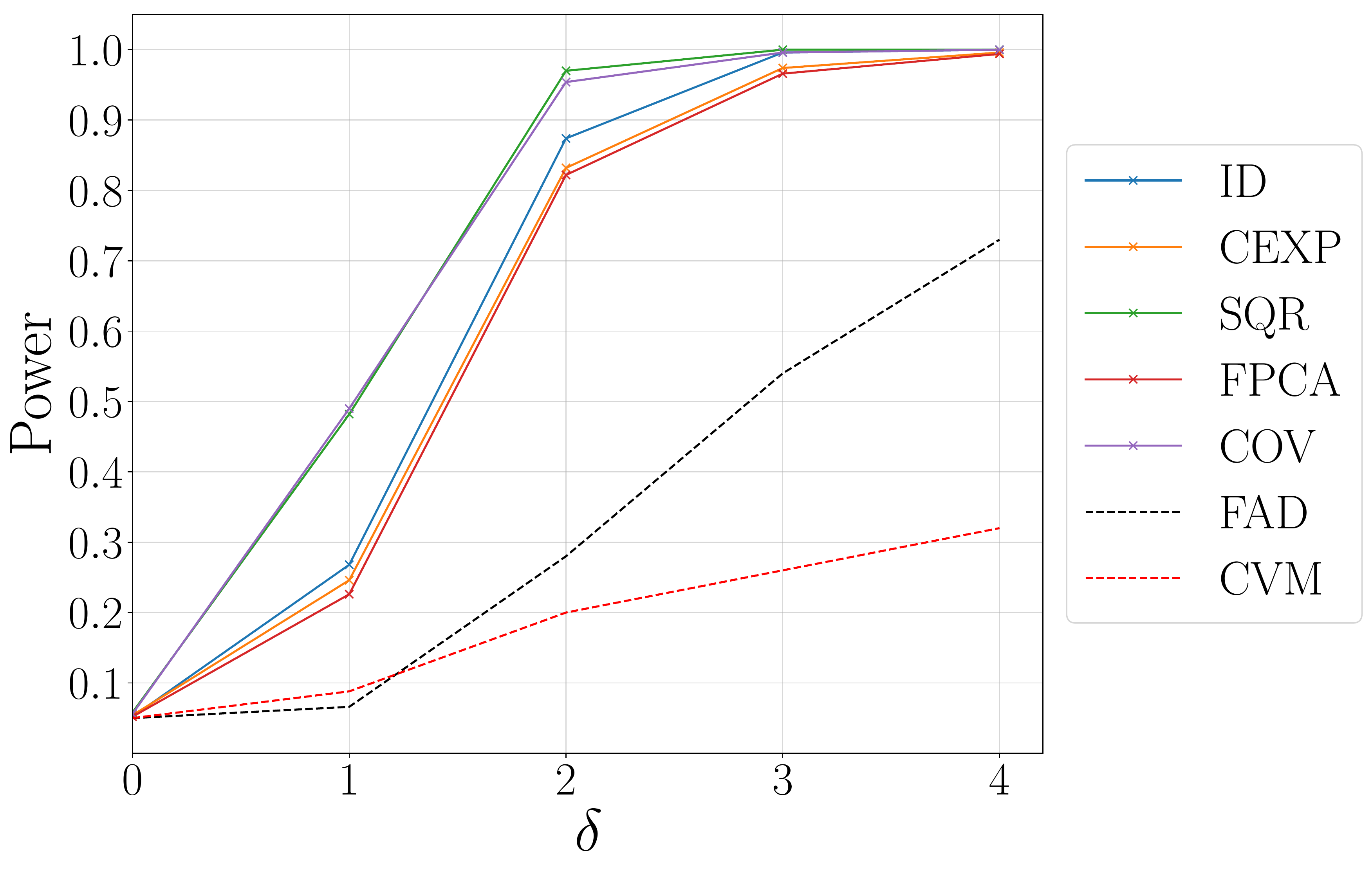}
\caption{Test power under difference of higher orders for different kernels.}
\label{fig:hall}
\end{figure}

\subsection{Real Data}

\subsubsection*{Berkeley Growth Data}
We now perform tests on the Berkeley growth dataset which contains the height of $39$ male and $54$ female children from age $1$ to $18$ and $31$ locations. The data can be found in the R package fda. We perform the two sample test on this data for the five different choices of $T$ with $\gamma$ chosen via the median heuristic outlined in the previous subsection. To identify the effect on test performance of sample size we perform random subsampling of the datasets and repeat the test to calculate test power. For each sample size $M\in\{5,15,25,35\}$ we sample $M$ functions from each data set and perform the test, this is repeated $500$ times to calculate test power. The results are plotted in Figure \ref{fig:berkeley}. Similarly, to investigate the size of the test we sample two disjoint subsets of size $M\in\{5,15,25\}$ from the female data set and perform the test and record whether the null was incorrectly rejected, this is repeated $500$ times to obtain a rate of incorrect rejection of the null, the results are reported in Table \ref{fig:berkeley_null}. 

Figure \ref{fig:berkeley} shows ID, SQR performing the best, COV performs weaker than CEXP suggesting that it is not just a difference of covariance operator that distinguishes the two samples. Table \ref{fig:berkeley_null} shows nearly all the tests have the correct empirical size.

\begin{figure}[ht]
\centering
\includegraphics[width=10cm]{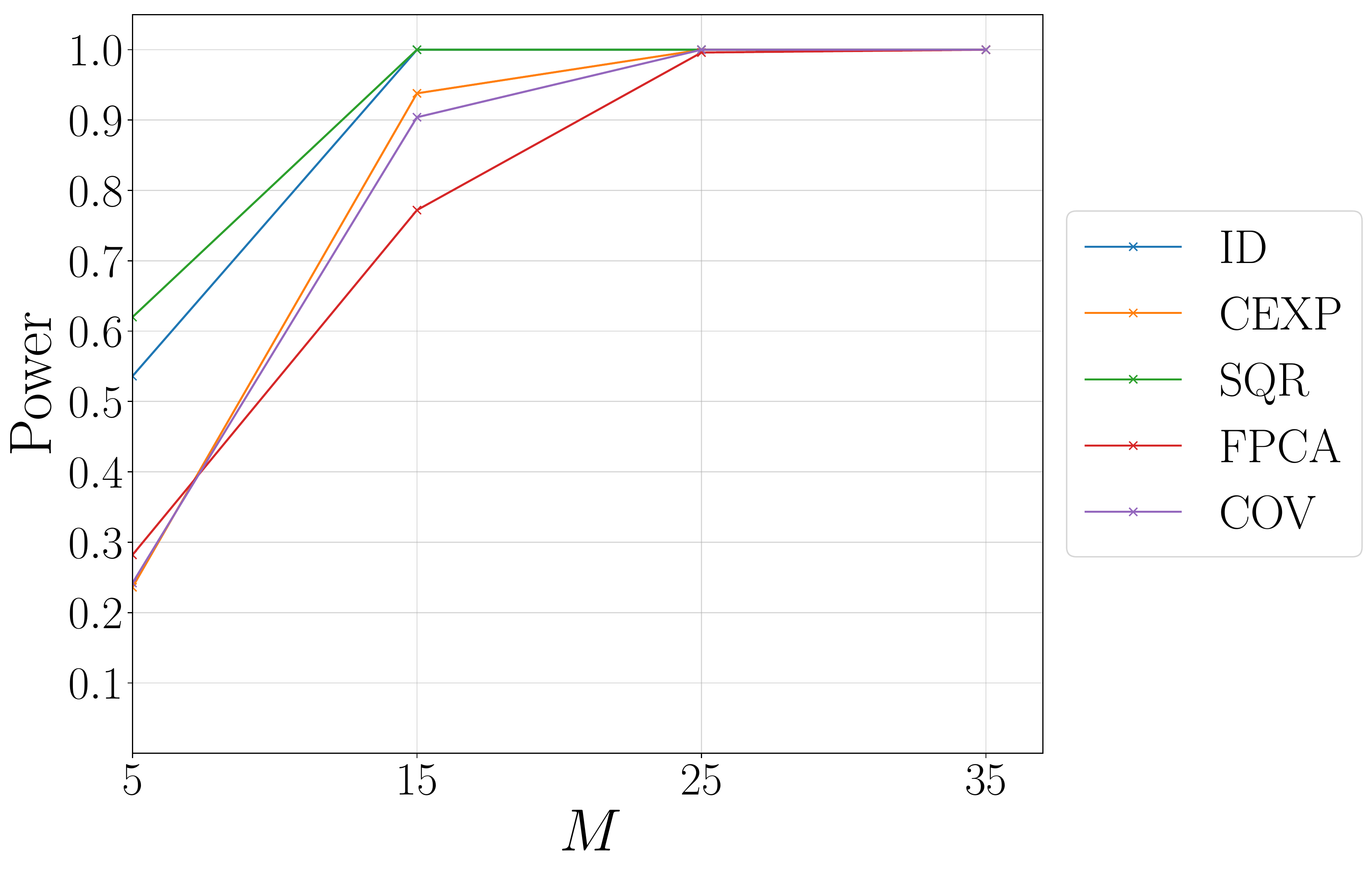}
\caption{Test power under subsamples of size $M$ using Berkeley growth data.}
\label{fig:berkeley}
\end{figure}

\begin{table}
\centering
  \begin{tabular}{l|ccccc}
    \toprule
    M & ID & CEXP & COV & SQR & FPCA \\
    \midrule
    5  & 5.0  & 4.8 &  4.2 &  5.6 & 5.0 \\
    15  & 4.4 & 4.6 &  4.4 &  5.2 &  4.6 \\
    25  & 5.0  & 4.6 &  5.4 &  5.8 &  5.6 \\
    \bottomrule
  \end{tabular}
  \caption{Empirical size, meaning the rate of rejection of the null when the null is true, of the two-sample test performed on the female Berkeley growth data for different sample sizes across different choices of $T$. The values are written as percentages, a value above $5$ shows too frequent rejection and below shows too conservative a test.}
  \label{fig:berkeley_null}
\end{table}

\subsubsection*{NEU Steel Data}
We perform the two-sample test on two classes from the North Eastern University steel defect dataset \citep{Song2013,He2020,Dong2019}. The dataset consists of $200\times 200$ pixel grey scale images of defects of steel surfaces with $6$ different classes of defects and $300$ images in each class. We perform the test on the two classes which are most visually similar, called rolled-in scale and crazing. See the URL \citep{SongURL} for further description of the dataset. For each sample size $M\in\{10,20,30,40\}$ we sample $M$ images from each class and perform the test, this is repeated $500$ times to calculate test power. Again we assess the empirical size by sampling two distinct subsets from one class, the rolled-in class, for sample sizes $M\in\{10,20,30,40\}$ and repeat this $500$ times and report the rate of incorrect null rejection. For CEXP we use the two dimensional tensor product kernel induced by the CEXP kernel with $20$ frequencies and $l = \sqrt{10}/200$ to normalise the size of the images.

Figure \ref{fig:steel} shows SQR having the best performance, CEXP performs well and so does ID. Table \ref{fig:steel_null} shows that the empirical size is inflated under some choices of $T$ especially CEXP. Once the test is performed with $40$ samples the sizes return to an acceptable level for SQR and FPCA. This inflation of empirical size should be taken into account when viewing the powers of the tests.   

\begin{figure}[ht]
\centering
\includegraphics[width=10cm]{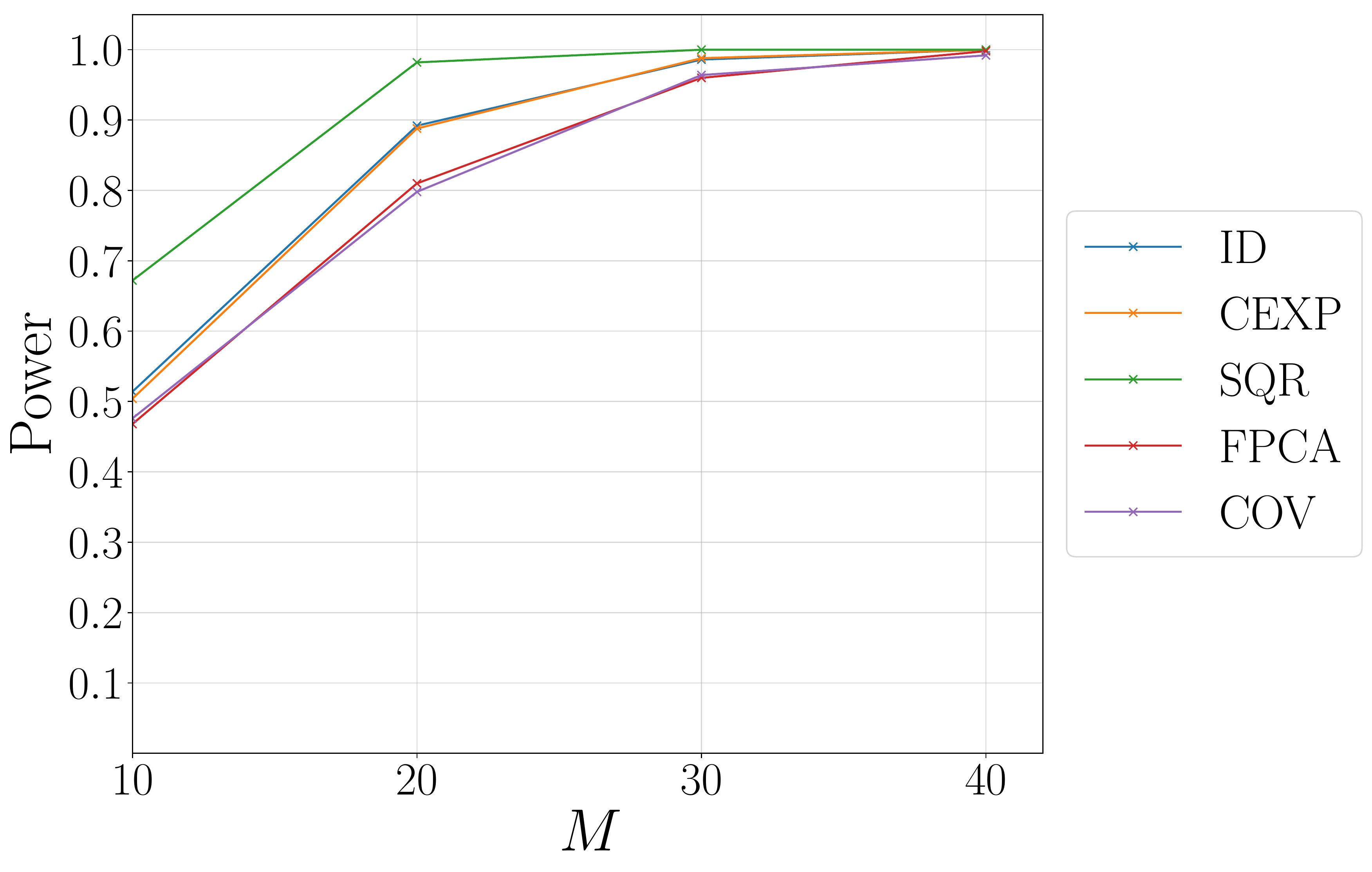}
\caption{Test power under subsamples of size $M$ using NEU steel data.}
\label{fig:steel}
\end{figure}

\begin{table}
\centering
  \begin{tabular}{l|ccccc}
     \toprule
    M & ID & CEXP & COV & SQR & FPCA \\
    \midrule
    10 & 4.2 & 4.2 & 5.0 & 5.8 & 5.8 \\
    20 & 4.8 & 4.8 & 6.6 & 5.8 & 6.2 \\
    30 & 5.2 & 6.4 & 6.0 & 6.2 & 6.6 \\
    40 & 6.4 & 7.0 & 5.2 & 4.4 & 4.8 \\
    \bottomrule
  \end{tabular}
  \caption{Empirical size, meaning the rate of rejection of the null when the null is true, of the two-sample test performed on the rolled-in scale class from the NEU steel defect data, for different sample sizes across different choices of $T$. The values are written as percentages, a value above $5$ shows too frequent rejection and below shows too conservative a test.}
  \label{fig:steel_null}
\end{table}

\section{Conclusion}
\label{sec:conclusion}

In this paper we studied properties of kernels on real, separable Hilbert spaces, and the associated Maximum Mean Discrepancy distance.   Based on this, we formulated a novel kernel-based two-sample testing procedure for functional data. The development of kernels on Hilbert spaces was motivated by the observation that certain scaling of kernel parameters in the finite dimensional regime can result in a kernel over a Hilbert space. Indeed, multiple theoretical properties emerged as natural infinite dimensional generalisations of the finite dimensional case. The development of kernels defined directly over Hilbert spaces facilitates the use of hyperparameters adapted for functional data, such as the choice of $T$ in the SE-$T$ kernel, which can result in greater test power.

While other nonparametric two-sample tests for functional data have been proposed recently, we believe that kernel-based approaches offer unique advantages.  In particular, the ability to choose the kernel to reflect \emph{a priori} knowledge about the data, such as any underlying  dependencies, or to emphasise at which spatial scales the comparison should be made between the samples can be of significant benefit to practitioners.   The construction of kernels which are tailor-made for two-sample testing of specific forms of functional data, for example time series and spatial data, is an interesting and open question, which we shall defer to future work.  

The theory of kernels on function spaces is of independent interest and our work highlights how existing results on probability measures on infinite dimensional spaces can be applied to kernel methods, for example the use of the Minlos-Sazanov theorem when proving characteristicness.  Recent theoretical developments relating to  kernels and MMD on general topological spaces in the absence of compactness or local compactness have revealed the challenges in establishing important properties of such metrics, for example determining weak convergence of sequences of probability measures \citep{Simon-Gabriel2018, simon2020metrizing, Chevyrev2018}, which have important implications for the development of effective MMD-based tests for functional data. 

 A further application of kernels on function spaces is statistical learning of maps between function spaces, in particular, the challenge of learning surrogate models for large-scale PDE systems which can be viewed as a nonlinear deterministic maps from an input function space, initial or boundary data, to an output function space, a system response.  Here there are fundamental challenges to be addressed relating to the universality properties of such kernels.   Preliminary work in \citet{nelsen2020random} indicates that this is a promising direction of research.

\subsection*{Acknowledgments}
GW was supported by an EPSRC Industrial CASE award [18000171] in partnership with Shell UK Ltd.  AD was supported by the Lloyds Register Foundation Programme on Data Centric Engineering and by The Alan Turing Institute under the EPSRC grant [EP/N510129/1]. We thank Sebastian Vollmer for helpful comments.

{\scriptsize
\setlength{\bibsep}{0.2pt}
\bibliographystyle{abbrvnat}
\bibliography{inf_dim_refs.bib}

\begin{thebibliography}{90}
\providecommand{\natexlab}[1]{#1}
\providecommand{\url}[1]{\texttt{#1}}
\expandafter\ifx\csname urlstyle\endcsname\relax
  \providecommand{\doi}[1]{doi: #1}\else
  \providecommand{\doi}{doi: \begingroup \urlstyle{rm}\Url}\fi

\bibitem[Albeverio and Mazzucchi(2015)]{Albeverio2015}
S.~Albeverio and S.~Mazzucchi.
\newblock An introduction to infinite-dimensional oscillatory and probabilistic
  integrals.
\newblock In \emph{Stochastic Analysis: A Series of Lectures}, pages 1--54.
  Springer Basel, 2015.

\bibitem[Aue et~al.(2018)Aue, Rice, and S{\"o}nmez]{aue2018detecting}
A.~Aue, G.~Rice, and O.~S{\"o}nmez.
\newblock Detecting and dating structural breaks in functional data without
  dimension reduction.
\newblock \emph{Journal of the Royal Statistical Society: Series B (Statistical
  Methodology)}, 80\penalty0 (3):\penalty0 509--529, 2018.

\bibitem[Bach(2017)]{bach2017equivalence}
F.~Bach.
\newblock On the equivalence between kernel quadrature rules and random feature
  expansions.
\newblock \emph{The Journal of Machine Learning Research}, 18\penalty0
  (1):\penalty0 714--751, 2017.

\bibitem[Benko et~al.(2009)Benko, H{\"a}rdle, and Kneip]{benko2009common}
M.~Benko, W.~H{\"a}rdle, and A.~Kneip.
\newblock Common functional principal components.
\newblock \emph{The Annals of Statistics}, 37\penalty0 (1):\penalty0 1--34,
  2009.

\bibitem[Berg et~al.(1984)Berg, Christensen, and Ressel]{Berg1984}
C.~Berg, J.~P.~R. Christensen, and P.~Ressel.
\newblock \emph{Harmonic Analysis on Semigroups}.
\newblock Springer New York, 1984.

\bibitem[Berlinet and Thomas-Agnan(2004)]{Berlinet2004}
A.~Berlinet and C.~Thomas-Agnan.
\newblock \emph{Reproducing Kernel Hilbert Spaces in Probability and
  Statistics}.
\newblock Springer {US}, 2004.

\bibitem[Berrendero et~al.(2020)Berrendero, Bueno-Larraz, and
  Cuevas]{Berrendero2020}
J.~R. Berrendero, B.~Bueno-Larraz, and A.~Cuevas.
\newblock On {Mahalanobis} distance in functional settings.
\newblock \emph{Journal of Machine Learning Research}, 21\penalty0
  (9):\penalty0 1--33, 2020.

\bibitem[Billingsley(1971)]{Billingsley1971}
P.~Billingsley.
\newblock \emph{Weak Convergence of Measures}.
\newblock Society for Industrial and Applied Mathematics, 1971.

\bibitem[Borgwardt et~al.(2006)Borgwardt, Gretton, Rasch, Kriegel, Scholkopf,
  and Smola]{Borgwardt2006}
K.~M. Borgwardt, A.~Gretton, M.~J. Rasch, H.-P. Kriegel, B.~Scholkopf, and
  A.~J. Smola.
\newblock Integrating structured biological data by kernel maximum mean
  discrepancy.
\newblock \emph{Bioinformatics}, 22\penalty0 (14):\penalty0 49--57, 2006.

\bibitem[Bucchia and Wendler(2017)]{bucchia2017change}
B.~Bucchia and M.~Wendler.
\newblock Change-point detection and bootstrap for {Hilbert} space valued
  random fields.
\newblock \emph{Journal of Multivariate Analysis}, 155:\penalty0 344--368,
  2017.

\bibitem[Cabana et~al.(2017)Cabana, Estrada, Pena, and
  Quiroz]{cabana2017permutation}
A.~Cabana, A.~M. Estrada, J.~Pena, and A.~J. Quiroz.
\newblock Permutation tests in the two-sample problem for functional data.
\newblock In \emph{Functional Statistics and Related Fields}, pages 77--85.
  Springer, 2017.

\bibitem[Carmeli et~al.(2010)Carmeli, de~Vito, Toigo, and
  Umanit{\`{a}}]{CARMELI2010}
C.~Carmeli, E.~de~Vito, A.~Toigo, and V.~Umanit{\`{a}}.
\newblock Vector valued reproducing kernel hilbert spaces and universality.
\newblock \emph{Analysis and Applications}, 08\penalty0 (01):\penalty0 19--61,
  Jan. 2010.

\bibitem[Chakraborty and Zhang(2019)]{chakraborty2019new}
S.~Chakraborty and X.~Zhang.
\newblock A new framework for distance and kernel-based metrics in high
  dimensions.
\newblock \emph{arXiv:1909.13469}, 2019.

\bibitem[Chen et~al.(2014)Chen, Reiss, and Tarpey]{chen2014optimally}
H.~Chen, P.~T. Reiss, and T.~Tarpey.
\newblock Optimally weighted {L2} distance for functional data.
\newblock \emph{Biometrics}, 70\penalty0 (3):\penalty0 516--525, 2014.

\bibitem[Chevyrev and Oberhauser(2018)]{Chevyrev2018}
I.~Chevyrev and H.~Oberhauser.
\newblock Signature moments to characterize laws of stochastic processes.
\newblock \emph{arXiv:1810.10971}, 2018.

\bibitem[Christmann and Steinwart(2010)]{Christmann2010}
A.~Christmann and I.~Steinwart.
\newblock Universal kernels on non-standard input spaces.
\newblock \emph{Advances in Neural Information Processing Systems 23}, pages
  406--414, 2010.

\bibitem[Cuevas(2014)]{cuevas2014partial}
A.~Cuevas.
\newblock A partial overview of the theory of statistics with functional data.
\newblock \emph{Journal of Statistical Planning and Inference}, 147:\penalty0
  1--23, 2014.

\bibitem[{Da Prato}(2006)]{DaPrato2006}
G.~{Da Prato}.
\newblock \emph{An Introduction to Infinite-Dimensional Analysis}.
\newblock Springer Berlin Heidelberg, 2006.

\bibitem[{Da Prato} and Zabczyk(2002)]{DaPrato2002}
G.~{Da Prato} and J.~Zabczyk.
\newblock \emph{Second Order Partial Differential Equations in Hilbert Spaces}.
\newblock Cambridge University Press, 2002.

\bibitem[Dong et~al.(2019)Dong, Song, He, Xu, Yan, and Meng]{Dong2019}
H.~Dong, K.~Song, Y.~He, J.~Xu, Y.~Yan, and Q.~Meng.
\newblock {PGA-Net:} pyramid feature fusion and global context attention
  network for automated surface defect detection.
\newblock \emph{{IEEE} Transactions on Industrial Informatics}, 2019.

\bibitem[Ethier and Kurtz(1986)]{Ethier1986}
S.~N. Ethier and T.~G. Kurtz, editors.
\newblock \emph{Markov Processes}.
\newblock John Wiley {\&} Sons, Inc., 1986.

\bibitem[Fasshauer and McCourt(2014)]{Fasshauer2014}
G.~Fasshauer and M.~McCourt.
\newblock \emph{Kernel-based Approximation Methods using {MATLAB}}.
\newblock World Scientific, June 2014.

\bibitem[Ferraty and Vieu(2003)]{ferraty2003curves}
F.~Ferraty and P.~Vieu.
\newblock Curves discrimination: a nonparametric functional approach.
\newblock \emph{Computational Statistics \& Data Analysis}, 44\penalty0
  (1-2):\penalty0 161--173, 2003.

\bibitem[Fremdt et~al.(2012)Fremdt, Steinbach, Horv{\'{a}}th, and
  Kokoszka]{FREMDT2012}
S.~Fremdt, J.~G. Steinbach, L.~Horv{\'{a}}th, and P.~Kokoszka.
\newblock Testing the equality of covariance operators in functional samples.
\newblock \emph{Scandinavian Journal of Statistics}, 40\penalty0 (1):\penalty0
  138--152, 2012.

\bibitem[Friedman and Rafsky(1979)]{friedman1979multivariate}
J.~H. Friedman and L.~C. Rafsky.
\newblock Multivariate generalizations of the {Wald-Wolfowitz} and {Smirnov}
  two-sample tests.
\newblock \emph{The Annals of Statistics}, pages 697--717, 1979.

\bibitem[G\"{a}rtner(2003)]{Grtner2003}
T.~G\"{a}rtner.
\newblock A survey of kernels for structured data.
\newblock \emph{{ACM} {SIGKDD} Explorations Newsletter}, 5\penalty0
  (1):\penalty0 49, 2003.

\bibitem[Gretton et~al.(2007)Gretton, Borgwardt, Rasch, Sch\"{o}lkopf, and
  Smola]{Gretton2007}
A.~Gretton, K.~Borgwardt, M.~Rasch, B.~Sch\"{o}lkopf, and A.~J. Smola.
\newblock A kernel method for the two-sample-problem.
\newblock \emph{Advances in Neural Information Processing Systems 19}, pages
  513--520, 2007.

\bibitem[Gretton et~al.(2012{\natexlab{a}})Gretton, Borgwardt, Rasch,
  Sch\"{o}lkopf, and Smola]{Gretton2012}
A.~Gretton, K.~M. Borgwardt, M.~J. Rasch, B.~Sch\"{o}lkopf, and A.~Smola.
\newblock A kernel two-sample test.
\newblock \emph{Journal of Machine Learning Research}, 13\penalty0
  (1):\penalty0 723--773, 2012{\natexlab{a}}.

\bibitem[Gretton et~al.(2012{\natexlab{b}})Gretton, Sejdinovic, Strathmann,
  Balakrishnan, Pontil, Fukumizu, and Sriperumbudur]{Gretton2012optimal}
A.~Gretton, D.~Sejdinovic, H.~Strathmann, S.~Balakrishnan, M.~Pontil,
  K.~Fukumizu, and B.~K. Sriperumbudur.
\newblock Optimal kernel choice for large-scale two-sample tests.
\newblock \emph{Advances in Neural Information Processing Systems 25}, pages
  1205--1213, 2012{\natexlab{b}}.

\bibitem[Hall and Keilegom(2007)]{Hall2007}
P.~Hall and I.~V. Keilegom.
\newblock Two-sample tests in functional data analysis starting from discrete
  data.
\newblock \emph{Statistica Sinica}, 17\penalty0 (4):\penalty0 1511--1531, 2007.
\newblock ISSN 10170405, 19968507.

\bibitem[He et~al.(2020)He, Song, Meng, and Yan]{He2020}
Y.~He, K.~Song, Q.~Meng, and Y.~Yan.
\newblock An end-to-end steel surface defect detection approach via fusing
  multiple hierarchical features.
\newblock \emph{{IEEE} Transactions on Instrumentation and Measurement},
  69\penalty0 (4):\penalty0 1493--1504, 2020.

\bibitem[Horv{\'a}th and Kokoszka(2012)]{horvath2012inference}
L.~Horv{\'a}th and P.~Kokoszka.
\newblock \emph{Inference For Functional Data With Applications}.
\newblock Springer Science \& Business Media, 2012.

\bibitem[Horv{\'{a}}th et~al.(2012)Horv{\'{a}}th, Kokoszka, and
  Reeder]{Horvth2012}
L.~Horv{\'{a}}th, P.~Kokoszka, and R.~Reeder.
\newblock Estimation of the mean of functional time series and a two-sample
  problem.
\newblock \emph{Journal of the Royal Statistical Society: Series B (Statistical
  Methodology)}, 75\penalty0 (1):\penalty0 103--122, 2012.

\bibitem[Horv{\'a}th et~al.(2014)Horv{\'a}th, Kokoszka, and
  Rice]{horvath2014testing}
L.~Horv{\'a}th, P.~Kokoszka, and G.~Rice.
\newblock Testing stationarity of functional time series.
\newblock \emph{Journal of Econometrics}, 179\penalty0 (1):\penalty0 66--82,
  2014.

\bibitem[Hsing and Eubank(2015)]{Hsing2015}
T.~Hsing and R.~Eubank.
\newblock \emph{Theoretical Foundations of Functional Data Analysis, with an
  Introduction to Linear Operators}.
\newblock John Wiley {\&} Sons, Ltd, 2015.

\bibitem[Jitkrittum et~al.(2017)Jitkrittum, Xu, Szabo, Fukumizu, and
  Gretton]{Wittawat2017}
W.~Jitkrittum, W.~Xu, Z.~Szabo, K.~Fukumizu, and A.~Gretton.
\newblock A linear-time kernel goodness-of-fit test.
\newblock \emph{Advances in Neural Information Processing Systems 30}, pages
  262--271, 2017.

\bibitem[Kadri et~al.(2016)Kadri, Duflos, Preux, Canu, Rakotomamonjy, and
  Audiffren]{Kadri2016}
H.~Kadri, E.~Duflos, P.~Preux, S.~Canu, A.~Rakotomamonjy, and J.~Audiffren.
\newblock Operator-valued kernels for learning from functional response data.
\newblock \emph{Journal of Machine Learning Research}, 17\penalty0
  (20):\penalty0 1--54, 2016.

\bibitem[Kanagawa et~al.(2018)Kanagawa, Hennig, Sejdinovic, and
  Sriperumbudur]{Kanagawa2018Review}
M.~Kanagawa, P.~Hennig, D.~Sejdinovic, and B.~K. Sriperumbudur.
\newblock Gaussian processes and kernel methods: A review on connections and
  equivalences.
\newblock \emph{arXiv:1807.02582}, 2018.

\bibitem[Kechris(1995)]{Kechris1995}
A.~S. Kechris.
\newblock \emph{Classical Descriptive Set Theory}.
\newblock Springer New York, 1995.

\bibitem[Kidger et~al.(2019)Kidger, Bonnier, Perez~Arribas, Salvi, and
  Lyons]{Kidger2019}
P.~Kidger, P.~Bonnier, I.~Perez~Arribas, C.~Salvi, and T.~Lyons.
\newblock Deep signature transforms.
\newblock In \emph{Advances in Neural Information Processing Systems 32}, pages
  3105--3115. Curran Associates, Inc., 2019.

\bibitem[Kolmogorov-Smirnov et~al.(1933)Kolmogorov-Smirnov, Kolmogorov, and
  Kolmogorov]{kolmogorov1933sulla}
A.~Kolmogorov-Smirnov, A.~Kolmogorov, and M.~Kolmogorov.
\newblock Sulla determinazione empirica di uma legge di distribuzione.
\newblock \emph{Giornale dell'Istituto Italiano degli Attuari}, 1933.

\bibitem[Liu et~al.(2020)Liu, Xu, Lu, Zhang, Gretton, and Sutherland]{Liu2020}
F.~Liu, W.~Xu, J.~Lu, G.~Zhang, A.~Gretton, and D.~J. Sutherland.
\newblock Learning deep kernels for non-parametric two-sample tests.
\newblock In \emph{Proceedings of the 37th International Conference on Machine
  Learning}, 2020.

\bibitem[Lopes et~al.(2011)Lopes, Jacob, and Wainwright]{lopes2011more}
M.~Lopes, L.~Jacob, and M.~J. Wainwright.
\newblock A more powerful two-sample test in high dimensions using random
  projection.
\newblock \emph{Advances in Neural Information Processing Systems}, pages
  1206--1214, 2011.

\bibitem[Maniglia and Rhandi(2004)]{Maniglia2004}
S.~Maniglia and A.~Rhandi.
\newblock Gaussian measures on separable hilbert spaces and applications, 2004.

\bibitem[Mardia and Dryden(1989)]{mardia1989statistical}
K.~Mardia and I.~Dryden.
\newblock The statistical analysis of shape data.
\newblock \emph{Biometrika}, 76\penalty0 (2):\penalty0 271--281, 1989.

\bibitem[Minh(2009)]{Minh2009}
H.~Q. Minh.
\newblock {Some properties of Gaussian reproducing kernel Hilbert spaces and
  their implications for function approximation and learning theory}.
\newblock \emph{Constructive Approximation}, 32\penalty0 (2):\penalty0
  307--338, 2009.

\bibitem[Muandet et~al.(2017)Muandet, Fukumizu, Sriperumbudur, and
  Schölkopf]{Muandet2017}
K.~Muandet, K.~Fukumizu, B.~Sriperumbudur, and B.~Schölkopf.
\newblock Kernel mean embedding of distributions: A review and beyond.
\newblock \emph{Foundations and Trends® in Machine Learning}, 10\penalty0
  (1-2):\penalty0 1--141, 2017.

\bibitem[M{\"u}ller(1997)]{muller1997integral}
A.~M{\"u}ller.
\newblock Integral probability metrics and their generating classes of
  functions.
\newblock \emph{Advances in Applied Probability}, 29\penalty0 (2):\penalty0
  429--443, 1997.

\bibitem[Narcowich et~al.(2006)Narcowich, Ward, and Wendland]{Narcowich2006}
F.~J. Narcowich, J.~D. Ward, and H.~Wendland.
\newblock {Sobolev error estimates and a Bernstein inequality for scattered
  data interpolation via radial basis functions}.
\newblock \emph{Constructive Approximation}, 24\penalty0 (2):\penalty0
  175--186, 2006.

\bibitem[Nelsen and Stuart(2020)]{nelsen2020random}
N.~H. Nelsen and A.~M. Stuart.
\newblock The random feature model for input-output maps between {Banach}
  spaces.
\newblock \emph{arXiv:2005.10224}, 2020.

\bibitem[Nourdin and Peccati(2009)]{Nourdin2009}
I.~Nourdin and G.~Peccati.
\newblock \emph{Normal Approximations with Malliavin Calculus}.
\newblock Cambridge University Press, 2009.

\bibitem[Panaretos et~al.(2010)Panaretos, Kraus, and Maddocks]{Panaretos2010}
V.~M. Panaretos, D.~Kraus, and J.~H. Maddocks.
\newblock Second-order comparison of {Gaussian} random functions and the
  geometry of {DNA} minicircles.
\newblock \emph{Journal of the American Statistical Association}, 105\penalty0
  (490):\penalty0 670--682, 2010.

\bibitem[Paparoditis and Sapatinas(2016)]{Paparoditis2016}
E.~Paparoditis and T.~Sapatinas.
\newblock Bootstrap-based testing of equality of mean functions or equality of
  covariance operators for functional data.
\newblock \emph{Biometrika}, 103\penalty0 (3):\penalty0 727--733, 2016.

\bibitem[Paulsen and Raghupathi(2016)]{Paulsen2016}
V.~I. Paulsen and M.~Raghupathi.
\newblock \emph{An introduction to the theory of reproducing kernel {H}ilbert
  spaces}, volume 152 of \emph{Cambridge Studies in Advanced Mathematics}.
\newblock Cambridge University Press, Cambridge, 2016.

\bibitem[Pelletier(2005)]{Pelletier2005}
B.~Pelletier.
\newblock Kernel density estimation on riemannian manifolds.
\newblock \emph{Statistics {\&} Probability Letters}, 73\penalty0 (3):\penalty0
  297--304, 2005.

\bibitem[Pomann et~al.(2016)Pomann, Staicu, and Ghosh]{Pomann2016}
G.-M. Pomann, A.-M. Staicu, and S.~Ghosh.
\newblock A two-sample distribution-free test for functional data with
  application to a diffusion tensor imaging study of multiple sclerosis.
\newblock \emph{Journal of the Royal Statistical Society: Series C (Applied
  Statistics)}, 65\penalty0 (3):\penalty0 395--414, Jan. 2016.

\bibitem[Rahimi and Recht(2008)]{NIPS2007_3182}
A.~Rahimi and B.~Recht.
\newblock Random features for large-scale kernel machines.
\newblock \emph{Advances in Neural Information Processing Systems 20}, pages
  1177--1184, 2008.

\bibitem[Ramdas et~al.(2015{\natexlab{a}})Ramdas, Reddi, P{\'o}czos, Singh, and
  Wasserman]{ramdas2015decreasing}
A.~Ramdas, S.~J. Reddi, B.~P{\'o}czos, A.~Singh, and L.~Wasserman.
\newblock On the decreasing power of kernel and distance based nonparametric
  hypothesis tests in high dimensions.
\newblock In \emph{Twenty-Ninth AAAI Conference on Artificial Intelligence},
  2015{\natexlab{a}}.

\bibitem[Ramdas et~al.(2015{\natexlab{b}})Ramdas, Reddi, P{\'o}czos, Singh, and
  Wasserman]{Ramdas2014_mean_shift}
A.~Ramdas, S.~J. Reddi, B.~P{\'o}czos, A.~R. Singh, and L.~A. Wasserman.
\newblock On the high-dimensional power of linear-time kernel two-sample
  testing under mean-difference alternatives.
\newblock In \emph{18th International Conference on Artificial Intelligence and
  Statistics}, 2015{\natexlab{b}}.

\bibitem[Rasmussen and Williams(2006)]{Rasmussen2006}
C.~Rasmussen and C.~Williams.
\newblock \emph{{Gaussian Processes for Machine Learning}}.
\newblock MIT Press, 2006.

\bibitem[Saitoh and Sawano(2016)]{saitoh2016theory}
S.~Saitoh and Y.~Sawano.
\newblock \emph{Theory of reproducing kernels and applications}.
\newblock Springer, 2016.

\bibitem[Schmid(1958)]{schmid1958kolmogorov}
P.~Schmid.
\newblock On the {Kolmogorov} and {Smirnov} limit theorems for discontinuous
  distribution functions.
\newblock \emph{The Annals of Mathematical Statistics}, 29\penalty0
  (4):\penalty0 1011--1027, 1958.

\bibitem[Schoenberg(1938)]{Schoenberg1938}
I.~J. Schoenberg.
\newblock Metric spaces and completely monotone functions.
\newblock \emph{The Annals of Mathematics}, 39\penalty0 (4):\penalty0 811,
  1938.

\bibitem[Sejdinovic et~al.(2013)Sejdinovic, Sriperumbudur, Gretton, and
  Fukumizu]{Sejdinovic2013}
D.~Sejdinovic, B.~Sriperumbudur, A.~Gretton, and K.~Fukumizu.
\newblock Equivalence of distance-based and {RKHS}-based statistics in
  hypothesis testing.
\newblock \emph{The Annals of Statistics}, 41\penalty0 (5):\penalty0
  2263--2291, 2013.

\bibitem[Serfling(1980)]{Serfling1980}
R.~J. Serfling.
\newblock \emph{Approximation Theorems of Mathematical Statistics}.
\newblock John Wiley {\&} Sons, Inc., Nov. 1980.

\bibitem[Shawe-Taylor and Cristianini(2004)]{ShaweTaylor2004}
J.~Shawe-Taylor and N.~Cristianini.
\newblock \emph{Kernel Methods for Pattern Analysis}.
\newblock Cambridge University Press, 2004.

\bibitem[Simon-Gabriel and Sch{{\"o}}lkopf(2018)]{Simon-Gabriel2018}
C.-J. Simon-Gabriel and B.~Sch{{\"o}}lkopf.
\newblock Kernel distribution embeddings: Universal kernels, characteristic
  kernels and kernel metrics on distributions.
\newblock \emph{Journal of Machine Learning Research}, 19\penalty0
  (44):\penalty0 1--29, 2018.

\bibitem[Simon-Gabriel et~al.(2020)Simon-Gabriel, Barp, and
  Mackey]{simon2020metrizing}
C.-J. Simon-Gabriel, A.~Barp, and L.~Mackey.
\newblock Metrizing weak convergence with {Maximum} {Mean} {Discrepancies}.
\newblock \emph{arXiv:2006.09268}, 2020.

\bibitem[Smirnov(1948)]{smirnov1948table}
N.~Smirnov.
\newblock Table for estimating the goodness of fit of empirical distributions.
\newblock \emph{The Annals of Mathematical Statistics}, 19\penalty0
  (2):\penalty0 279--281, 1948.

\bibitem[Song and Yan(2013)]{Song2013}
K.~Song and Y.~Yan.
\newblock A noise robust method based on completed local binary patterns for
  hot-rolled steel strip surface defects.
\newblock \emph{Applied Surface Science}, 285:\penalty0 858--864, 2013.

\bibitem[Song and Yan(2020)]{SongURL}
K.~Song and Y.~Yan.
\newblock {NEU Steel Dataset Description}.
\newblock
  \url{http://faculty.neu.edu.cn/yunhyan/NEU_surface_defect_database.html},
  2020.
\newblock {Last accessed 29/07/2020}.

\bibitem[Sriperumbudur et~al.(2010)Sriperumbudur, Gretton, Fukumizu,
  Sch\"{o}lkopf, and Lanckriet]{Sriperumbudur2010}
B.~K. Sriperumbudur, A.~Gretton, K.~Fukumizu, B.~Sch\"{o}lkopf, and G.~R.
  Lanckriet.
\newblock Hilbert space embeddings and metrics on probability measures.
\newblock \emph{Journal of Machine Learning Research}, 11:\penalty0 1517--1561,
  2010.
\newblock ISSN 1532-4435.

\bibitem[Sriperumbudur et~al.(2011)Sriperumbudur, Fukumizu, and
  Lanckriet]{Sriperumbudur2011}
B.~K. Sriperumbudur, K.~Fukumizu, and G.~R.~G. Lanckriet.
\newblock Universality, characteristic kernels and {RKHS} embedding of
  measures.
\newblock \emph{Journal of Machine Learning Research}, 12:\penalty0 2389--2410,
  2011.
\newblock ISSN 1532-4435.

\bibitem[Steinwart and Christmann(2008)]{Steinwart2008}
I.~Steinwart and A.~Christmann.
\newblock \emph{Support Vector Machines}.
\newblock Springer, 2008.

\bibitem[Steinwart and Scovel(2012)]{Steinwart2012}
I.~Steinwart and C.~Scovel.
\newblock Mercer's theorem on general domains: On the interaction between
  measures, kernels, and {RKHSs}.
\newblock \emph{Constructive Approximation}, 35\penalty0 (3):\penalty0
  363--417, 2012.

\bibitem[Sullivan(2015)]{Sullivan2015}
T.~Sullivan.
\newblock \emph{Introduction to Uncertainty Quantification}.
\newblock Springer International Publishing, 2015.

\bibitem[Sutherland(2019)]{Sutherland2019}
D.~J. Sutherland.
\newblock Unbiased estimators for the variance of {MMD} estimators.
\newblock \emph{arXiv:1906.02104}, 2019.

\bibitem[Sutherland et~al.(2016)Sutherland, Tung, Strathmann, De, Ramdas,
  Smola, and Gretton]{Sutherland2016}
D.~J. Sutherland, H.-Y. Tung, H.~Strathmann, S.~De, A.~Ramdas, A.~Smola, and
  A.~Gretton.
\newblock Generative models and model criticism via optimized maximum mean
  discrepancy.
\newblock In \emph{Proceedings of the 8th International Conference on Learning
  Representations}, 2016.

\bibitem[Sz{\'e}kely(2003)]{szekely2003statistics}
G.~J. Sz{\'e}kely.
\newblock E-statistics: The energy of statistical samples.
\newblock \emph{Bowling Green State University, Department of Mathematics and
  Statistics Technical Report}, 3\penalty0 (05):\penalty0 1--18, 2003.

\bibitem[Sz{\'e}kely and Rizzo(2004)]{szekely2004testing}
G.~J. Sz{\'e}kely and M.~L. Rizzo.
\newblock Testing for equal distributions in high dimension.
\newblock \emph{InterStat}, 5\penalty0 (16.10):\penalty0 1249--1272, 2004.

\bibitem[Vakhania et~al.(1987)Vakhania, Tarieladze, and
  Chobanyan]{Vakhania1987}
N.~N. Vakhania, V.~I. Tarieladze, and S.~A. Chobanyan.
\newblock \emph{Probability Distributions on Banach Spaces}.
\newblock Springer Netherlands, 1987.

\bibitem[van~der Vaart and van Zanten(2011)]{VanderVaart2011}
A.~W. van~der Vaart and J.~H. van Zanten.
\newblock {Information rates of nonparametric Gaussian process methods}.
\newblock \emph{Journal of Machine Learning Research}, 12:\penalty0 2095--2119,
  2011.

\bibitem[Wahba(1990)]{wahba1990spline}
G.~Wahba.
\newblock \emph{Spline models for observational data}, volume~59.
\newblock Siam, 1990.

\bibitem[Wald and Wolfowitz(1940)]{wald1940test}
A.~Wald and J.~Wolfowitz.
\newblock On a test whether two samples are from the same distribution.
\newblock \emph{The Annals of Mathematical Statistics}, 11:\penalty0 147--162,
  1940.

\bibitem[Wendland(2005)]{Wendland2005}
H.~Wendland.
\newblock \emph{{Scattered Data Approximation}}.
\newblock Cambridge University Press, 2005.

\bibitem[Wilkinson(2019)]{Wilkinson2019}
W.~J. Wilkinson.
\newblock \emph{Gaussian Process Modelling for Audio Signals}.
\newblock PhD thesis, Queen Mary Univeristy London, 2019.

\bibitem[Wynne et~al.(2020)Wynne, Briol, and Girolami]{Wynne2020}
G.~Wynne, F.-X. Briol, and M.~Girolami.
\newblock Convergence guarantees for {Gaussian} process means with misspecified
  likelihoods and smoothness.
\newblock \emph{arXiv:2001.10818}, 2020.

\bibitem[Zhang et~al.(2012)Zhang, Xu, and Zhang]{Zhang2012}
H.~Zhang, Y.~Xu, and Q.~Zhang.
\newblock Refinement of operator-valued reproducing kernels.
\newblock \emph{Journal of Machine Learning Research}, 13\penalty0
  (4):\penalty0 91--136, 2012.

\bibitem[Zhang et~al.(2010)Zhang, Liang, and Xiao]{zhang2010two}
J.-T. Zhang, X.~Liang, and S.~Xiao.
\newblock On the two-sample {Behrens-Fisher} problem for functional data.
\newblock \emph{Journal of Statistical Theory and Practice}, 4\penalty0
  (4):\penalty0 571--587, 2010.

\bibitem[Zhu et~al.(2019)Zhu, Yao, Zhang, and Shao]{Zhu2019}
C.~Zhu, S.~Yao, X.~Zhang, and X.~Shao.
\newblock Distance-based and {RKHS}-based dependence metrics in high dimension.
\newblock \emph{Annals of Statistics}, 2019.
\newblock To appear.

\end{thebibliography}

}

\clearpage
\appendix
\section{Appendix} \label{sec:appendix}

\subsection{Bochner and Minlos-Sazanov Theorem}\label{subsec:minlos}
Bochner's theorem provides an exact relationship between continuous, translation invariant kernels on $\bbR^{d}$, meaning $k(x,y) = \phi(x-y)$ for some continuous $\phi$, and the Fourier transforms of finite Borel measures on $\bbR^{d}$. For a proof see \citet[Theorem 6.6]{Wendland2005}.

\begin{theorem}[\textbf{Bochner}]\label{thm:bochner}
	A continuous function $\phi\colon\bbR^{d}\rightarrow\bbC$ is positive definite if and only if it is the Fourier transform of a finite Borel measure $\mu_{\phi}$ on $\bbR^{d}$
	\begin{align*}
		\hat{\mu}_{\phi}(x)\coloneqq \int_{\bbR^{d}}e^{ix^{T}y}d\mu_{\phi}(y) = \phi(x).
	\end{align*}
\end{theorem}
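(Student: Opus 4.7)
The plan is to prove Bochner's theorem by splitting the biconditional into the easy sufficiency direction (Fourier transforms of finite measures are positive definite and continuous) and the hard necessity direction (continuous positive definite functions arise this way). For sufficiency, continuity of $\hat{\mu}_\phi$ follows from dominated convergence using $|e^{ix^T y}|=1$ and finiteness of $\mu_\phi$. For positive definiteness, a direct computation gives
\begin{equation*}
\sum_{n,m=1}^{N} c_n \overline{c_m}\, \hat{\mu}_\phi(x_n - x_m) \;=\; \int_{\bbR^d} \Bigl|\sum_{n=1}^{N} c_n e^{i x_n^T y}\Bigr|^2 d\mu_\phi(y) \;\geq\; 0.
\end{equation*}

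For necessity, my plan is to use a Gaussian mollification and a weak-limit argument. First I would record the a priori bound $|\phi(x)|\leq \phi(0)$, obtained from positive semi-definiteness of the $2\times 2$ matrix with entries $\phi(0),\phi(x),\phi(-x),\phi(0)$ together with $\phi(-x)=\overline{\phi(x)}$. Next, for each $\epsilon>0$ define the mollified function $\phi_\epsilon(x) = \phi(x) e^{-\epsilon|x|^2/2}$, which is continuous, bounded, in $L^1(\bbR^d)$, and still positive definite since the pointwise product of two positive definite functions is positive definite. Hence its Fourier transform $F_\epsilon$ is a well-defined bounded continuous function. The next step is to show $F_\epsilon \geq 0$: an approximation by Riemann sums turns the defining positive-definiteness inequality for $\phi$ into the integrated inequality $\int\!\!\int \phi_\epsilon(x-y) h(x) \overline{h(y)}\,dx\,dy \geq 0$ for all $h \in \mathcal{S}(\bbR^d)$; specialising $h$ to complex exponentials multiplied by a Gaussian mollifier, collecting terms, and invoking Fourier inversion on $\phi_\epsilon \in L^1$ identifies this quantity with $F_\epsilon$ evaluated at a point, up to a positive constant.

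Once nonnegativity of $F_\epsilon$ is in hand, the finite nonnegative measures $d\mu_\epsilon(y) = F_\epsilon(y)\,dy$ have total mass equal to $\phi_\epsilon(0) = \phi(0)$ by Fourier inversion applied to $\phi_\epsilon$. The crux of the argument is then to extract a weakly convergent subsequence. For this I would invoke Prokhorov's theorem, so the step I expect to be the main obstacle is \emph{tightness} of $\{\mu_\epsilon\}$: boundedness of total mass alone is not enough to rule out mass escaping to infinity. The standard way around this is to exploit continuity of $\phi$ at the origin via a Lévy-type estimate: for each coordinate direction $e_j$ and each $r>0$,
\begin{equation*}
\mu_\epsilon\bigl(\{y : |y_j| > 2/r\}\bigr) \;\leq\; \frac{C}{r}\int_{-r}^{r} \bigl(\phi_\epsilon(0) - \mathrm{Re}\,\phi_\epsilon(t e_j)\bigr)\,dt,
\end{equation*}
and the right-hand side can be made uniformly small in $\epsilon$ by choosing $r$ small, because $\phi_\epsilon(t e_j) \to \phi(t e_j)$ uniformly on compact sets and $\phi$ is continuous at $0$ with $\phi_\epsilon(0)=\phi(0)$. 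Combining these one-dimensional estimates yields tightness on $\bbR^d$.

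With tightness secured, Prokhorov gives a weak limit $\mu_{\epsilon_k} \to \mu$ along a subsequence, and $\mu$ is a finite positive Borel measure with $\mu(\bbR^d) \leq \phi(0)$. To finish, I would identify $\hat\mu = \phi$: for each fixed $x$ the map $y \mapsto e^{ix^T y}$ is bounded and continuous, so weak convergence gives $\hat\mu(x) = \lim_k \hat{\mu}_{\epsilon_k}(x) = \lim_k \phi_{\epsilon_k}(x) = \phi(x)$, where the last equality uses Fourier inversion on $\phi_\epsilon \in L^1$ and pointwise convergence $\phi_\epsilon(x)\to\phi(x)$. Finally, evaluating at $x=0$ gives $\mu(\bbR^d) = \phi(0)$, confirming $\mu$ is finite and completing the proof.
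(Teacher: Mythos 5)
Your proposal is correct, but note that the paper does not prove this statement at all: Bochner's theorem is stated as background in Appendix A.1 and the proof is delegated to the citation \citet[Theorem 6.6]{Wendland2005}. What you have written is a self-contained version of the classical argument (Gaussian mollification, nonnegativity of the mollified Fourier transform, a L\'evy truncation inequality for tightness, Prokhorov, and identification of the weak limit), and the overall architecture is sound: the sufficiency direction is exactly the standard computation, and for necessity you correctly identify tightness as the genuine crux, since vague (Helly) convergence alone would not let you pass to the limit against the non-vanishing test functions $y\mapsto e^{ix^{T}y}$. Two places where you gloss over details worth making explicit if this were written out in full: first, the claim that $\mu_{\epsilon}(\bbR^{d})=\int F_{\epsilon}=\phi_{\epsilon}(0)$ presupposes $F_{\epsilon}\in L^{1}(\bbR^{d})$, which does not follow from $\phi_{\epsilon}\in L^{1}$ alone --- you need to combine $F_{\epsilon}\geq 0$ with Gaussian-summed inversion at the origin and monotone convergence to conclude both integrability and the value of the integral; second, the step identifying the double integral $\int\!\int\phi_{\epsilon}(x-y)h(x)\overline{h(y)}\,dx\,dy$ with a pointwise value of $F_{\epsilon}$ is really a Parseval/Plancherel computation followed by an approximate-identity limit in the frequency variable, rather than Fourier inversion per se. Both are standard and fillable, so I would regard the proposal as a correct alternative to the paper's citation rather than as containing a gap.
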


Bochner's theorem does not continue to hold in infinite dimensions, for example the kernel $k(x,y) = e^{-\frac{1}{2}\norm{x-y}_{\calX}^{2}}$ when $\calX$ is an infinite dimensional, real, separable Hilbert space is not the Fourier transform of a finite Borel measure on $\calX$ \citep[Proposition 1.2.11]{Maniglia2004}. Instead, a stronger continuity property is required, this is the content of the Minlos-Sazanov theorem. For a proof see \citep[Theorem 1.1.5]{Maniglia2004} or \citet[Theorem \RomanNumeralCaps{6}.1.1]{Vakhania1987}.

\begin{theorem}[\textbf{Minlos-Sazanov}]\label{thm:minlos-sazanov}
	Let $\calX$ be a real, seperable Hilbert space and $\phi\colon\calX\rightarrow\bbC$ a positive definite function on $\calX$ then the following are equivalent
	\begin{enumerate}
		\item $\phi$ is the Fourier transform of a finite Borel measure on $\calX$
		\item There exists $C\in L^{+}_{1}(\calX)$ such that $\phi$ is continuous with respect to the norm induced by $C$ given by $\norm{x}_{C}^{2} = \langle Cx,x\rangle_{\calX}$.
	\end{enumerate}
\end{theorem}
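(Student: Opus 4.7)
The plan is to prove the two implications separately; direction (1) $\implies$ (2) is a fairly direct construction, while the converse requires the machinery of cylindrical measures together with a tightness argument that crucially uses the trace-class assumption on $C$.

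For (1) $\implies$ (2), assume $\phi=\widehat{\mu}$ for a finite Borel measure $\mu$ on $\calX$. First I would invoke the standard Bochner inequality $|\phi(x)-\phi(y)|^{2}\leq 2\phi(0)\,\text{Re}(\phi(0)-\phi(x-y))$ to reduce continuity of $\phi$ in the $\norm{\cdot}_{C}$ topology to continuity at the origin. I would then propose the explicit candidate operator $C$ via the bilinear form
\begin{align*}
\langle Cx,y\rangle_{\calX}=\int_{\calX}\frac{\langle x,z\rangle_{\calX}\langle y,z\rangle_{\calX}}{1+\norm{z}_{\calX}^{2}}\,d\mu(z),
\end{align*}
and verify $C\in L^{+}_{1}(\calX)$ by Parseval applied to any orthonormal basis of $\calX$, which yields $\Tr(C)=\int_{\calX}\norm{z}_{\calX}^{2}/(1+\norm{z}_{\calX}^{2})\,d\mu(z)\leq \mu(\calX)<\infty$. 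For continuity at the origin I would use tightness of $\mu$ on the Polish space $\calX$: given $\epsilon>0$ pick $R$ with $\mu(\{\norm{z}_{\calX}>R\})<\epsilon$, split $\phi(0)-\phi(x)=\int(1-e^{i\langle x,z\rangle_{\calX}})\,d\mu(z)$ across $\{\norm{z}_{\calX}\leq R\}$ and its complement, use $|1-e^{it}|\leq 2$ on the tail and $|1-e^{it}|\leq|t|$ on the bulk, and then apply Cauchy--Schwarz with the defining formula of $C$ to bound the bulk by $(1+R^{2})^{1/2}\norm{x}_{C}$.

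For (2) $\implies$ (1) I would follow the cylindrical measure route. For every finite-dimensional subspace $V\subset\calX$ the restriction $\phi|_{V}$ is positive definite and continuous on $V$, so finite-dimensional Bochner (Theorem \ref{thm:bochner}) produces a finite Borel measure $\mu_{V}$ on $V$ with $\widehat{\mu}_{V}=\phi|_{V}$. Consistency under orthogonal projections between nested finite-dimensional subspaces follows automatically from the Fourier inversion structure, so the family $\{\mu_{V}\}$ defines a cylindrical measure $\mu$ on $\calX$. The nontrivial task is to show that $\mu$ extends to a countably additive Borel measure on $\calX$, which by a Prokhorov-style criterion is equivalent to tightness of $\mu$ on $\calX$. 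I would obtain tightness through a uniform tail bound: integrating the identity $1-\phi(\xi)=\int(1-\cos\langle\xi,z\rangle_{V})\,d\mu_{V}(z)$ against a centred Gaussian on $V$ with covariance $\beta C|_{V}$ and applying Fubini gives a bound of the form $\mu_{V}(\{z\in V\colon\norm{z}_{\calX}>R\})\leq \omega_{\phi}(\beta^{1/2})+\beta R^{-2}\Tr(C)$, where $\omega_{\phi}$ is the modulus of continuity of $\phi$ with respect to $\norm{\cdot}_{C}$. Since $C\in L^{+}_{1}(\calX)$ makes $C^{1/2}$ Hilbert--Schmidt and hence compact, images of balls in $\calX$ under $C^{1/2}$ are relatively compact, and combining this with the uniform tail estimate produces the required family of tight compact sets.

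The main obstacle is the tightness estimate in the converse direction: one has to produce a bound that is genuinely uniform over all finite-dimensional $V\subset\calX$, which requires choosing the Gaussian test measure adapted to $C|_{V}$ and executing the Fubini exchange so that the modulus of continuity $\omega_{\phi}$ sees only the $\norm{\cdot}_{C}$ norm rather than the native $\calX$ norm. The trace-class hypothesis on $C$ is essential at precisely this point, since weaker assumptions such as Hilbert--Schmidt or merely bounded would not make $C^{1/2}$ compact and hence would not convert the tail bound into relatively compact subsets of $\calX$.
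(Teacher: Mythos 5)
First, a point of reference: the paper does not prove this theorem itself, it cites \citet[Theorem 1.1.5]{Maniglia2004} and \citet{Vakhania1987}, and your overall architecture (easy direction via an explicit trace-class operator built from $\mu$, hard direction via cylindrical measures plus a uniform tightness estimate) is exactly the standard route taken there. Your (1)$\Rightarrow$(2) argument is correct: the operator defined by $\langle Cx,y\rangle_{\calX}=\int\langle x,z\rangle_{\calX}\langle y,z\rangle_{\calX}(1+\norm{z}_{\calX}^{2})^{-1}d\mu(z)$ is trace class with $\Tr(C)\leq\mu(\calX)$, and the bulk/tail split combined with the Bochner inequality gives continuity at the origin in $\norm{\cdot}_{C}$ (you only drop a harmless constant $\mu(\calX)^{1/2}$ in the Cauchy--Schwarz step).

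The gap is in the tightness step of (2)$\Rightarrow$(1), and it is twofold. First, integrating $1-\phi$ against a centred Gaussian on $V$ with covariance $\beta C|_{V}$ produces, after Fubini, the integrand $1-e^{-\frac{\beta}{2}\langle Cz,z\rangle_{\calX}}$, which bounds $\mu_{V}(\{\norm{C^{1/2}z}_{\calX}>R\})$ rather than $\mu_{V}(\{\norm{z}_{\calX}>R\})$; to control the latter you must integrate against $N(0,\beta I_{V})$ and use the quadratic bound $\mathrm{Re}(1-\phi(\xi))\leq\epsilon+2\delta^{-2}\norm{\xi}_{C}^{2}$, which is where $\Tr(C)$ enters. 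Second, and more seriously, even with the correct uniform ball bound in hand, your proposed compact sets $C^{1/2}(\bar{B}(0,R))$ cannot carry the mass: balls of $\calX$ are not compact, and the sets $C^{1/2}(\bar{B}(0,R))$ are far too small. A decisive counterexample is $\phi=\widehat{N}_{0,C}$ for a non-degenerate $C\in L^{+}_{1}(\calX)$: this $\phi$ satisfies hypothesis (2) with this very $C$, yet the limit measure $N_{0,C}$ assigns measure zero to the whole Cameron--Martin space $C^{1/2}\calX$, so no set of the form $C^{1/2}(\bar{B}(0,R))$ has measure close to one. The classical repair is to interpolate: writing $C=\sum_{k}\lambda_{k}e_{k}\otimes e_{k}$, choose $a_{k}\rightarrow 0$ with $\sum_{k}\lambda_{k}a_{k}^{-2}<\infty$ (always possible when $\sum_{k}\lambda_{k}<\infty$, e.g.\ $a_{k}^{2}=(\sum_{j\geq k}\lambda_{j})^{1/2}$, where the sum telescopes), set $A=\sum_{k}a_{k}e_{k}\otimes e_{k}$, and run the Gaussian argument with covariance $\beta A^{-2}|_{V}$; the right-hand side then involves $\Tr(A^{-1}CA^{-1})=\sum_{k}\lambda_{k}a_{k}^{-2}<\infty$ uniformly over all finite-dimensional $V$, while the sets $\{z\colon\norm{A^{-1}z}_{\calX}\leq R\}=A(\bar{B}(0,R))$ are genuinely compact. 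Without this intermediate compact operator the trace-class hypothesis is never actually converted into compact sets of nearly full measure, so the extension to a countably additive measure is not established.
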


The existence of such an operator is a much stronger continuity property than standard continuity on $\calX$ and will be crucial in the proof of Theorem \ref{thm:identity_char}, one of our main results. To see that continuity with respect to such a $C$ is stronger than usual continuity consider the following example. Fix any $\varepsilon > 0$ and assume we only know that $\phi\colon\calX\rightarrow\bbR$ is continuous and for simplicity assume that $\phi(0) = 1$, then we know there exists some $\delta > 0$ such that $\norm{x}_{\calX} < \delta$ implies $\lvert\phi(x) - 1\rvert < \varepsilon$ meaning we have control of $\phi(x)$ over the bounded set $\norm{x}_{\calX} <\delta$. On the other hand, if $\phi$ is continuous with respect to $\norm{\cdot}_{C}$ for some $C\in L^{+}_{1}(\calX)$ then we know there exists some $\delta' > 0$ such that $\norm{x}_{C} < \delta'$  implies $\lvert\phi(x) - 1\rvert < \varepsilon$ so we have control of $\phi(x)$ over the \emph{unbounded} set $\norm{x}_{C} < \delta'$. To see this set is unbounded let $\{\lambda_{n},e_{n}\}_{n=1}^{\infty}$ be the orthonormal eigensystem of $C$ and for $n\in\bbN$ let $y_{n} = \frac{\delta' e_{n}}{2\lambda_{n}}$ if $\lambda_{n} > 0$ otherwise $y_{n} = ne_{n}$, then since $C\in L^{+}_{1}(\calX)$ we know $\lambda_{n}\rightarrow 0$ so $\norm{y_{n}}_{\calX}\rightarrow\infty$. Since we used elements from the eigensystem it is clear that $\norm{y_{n}}_{C} \leq \delta'/2$. Therefore we have constructed a subset of the ball with respect to $\norm{\cdot}_{C}$ of radius $\delta'$ that has unbounded norm. 

% A way to see that this additional continuity assumption is needed is to note that, informally speaking, there is an intimate relationship between the decay of the density of a measure and the continuity of its Fourier transform. In the case of measures on $\calX$ we know that the random variables must satisfy $\norm{x}_{\calX}^{2} = \sum_{n=1}^{\infty}\langle x,e_{n}\rangle^{2} < \infty$, for any orthonormal basis $\{e_{n}\}_{n=1}^{\infty}$ which imposes a tail condition on the high frequencies of the distribution. This is a decay condition on the probability measure and so one should expect a corresponding continuity condition on the Fourier transform, which is manifested in Theorem \ref{thm:minlos-sazanov} through $C$ being trace class.

\subsection{Proofs for Section \ref{sec:scaling}}\label{app:scaling}

\begin{proof}[\textbf{Proof of Proposition }\ref{prop:scaling}]

We begin by outlining the scaling that occurs for each of the two cases. For economy of notation we set $\gamma = \gamma_{N}$. If $k_{0}(s,t) = \delta_{st}$ then $\Tr(\Sigma^{i})= N$ for all $i\in\bbN$ and $\langle m_{N},\Sigma m_{N}\rangle = \norm{m_{N}}_{2}^{2} = \Theta(N)$ by the Riemann scaling. If $k_{0}$ is continuous and bounded then $\Tr(\Sigma^{i}) = \Theta(N^{i})$ since $N^{-i}\Tr(\Sigma^{i})\rightarrow \Tr(C_{k_{0}}^{i})$ and $\langle m_{N},\Sigma^{i}m_{N}\rangle = \Theta(N^{i+1})$ since $N^{-(i+1)}\langle m_{N},\Sigma^{i} m_{N}\rangle\rightarrow \int_{\calD}\int_{\calD}m(s)C_{k_{0}}^{i}m(t)dsdt$.

We shall outline the proof of the result in the second case, the proof for the first case is entirely analogous and is completed by substituting the above scaling results where appropriate. Throughout this proof we shall repeatedly use the following Taylor series approximation, for a positive definite matrix $\Sigma \in \mathbb{R}^{N\times N}$, based on the classical Girard-Waring formulas.
\begin{align*}
\det(I + \varepsilon\Sigma) & = 1 + \varepsilon \Tr(\Sigma) + \frac{\varepsilon^2}{2}R_1(s)\\
& = 1 + \varepsilon \Tr(\Sigma) + \frac{\varepsilon^2}{2}(\Tr(\Sigma)^2 - \Tr(\Sigma^2)) + \frac{\varepsilon^3}{6}R_2(s'),
\end{align*}
for some $0 < s, s'\leq \varepsilon$ where 
\begin{align*}
    R_1(s) = \left(\sum\frac{\lambda_{i}}{\left(1+s\lambda_{i}\right)}\right)^{2}-\det(I+s \Sigma)\sum\frac{\lambda_{i}^{2}}{(1+s\lambda_{i})^{2}}, 
\end{align*}
and
\begin{align*}
   R_2(s) =&-2(1 + \det(I+s \Sigma))\sum\frac{\lambda_{i}}{\left(1+s\lambda_{i}\right)}\sum\frac{\lambda_{i}^{2}}{(1+s\lambda_{i})^{2}} \\
   & +2\det(I+s \Sigma)\sum\frac{\lambda_{i}^{3}}{(1+s\lambda_{i})^{3}} ,
\end{align*}
where $\lbrace \lambda_1, \ldots, \lambda_N \rbrace$ are the eigenvalues of $\Sigma$. We may bound these remainders
\begin{align*}
    R_{1}(s) \leq \left(\sum\frac{\lambda_{i}}{\left(1+s\lambda_{i}\right)}\right)^{2} \leq \Tr(\Sigma)^{2} = O(N^{2}),
\end{align*}
and 
\begin{align*}
    R_{2}(s) \leq 2\det(I+s \Sigma)\sum\frac{\lambda_{i}^{3}}{(1+s\lambda_{i})^{3}} \leq 2e^{\varepsilon\Tr(\Sigma)}\Tr(\Sigma^{3}).
\end{align*}
In our scenario $\varepsilon = \gamma^{-2} = O(N^{-2\alpha}) = o(N^{-1})$ therefore $e^{\varepsilon\Tr(\Sigma)}$ is bounded since $\Tr(\Sigma) = O(N)$. Finally, $\Tr(\Sigma^{3}) = O(N^{3})$ therefore $R_{2}(s) = O(N^{3})$. Setting $T = \frac{1}{\gamma}I$ and $R = S=\Sigma$ in Corollary \ref{cor:MMD_two_GPs_commute} and applying the above Taylor expansion along with the expansions for $x^{-1/2}$ and $(I+\varepsilon\Sigma)^{-1}$ with Lagrange remainder term
\begin{align*}
    \MMD_{k}^2(P, Q) &= 2\det(I + 2\Sigma/\gamma^2)^{-\frac{1}{2}}\left(1 - e^{-\frac{1}{2\gamma^2}\langle (I + 2\Sigma/\gamma^2)^{-1}\mu, \mu \rangle }\right)\\
    &= \left(1 + \frac{1}{\gamma^2}r_1\right)\left(\frac{1}{\gamma^2}\left\langle (I + 2\Sigma/\gamma^2)^{-1}\mu, \mu \right\rangle + \frac{1}{\gamma^4}r_2\right)\\
    &= \left(1 + \frac{1}{\gamma^2}r_1\right)\left(\frac{1}{\gamma^2}\left\langle \mu, \mu \right\rangle + \frac{1}{\gamma^4}r_2 +  \frac{1}{\gamma^4}r_3\right),
\end{align*}
where $r_1,r_2,r_3$ are remainder terms.  \\

To leading order $r_1/\gamma^2=O(N^{1-2\alpha}), r_2/\gamma^4 =O(N^{2-4\alpha}),r_3/\gamma^4 =O( N^{2-4\alpha})$ so it follows that 
$\MMD_{k}^2(P, Q)   = \gamma^{-2}\lVert m_{N} \rVert_2 + r(N)$
where, to leading order, $r(N) =O( N^{2-4\alpha})$. We now consider the denominator. To this end, applying Theorem \ref{thm:variance_of_estimator_mean_shift} with $T = \gamma^{-1}I$ and $S = \Sigma$ we obtain
\begin{align*}
    \xi_2 &= 2\det(I+4\Sigma/\gamma^2)^{-\frac{1}{2}} \left( 1 + e^{-\frac{1}{\gamma^2}\langle \mu, (I+4\Sigma/\gamma^2)^{-1} \mu\rangle} \right) \\
    & -2\det(I + 2\Sigma/\gamma^2)^{-1}(1 + e^{-\frac{1}{\gamma^2}\langle \mu (I + 2\Sigma/\gamma^2)^{-1}\mu \rangle} - 4e^{-\frac{1}{2\gamma^2}\langle \mu (I + 2\Sigma/\gamma^2)^{-1}\mu \rangle}) \\
    & -8\det((I + \Sigma/\gamma^2)(I + 3\Sigma/\gamma^2))^{-1/2} e^{-\frac{1}{2\gamma^2}\langle \mu, (I + 2\Sigma/\gamma^2)(I+ 3\Sigma/\gamma^2)^{-1}(I+\Sigma/\gamma^2)^{-1}\mu\rangle}.
\end{align*}
We split this into two terms, $\xi_2 = A_1 + A_2$, where
\begin{align*}
    A_{1}  & =    4\det(I+4\Sigma/\gamma^2)^{-\frac{1}{2}} + 4\det(I + 2\Sigma/\gamma^2)^{-1}\\
    & -8\det((I + \Sigma/\gamma^2)(I + 3\Sigma/\gamma^2))^{-1/2},
\end{align*}
and 
\begin{align*}
    A_2 &= 2\det(I+4\Sigma/\gamma^2)^{-\frac{1}{2}} \left( -1 +  e^{-\frac{1}{\gamma^2}\langle \mu, (I+4\Sigma/\gamma^2)^{-1} \mu\rangle} \right) \\
    &-2\det(I + 2\Sigma/\gamma^2)^{-1}(3 + e^{-\frac{1}{\gamma^2}\langle \mu (I + 2\Sigma/\gamma^2)^{-1}\mu \rangle} - 4e^{-\frac{1}{2\gamma^2}\langle \mu (I + 2\Sigma/\gamma^2)^{-1}\mu \rangle}) \\
    & -8\det((I + \Sigma/\gamma^2)(I + 3\Sigma/\gamma^2))^{-1/2}\\
    &\cdot(-1 + e^{-\frac{1}{2\gamma^2}\langle \mu, (I + 2\Sigma/\gamma^2)(I+ 3\Sigma/\gamma^2)^{-1}(I+\Sigma/\gamma^2)^{-1}\mu\rangle} ).
\end{align*}
Applying Taylor's theorem for the determinant and exponential terms we obtain:
\begin{align*}
    A_1 &= 4\left[1 - \frac{2}{\gamma^2}\Tr(\Sigma) + \frac{2}{\gamma^4}(\Tr(\Sigma))^2 + \frac{4}{\gamma^4}\Tr(\Sigma^2)  + r_1\right]\\
    &+ 4\left[1 - \frac{2}{\gamma^2} \Tr(\Sigma) + \frac{2}{\gamma^4}(\Tr(\Sigma))^2 + \frac{2}{\gamma^4}\Tr(\Sigma^2) + r_2 \right] \\
    & - 8\left[1 - \frac{1}{2\gamma^2} \Tr(\Sigma) + \frac{1}{8\gamma^4}\Tr(\Sigma)^2 + \frac{1}{4\gamma^4}\Tr(\Sigma^2) + r_3 \right]\\ 
    & \times\left[1 - \frac{3}{2\gamma^2}\Tr(\Sigma) + \frac{9}{8\gamma^4}(\Tr(\Sigma))^2 + \frac{9}{4\gamma^4}\Tr(\Sigma^2)+ r_4\right],
\end{align*}
where to leading order $r_1, r_2, r_3, r_4 =O( {N^{3-6\alpha}} +N^{4-8\alpha})$.   After simplification we obtain $A_1 = \frac{4}{\gamma^4}\Tr(\Sigma^2) +r(N)$ where $r(N) =O(N^{3-6\alpha})$.  Similarly writing $A_2 = B_1 + B_2 + B_3$, we obtain:
\begin{align*}
B_1 &=  \frac{2}{\gamma^2}(1 - 2\Tr(\Sigma)/\gamma^2 + r_5)\left( -\langle \mu, \mu\rangle + \frac{4}{\gamma^2}\langle \mu, \Sigma \mu\rangle  + \frac{1}{2\gamma^2}\langle \mu,  \mu\rangle^2 + r_6\right),\\
 B_2  &= -\frac{2}{\gamma^2}(1 - 2\Tr(\Sigma)/\gamma^2 + r_7) \left(\langle \mu, \mu\rangle - \frac{2}{\gamma^2}\langle \mu, \Sigma \mu\rangle  +  r_8 \right),\\
 B_3 &= -\frac{8}{\gamma^2}\left(1 - 2\Tr(\Sigma)/\gamma^2 + r_9\right)\left(- \frac{1}{2}\langle \mu, \mu \rangle +\frac{1}{\gamma^2}\langle \mu, \Sigma \mu\rangle + \frac{1}{8\gamma^2}\langle \mu, \mu\rangle^2 + r_{10} \right),
\end{align*}

where $r_5, \ldots, r_{10}$ are remainder terms which satisfy $r_i =O(N^{3-6\alpha} + N^{4-8\alpha})$ for $i=5,\ldots, 10$.  Expanding $B_1 + B_2 + B_3$ and collecting into powers of $\gamma$, we see that the constant terms and the terms with denominator $\gamma^{2}$ cancel out. Collecting terms with $\gamma^{4}$ denominator gives $A_2  = \frac{4}{\gamma^4} \langle \mu, \Sigma \mu\rangle + r(N)$ where $r(d) =O(d^{3-6\alpha})$ is a remainder term containing the higher order terms.  Combining the leading order expressions for $A_1$ and $A_2$, and collecting together the remainder terms we obtain
$$
    \frac{\MMD^2_k(P,Q)}{\sqrt{\xi_{2}}} = \frac{\lVert \mu \rVert^2_2/\gamma^2 + r(N)}{\sqrt{4\Tr(\Sigma^2)/\gamma^4 + 4\langle \mu, \Sigma \mu \rangle/\gamma^4 + r'(N)}},
$$
where $r(N) =O(N^{2-4\alpha})$ and $r'(N)= O( N^{3-6\alpha})$
It follows that
\begin{equation}
\label{eq:mmd_xi_ratio_asymp}
\frac{\MMD^2_k(P,Q)}{\sqrt{\xi_{2}}}= \frac{\lVert \mu \rVert^2_2}{\sqrt{4\Tr(\Sigma^2) + 4\langle \mu, \Sigma \mu \rangle}}\left(\frac{1+ \frac{r(N)}{\lVert \mu \rVert^2_2/\gamma^2}}{\sqrt{1 + \frac{r'(N)}{4\Tr(\Sigma^2)/\gamma^4 + 4\langle \mu, \Sigma \mu \rangle/\gamma^4}}}\right).
\end{equation}
As discussed at the start of the proof $\lVert m_{N} \rVert^2_2=\Theta(N)$ and $ \Tr(\Sigma^{2}),\langle m_{N},\Sigma m_{N}\rangle = \Theta(N^{2})$ meaning that $\gamma^{2}r(N)/\norm{m_{N}}_{2}^{2} = O(N^{1-2\alpha})$ and so converges to zero. Likewise the fraction in the square root in the denominator converges to zero, meaning the term in the brackets of \eqref{eq:mmd_xi_ratio_asymp} converge to $1$ as $N\rightarrow \infty$, yielding the advertised result once numerator and denominator is divided by $N$ since
\begin{align*}
    N^{-2}\Tr(\Sigma^2) &\rightarrow \int_{\calD} \int_{\calD} k_{0}(s,t)dsdt = \lVert C_{k} \rVert^2_{HS},\\
N^{-2}\langle m_{N}, \Sigma m_{N}\rangle &\rightarrow \int_{\calD} \int_{\calD} m(s)k_{0}(s,t) m(t)dsdt = \lVert C_{k}^{1/2}m \rVert^2_{L^{2}(\calD)}.
\end{align*}

For the case of identity matrix substituting the corresponding scaling of $\Tr(\Sigma^{2}),\langle m_{N},\Sigma m_{N}\rangle$ in the relevant places will produce the result. 

\end{proof}

\begin{proof}[\textbf{Proof of Lemma \ref{lem:GP_median_heur}}]

We use  the standard fact that for a real valued random variables $\eta$ the following inequality holds $\lvert \bbE[\eta] - \text{Median}[\eta]\rvert\leq \text{Var}[\eta]^{\frac{1}{2}}$ and we will be using this inequality on $\norm{x-y}_{2}^{2}$ where $x\sim P, y\sim Q$.  For economy of notation let $\mu = \mu_{1}-\mu_{2}$ and $\Sigma = \Sigma_{1}+\Sigma_{2}$.
Using standard Gaussian integral identities we obtain
\begin{align*}
    \bbE[\norm{x-y}_{2}^{2}]^{2} & = (\Tr(\Sigma)+\norm{\mu}_{2}^{2})^{2},\\
    \bbE[\norm{x-y}_{2}^{4}] & = 2\Tr(\Sigma^{2}) + 4\langle\mu,\Sigma\mu\rangle + (\Tr(\Sigma)+\norm{\mu}_{2}^{2})^{2},
\end{align*}
therefore 
\begin{align*}
 \text{Var}[\norm{x-y}_{2}^{2}] & = \bbE[\norm{x-y}_{2}^{4}] - \bbE[\norm{x-y}_{2}^{2}]^{2} = 2\Tr(\Sigma)^{2} + 4\norm{\mu}_{2}^{2}\Tr(\Sigma).
\end{align*}  
Substituting into the inequality at the start of the proof
\begin{align*}
    \left\lvert\frac{\text{Median}[\norm{x-y}_{2}^{2}]}{\bbE[\norm{x-y}_{2}^{2}]}-1\right\rvert^{2} & \leq \frac{2\Tr(\Sigma)^{2} + 4\norm{\mu}_{2}^{2}\Tr(\Sigma)}{(\Tr(\Sigma)+\norm{\mu}_{2}^{2})^{2}} = 2\left(1-\frac{\norm{\mu}_{2}^{4}}{(\Tr(\Sigma)+\norm{\mu}_{2}^{2})^{2}}\right),
\end{align*}
which completes the first part of the proof. The special case follows from dividing the numerator and denominator of the fraction in the right hand side by $N^{2}$ and using the assumption that the kernels are continuous and the mesh satisfies the Riemann scaling property. Writing $m_{N}$ for the discretised version of $m_{1}-m_{2}$ gives  $N^{-2}\norm{m_{N}}_{2}^{4}\rightarrow \norm{m_{1}-m_{2}}_{L^{2}(\calD)}^{2}, N^{-1}\Tr(\Sigma)\rightarrow\Tr(C_{k_{1}}+C_{k_{2}}), N^{-1}\norm{ m_{N}}_{2}^{2}\rightarrow\norm{m_{1}-m_{2}}_{L^{2}(\calD)}^{2}$. 
\end{proof}

\subsection{Proofs for Section \ref{sec:kernel}}

\begin{lemma}\label{lem:pos_def}
	The SE-$T$ function is a kernel.
\end{lemma}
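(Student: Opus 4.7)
The plan is to reduce the claim to the well-known fact that the Gaussian kernel on a Hilbert space is positive definite, and then invoke closure of kernels under composition with an arbitrary map. Symmetry is immediate: since $\|T(x)-T(y)\|_{\calY}^{2}=\|T(y)-T(x)\|_{\calY}^{2}$, we have $k_{T}(x,y)=k_{T}(y,x)$ for all $x,y\in\calX$.

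For positive definiteness, I would first establish that the map $g(u,v)=e^{-\frac{1}{2}\|u-v\|_{\calY}^{2}}$ is a kernel on $\calY\times\calY$. Expanding the square of the norm yields the factorisation
\begin{align*}
g(u,v)=e^{-\frac{1}{2}\|u\|_{\calY}^{2}}\,e^{\langle u,v\rangle_{\calY}}\,e^{-\frac{1}{2}\|v\|_{\calY}^{2}}.
\end{align*}
The linear kernel $(u,v)\mapsto\langle u,v\rangle_{\calY}$ is positive definite (its Gram matrix is $A^{\top}A$ for $A$ the matrix of coordinates in any orthonormal frame containing $\mathrm{span}\{u_{1},\dots,u_{N}\}$). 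Pointwise products and convergent non-negative linear combinations of positive definite kernels are positive definite, so the Taylor series $e^{\langle u,v\rangle_{\calY}}=\sum_{n\geq 0}\langle u,v\rangle_{\calY}^{n}/n!$ is a kernel. Finally, multiplying by $f(u)f(v)$ with $f(u)=e^{-\frac{1}{2}\|u\|_{\calY}^{2}}$ preserves positive definiteness, since for any coefficients $\{a_{i}\}$ and points $\{u_{i}\}$,
\begin{align*}
\sum_{i,j}a_{i}a_{j}f(u_{i})f(u_{j})e^{\langle u_{i},u_{j}\rangle_{\calY}}=\sum_{i,j}(a_{i}f(u_{i}))(a_{j}f(u_{j}))e^{\langle u_{i},u_{j}\rangle_{\calY}}\geq 0.
\end{align*}
Hence $g$ is a kernel on $\calY$.

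The final step is the observation that if $g$ is a kernel on $\calY\times\calY$ and $T\colon\calX\to\calY$ is any map, then $(x,y)\mapsto g(T(x),T(y))$ is a kernel on $\calX\times\calX$: positive definiteness of the Gram matrix $\{g(T(x_{i}),T(x_{j}))\}$ is inherited directly from that of $g$ applied to the images $\{T(x_{i})\}\subset\calY$. Applying this with the $g$ above gives $k_{T}(x,y)=g(T(x),T(y))$, completing the argument. No step is truly the main obstacle; the only place where one must be a little careful is ensuring that the Taylor-series argument for $e^{\langle u,v\rangle_{\calY}}$ is justified on an infinite-dimensional space, but this is routine because each partial sum is positive semi-definite and pointwise convergence preserves positive semi-definiteness of the associated Gram matrices.
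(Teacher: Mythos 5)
Your proof is correct, and the final step — pulling back a kernel on $\calY$ through an arbitrary map $T$ so that positive definiteness of the Gram matrix $\{g(T(x_i),T(x_j))\}$ is inherited from that of $g$ on the images — is exactly the reduction the paper uses. Where you differ is in how you establish that the $T=I$ case, $g(u,v)=e^{-\frac{1}{2}\norm{u-v}_{\calY}^{2}}$, is positive definite. The paper cites Schoenberg's classical theorem that $\phi(\norm{u-v}_{\calY}^{2})$ is positive definite on a Hilbert space whenever $\phi$ is completely monotone, and notes $e^{-ar}$ is such a function. You instead give a self-contained elementary argument: factor out $e^{-\frac{1}{2}\norm{u}_{\calY}^{2}}e^{-\frac{1}{2}\norm{v}_{\calY}^{2}}$, expand $e^{\langle u,v\rangle_{\calY}}$ as a Taylor series of powers of the (positive definite) linear kernel, and use the Schur product theorem together with closure of positive definiteness under non-negative sums, pointwise limits, and multiplication by $f(u)f(v)$. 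Your route avoids an external citation and makes the mechanism transparent; the paper's route is shorter and buys more generality, since complete monotonicity immediately covers other radial choices such as $\phi(r)=(r+1)^{-1/2}$ underlying the IMQ-$T$ kernel of Definition \ref{def:IMQ_T}, whereas your factorisation is specific to the exponential. Both arguments are sound; your cautionary remark about justifying the Taylor-series step in infinite dimensions is well placed but, as you say, harmless, since positive semi-definiteness is a pointwise condition on finite Gram matrices and is preserved under pointwise convergence.
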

\begin{proof}[\textbf{Proof of Lemma \ref{lem:pos_def}}]
	Consider first the case $T=I$, it is shown in \citet[Theorem 3]{Schoenberg1938} that if $k(x,y) = \phi(\norm{x-y}_{\calY}^{2})$ for $\phi$ a completely monotone function then $k$ is positive definite on $\calY$ and it is well known that $e^{-ax}$ is such a function for $a >  0$ therefore $k_{I}$ is a kernel. Now take $k_{T}$ to be the SE-$T$ kernel then for any $N\in\bbN, \{a_{n}\}_{n=1}^{N}\subset\bbR, \{x_{n}\}_{n=1}^{N}\subset\calX$ we have $\sum_{n,m=1}^{N}a_{n}a_{m}k_{T}(x_{n},x_{m}) = \sum_{n,m=1}^{N}a_{n}a_{m}k_{I}(T(x_{n}),T(x_{m})) \geq 0$
	\end{proof}

\begin{proof}[\textbf{Proof of Theorem \ref{thm:gauss_rkhs}}]\label{proof:gauss_rkhs}
This proof uses the argument of \citet[Theorem 1]{Minh2009}. The plan is to first show the function space stated in the theorem is an RKHS and that the kernel is the SE-$T$ kernel so by uniqueness of kernel for RKHS we are done. This is done using the Aronszajn theorem \citep[Theorem 9]{Minh2009} which identifies the kernel as an infinite sum of basis functions, see also \citet[Theorem 2.4]{Paulsen2016}.

First we prove that $\calH_{k_{T}}(\calX)$ is a separable Hilbert space. That it is an inner product space is clear from the definition of the inner product and the assumption that $\lambda_{n} > 0$ for all $n\in\bbN$. The definition of the inner product means completeness of $\calH_{k_{T}}(\calX)$ equivalent to completeness of the weighted $l^{2}$ space given by 
$$l^{2}_{\lambda}(\Gamma) = \left\{(w_{\gamma})_{\gamma\in\Gamma}\colon \norm{(w_{\gamma})_{\gamma\in\Gamma}}_{l^{2}_{\lambda}(\Gamma)}^{2} \coloneqq \sum_{\gamma\in\Gamma}\frac{\gamma !}{\lambda^{\gamma}}w_{\gamma}^{2} < \infty\right\}
$$
which can be easily seen to be complete since $\Gamma$ is countable and $\gamma !/\lambda^{\gamma}$ is positive for all $\gamma\in\Gamma$. To see that $\calH_{k_{T}}(\calX)$ is separable observe that from the definition of the inner product, the countable set of functions $\phi_{\gamma}(x) = \sqrt{\lambda^{\gamma}/\gamma !}e^{-\frac{1}{2}\norm{Tx}_{\calX}^{2}}x^{\gamma}$
is orthonormal and spans $\calH_{k_{T}}(\calX)$ hence is an orthonormal basis. 

Next we prove that $H_{k_{T}}(\calX)$ is an RKHS. Expanding the kernel through the exponential function gives
\begin{align*}
	k_{T}(x,y) & = e^{-\frac{1}{2}\norm{Tx}_{\calX}^{2}}e^{-\frac{1}{2}\norm{Tx}_{\calX}^{2}}e^{\langle Tx,Ty\rangle_{\calX}} = e^{-\frac{1}{2}\norm{Tx}_{\calX}^{2}}e^{-\frac{1}{2}\norm{Tx}_{\calX}^{2}}\sum_{n=0}^{\infty}\frac{\langle Tx,Ty\rangle_{\calX}^{n}}{n!},
\end{align*} 
and by the assumption on $T$ we know $\langle Tx,Ty\rangle_{\calX}^{n} = \big(\sum_{m=1}^{\infty}\lambda_{m}x_{m}y_{m}\big)^{n} = \sum_{\lvert\gamma\rvert = n}\frac{n!}{\gamma !}\lambda^{\gamma}x^{\gamma}y^{\gamma}$ where $x_{m} = \langle x,e_{m}\rangle_{\calX}$, similarly for $y_{m}$, therefore
\begin{align*}
	k_{T}(x,y) = e^{-\frac{1}{2}\norm{Tx}_{\calX}^{2}}e^{-\frac{1}{2}\norm{Tx}_{\calX}^{2}}\sum_{\lvert\gamma\rvert \geq 0}\frac{\lambda^{\gamma}}{\gamma !}x^{\gamma}y^{\gamma} = \sum_{\lvert\gamma\rvert \geq 0}\phi_{\gamma}(x)\phi_{\gamma}(y).
\end{align*}
So for any $F\in \calH_{k_{T}}(\calX)$ we have $\langle F,k_{T}(\cdot,x)\rangle_{\calH_{k_{T}}(\calX)} = \sum_{\gamma\in\Gamma}\langle F,\phi_{\gamma}\rangle_{\calH_{k_{T}}(\calX)}\phi_{\gamma}(x) = F(x)$ so $k_{T}$ is a reproducing kernel of $\calH_{k_{T}}(\calX)$ so by uniqueness of reproducing kernels we have that $\calH_{k_{T}}(\calX)$ is the RKHS of $k_{T}$.
\end{proof} 

%%%%%%%%%%%%%%%%%%%%%%%%%%%%%%%%%%%%%%%%%%%%%%%%%%%%%%%%%%%%%

\begin{proof}[\textbf{Proof of Theorem \ref{thm:trace_class_char}}]\label{proof:trace_class_char} 
If  $k_{T}(x,y) = \hat{\mu}(x-y)$ for some Borel measure on $\calX$ then \eqref{eq:MMD_integral_version} lets us write 
\begin{align}
	&\MMD_{k_{T}}(P,Q)^{2}\nonumber \\
	& = \int_{\calX}\int_{\calX}k_{T}(x,y)d(P-Q)(x)d(P-Q)(y) \nonumber\\
	 & =\int_{\calX}\int_{\calX}\int_{\calX}e^{i\langle h,x-y\rangle_{\calX}}d\mu(h)d(P-Q)(x)d(P-Q)\nonumber \\
	 & = \int_{\calX}\left(\int_{\calX}e^{i\langle h,x\rangle_{\calX}}d(P-Q)(x)\right)\left(\int_{\calX}e^{-i\langle h,y\rangle_{\calX}}d(P-Q)(y)\right)d\mu(h) \label{eq:fubini}\\
	 & = \int_{\calX}\left\lvert \widehat{P}(h)-\widehat{Q}(h)\right\rvert^{2}d\mu(h),\nonumber
\end{align}
where \eqref{eq:fubini} is obtained by using Fubini's theorem to swap the integrals which is permitted since $\lvert e^{i\langle h,x-y\rangle_{\calX}}\lvert = 1$ and is therefore integrable with respect to $P$ and $Q$. 

Fourier transforms of finite Borel measures on $\calX$ are uniformly continuous \citep[Proposition 2.21]{Albeverio2015} therefore if $\mu$ has full support, meaning that $\mu(U) > 0$ for every open $U\subset\calX$, then we may immediately conclude that $\widehat{P} = \widehat{Q}$ and since the Fourier transform of finite Borel measures on $\calX$ is injective \citep[Proposition 1.7]{DaPrato2006} we may conclude that $P = Q$ meaning that $\Phi_{k_{T}}$ is injective. Assume $T$ is injective. If $k_{T}$ is the SE-$T$ kernel then \citet[Proposition 1.25]{DaPrato2006} shows that $\mu = N_{T^{2}}$ has full support since $T^{2}$ is also injective. Therefore $k_{T}$ is characteristic. For the converse direction we use the contrapositive and assume that $T$ is not injective meaning there exists $x^{*}\in\calX$ with $x^{*}\neq 0$ and $Tx = 0$. Set $P = \delta_{0}$ and $Q = \delta_{x^{*}}$ the Dirac measures on $0,x^{*}$ then $\Phi_{k_{T}}(P) = k_{T}(0,\cdot) = k_{T}(x^{*},\cdot) = \Phi_{k_{T}}(Q)$ therefore $\Phi_{k_{T}}$ is not injective so $k_{T}$ is not characteristic. 
\end{proof}

%%%%%%%%%%%%%%%%%%%%%%%%%%%%%%%%%%%%%%%%%%%%%%%%%%%%%%%%%%%%%

\begin{proof}[\textbf{Proof of Theorem \ref{thm:admissible_char}}]\label{proof:identity_char}
The result is a corollary of the next theorem which is where Theorem \ref{thm:minlos-sazanov} is employed. 

\begin{theorem}\label{thm:identity_char}
	Let $\calX$ be a real, separable Hilbert space and  $T=I$ then the SE-$T$ kernel is characteristic.
\end{theorem}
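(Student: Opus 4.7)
The plan is to establish injectivity of the mean embedding $\Phi_{k_I}$ by reducing to finite-dimensional marginals. Assume $P, Q \in \calP$ satisfy $\eMMD_{k_I}(P, Q) = 0$; then $\Phi_{k_I}P = \Phi_{k_I}Q$ in $\calH_{k_I}(\calX)$, which by the reproducing property and the Bochner-integral definition of the mean embedding is equivalent to the functional identity
\begin{equation*}
    \int_\calX e^{-\frac{1}{2}\norm{x - a}_\calX^2}\, dP(x) = \int_\calX e^{-\frac{1}{2}\norm{x - a}_\calX^2}\, dQ(x) \qquad \text{for every } a \in \calX.
\end{equation*}
The goal is to deduce $P = Q$.

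The key idea is a finite-dimensional projection trick. Fix an orthonormal basis $\{e_n\}_{n=1}^\infty$ of $\calX$, set $\calX_N = \mathrm{span}\{e_1, \ldots, e_N\}$, and let $\pi_N : \calX \to \calX_N$ be the orthogonal projection, with $\pi_N^\perp = I - \pi_N$. For $a \in \calX_N$, Pythagoras gives $\norm{x - a}_\calX^2 = \norm{\pi_N x - a}_\calX^2 + \norm{\pi_N^\perp x}_\calX^2$, which factors the integrand. Introducing the finite positive Borel measures on $\calX_N \cong \bbR^N$,
\begin{equation*}
    \rho_P^N = (\pi_N)_\# \bigl( e^{-\frac{1}{2}\norm{\pi_N^\perp \cdot}_\calX^2}\, dP \bigr), \qquad \rho_Q^N = (\pi_N)_\# \bigl( e^{-\frac{1}{2}\norm{\pi_N^\perp \cdot}_\calX^2}\, dQ \bigr),
\end{equation*}
the assumption translates into the convolution identity $\rho_P^N * g_N = \rho_Q^N * g_N$ on $\calX_N$, where $g_N(v) = e^{-\frac{1}{2}\abs{v}^2}$. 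Taking the Euclidean Fourier transform and dividing by the strictly positive Gaussian $\widehat{g_N}$ yields $\widehat{\rho_P^N} = \widehat{\rho_Q^N}$, and hence $\rho_P^N = \rho_Q^N$ as finite Borel measures, for every $N \in \bbN$.

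Next I would show that $\rho_P^N \to P$ weakly in $\calP$, viewing each $\rho_P^N$ as a measure on $\calX$ supported on $\calX_N$. For any bounded continuous $f: \calX \to \bbR$,
\begin{equation*}
    \int_\calX f\, d\rho_P^N = \int_\calX f(\pi_N x)\, e^{-\frac{1}{2}\norm{\pi_N^\perp x}_\calX^2}\, dP(x) \longrightarrow \int_\calX f(x)\, dP(x)
\end{equation*}
by dominated convergence, since $\pi_N x \to x$ and $\pi_N^\perp x \to 0$ in $\calX$ pointwise, $f$ is continuous, and the integrand is bounded by $\norm{f}_\infty$. The same holds for $Q$. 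Since $\rho_P^N = \rho_Q^N$ for every $N$, uniqueness of weak limits forces $P = Q$.

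The main obstacle is conceptual rather than technical: the positive definite function $z \mapsto e^{-\frac{1}{2}\norm{z}_\calX^2}$ fails the Sazonov continuity condition of Theorem \ref{thm:minlos-sazanov}, so unlike in Theorem \ref{thm:trace_class_char} it is \emph{not} the Fourier transform of any finite Borel measure on $\calX$. Thus the spectral-measure argument used in the trace-class case cannot be applied directly. The projection step above bypasses this by exploiting the fact that $(\pi_N)_\#$ produces a kernel on $\bbR^N$ for which Bochner is available, while the weight $e^{-\frac{1}{2}\norm{\pi_N^\perp x}_\calX^2}$ transfers the full infinite-dimensional information into the family of Gaussian-convolved marginals $\{\rho_P^N\}_{N}$.
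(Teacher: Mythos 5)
Your proof is correct, and it takes a genuinely different route from the paper's. The paper argues by contrapositive: assuming $P\neq Q$, it invokes the Minlos--Sazanov theorem (Theorem \ref{thm:minlos-sazanov}) to obtain continuity of $\widehat{P},\widehat{Q}$ with respect to norms induced by trace-class operators, locates a ball $B_{V}(x^{*},r)$ in such a norm on which $\lvert\widehat{P}-\widehat{Q}\rvert>\varepsilon/2$, approximates $I$ by injective trace-class operators $I_{m}$ so that $\MMD_{k_{I}}(P,Q)^{2}=\lim_{m}\int\lvert\widehat{P}-\widehat{Q}\rvert^{2}\,dN_{I_{m}}$, and then bounds $N_{I_{m}}(B_{V}(x^{*},r))$ below uniformly in $m$ via small-ball estimates for non-degenerate Gaussian measures. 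You instead exploit the exact Pythagorean factorisation $\norm{x-a}_{\calX}^{2}=\norm{\pi_{N}x-a}_{\calX}^{2}+\norm{\pi_{N}^{\perp}x}_{\calX}^{2}$ for $a$ in a finite-dimensional subspace, convert equality of the mean embeddings into the finite-dimensional deconvolution identity $\rho_{P}^{N}*g_{N}=\rho_{Q}^{N}*g_{N}$, solve it by dividing by the nowhere-vanishing Gaussian Fourier transform on $\bbR^{N}$, and recover $P=Q$ from the weak convergence $\rho_{P}^{N}\to P$, $\rho_{Q}^{N}\to Q$ (noting, as you implicitly do, that the $\rho^{N}$ are finite but not probability measures, which is harmless here). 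Each step checks out: the convolution identity is legitimate because $\pi_{N}^{\perp}a=0$ for $a\in\calX_{N}$, the Fourier uniqueness theorem for finite Borel measures on $\bbR^{N}$ applies, and dominated convergence with dominating constant $\norm{f}_{\infty}$ justifies the limit. Your route is more elementary and self-contained, needing only classical finite-dimensional Fourier analysis rather than Minlos--Sazanov and Gaussian measure ball estimates. What the paper's argument buys in exchange is a reusable template: the representation $\MMD_{k_{I}}(P,Q)^{2}=\lim_{m}\int\lvert\widehat{P}-\widehat{Q}\rvert^{2}dN_{I_{m}}$ and the trace-class approximation of $I$ are recycled essentially verbatim to prove Corollary \ref{cor:IMQ_characteristic} for the IMQ-$T$ kernel, whereas your factorisation is tied to the squared-exponential form (though it would extend to scale mixtures of Gaussians such as the IMQ kernel). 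Both proofs transfer from $T=I$ to general admissible $T$ by the same push-forward argument used in the paper's proof of Theorem \ref{thm:admissible_char}.
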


\begin{proof}[\textbf{Proof of Theorem \ref{thm:identity_char}}]
The idea of this proof is to use the contrapositive and assume $P\neq Q$ and conclude that $\MMD_{k_{I}}(P,Q) > 0$. The main tool shall be Theorem \ref{thm:minlos-sazanov} since $P\neq Q$ implies $\widehat{P}\neq \widehat{Q}$ and Theorem \ref{thm:minlos-sazanov} implies that these Fourier transforms vary slowly in some sense so there will be a set of big enough measure such that $\widehat{P}(x)\neq \widehat{Q}(x)$ for $x$ in this set, which will allow us to deduce $\MMD_{k_{I}}(P,Q) > 0$.

Suppose $P,Q$ are Borel probability measures on $\calX$ with $P\neq Q$ then $\widehat{P}\neq\widehat{Q}$ \citep[Proposition 1.7]{DaPrato2006} so there exists $x^{*}\in\calX, \varepsilon > 0$ such that $\lvert\widehat{P}(x^{*}) -\widehat{Q}(x^{*})\rvert > \varepsilon$. By Theorem \ref{thm:minlos-sazanov} there exists $S,R\in L^{+}_{1}(\calX)$ such that $\widehat{P}$ (respectively $\widehat{Q}$) is continuous with respect to the norm induced by $S$ (repectively $R$). For $r > 0$ let $B_{S}(x^{*},r) = \{x\in\calX\colon \langle S(x-x^{*}),x-x^{*}\rangle_{\calX} < r^{2}\}$ be the ball based at $x^{*}$ of radius $r$ with respect to the norm induced by $S$, and $B_{R}(x^{*},r)$ will denote the analogous ball with respect to the norm induced by $R$. By the continuity properties of $\widehat{P},\widehat{Q}$ there exists $r > 0$ such that
\begin{align*}
x\in B_{S}(x^{*},r) & \implies \lvert\widehat{P}(x)-\widehat{P}(x^{*})\rvert < \frac{\varepsilon}{4}\\
x\in B_{R}(x^{*},r) & \implies \lvert\widehat{Q}(x)-\widehat{Q}(x^{*})\rvert < \frac{\varepsilon}{4}.
\end{align*}
The set $B_{S}(x^{*},r)\cap B_{R}(x^{*},r)$ is non-empty since it contains $x^{*}$ and if $x\in B_{S}(x^{*},r)\cap B_{R}(x^{*},r)$ then by reverse triangle inequality
\begin{align}
	\lvert \widehat{P}(x)-\widehat{Q}(x)\rvert & = \lvert \widehat{P}(x)-\widehat{P}(x^{*}) + \widehat{P}(x^{*}) - \widehat{Q}(x^{*})+\widehat{Q}(x^{*}) - \widehat{Q}(x)\rvert \nonumber\\
	& \geq \lvert \widehat{P}(x^{*})-\widehat{Q}(x^{*})\rvert - \lvert\widehat{P}(x)-\widehat{P}(x^{*})\rvert - \lvert\widehat{Q}(x)-\widehat{Q}(x^{*})\rvert\nonumber\\
	& > \varepsilon - \frac{\varepsilon}{4} - \frac{\varepsilon}{4} = \frac{\varepsilon}{2}. \label{eq:varepsilon_lower_bound}
\end{align}

Now define the operator $U = S + R$ which is positive, symmetric and trace class since both $S$ and $R$ have these properties. Note that $B_{U}(x^{*},r)\subset B_{S}(x^{*},r)\cap B_{R}(x^{*},r)$ because
\begin{align*}
	\norm{x-x^{*}}_{U}^{2} &= \langle U(x-x^{*}),x-x^{*}\rangle_{\calX} \\
	&= \langle S(x-x^{*}),x-x^{*}\rangle_{\calX} + \langle R(x-x^{*}),x-x^{*}\rangle_{\calX} \\
	& = \norm{x-x^{*}}_{S}^{2} + \norm{x-x^{*}}_{R}^{2}.
\end{align*}

Since $U$ is a positive, compact, symmetric operator there exists a decomposition into its eigenvalues $\{\lambda_{n}\}_{n=1}^{\infty}$, a non-negative sequence converging to zero, and eigenfunctions $\{e_{n}\}_{n=1}^{\infty}$ which form an orthonormal basis of $\calX$. We will later need to associate a non-degenerate Gaussian measure with $U$. To this end define $V$ to be the positive, symmetric, trace class operator with eigenvalues $\{\rho_{n}\}_{n=1}^{\infty}$ where $\rho_{n} = \lambda_{n}$ if $\lambda_{n} > 0$ otherwise $\rho_{n} = n^{-2}$, and eigenfunctions $\{e_{n}\}_{n=1}^{\infty}$ inherited from $U$. Then by construction $V$ is injective, positive, symmetric and trace class. The $V$ induced norm dominates the $U$ induced norm therefore $B_{V}(x^{*},r)\subset B_{U}(x^{*},r)$ so for $x\in B_{V}(x^{*},r)$ we have $\lvert\widehat{P}(x)-\widehat{Q}(x)\rvert > \varepsilon/2$.

Now we construct an operator which will approximate $I$, define the operator $I_{m}x = \sum_{n=1}^{\infty}\omega_{n}^{(m)}\langle x,e_{n}\rangle_{\calX} e_{n}$ where $\omega_{n}^{(m)} = 1$ for $n\leq m$ and $\omega_{n}^{(m)} = n^{-2}$ for $n > m$ and $\{e_{n}\}_{n=1}^{\infty}$ is the eigenbasis of $V$, then $I_{m}\in L^{+}_{1}(\calX)$ for every $m\in\bbN$. It is easy to see $k_{I_{m}^{1/2}}$ converges pointwise to $k_{I}$ as $m\rightarrow\infty$ since $e^{-\frac{1}{2}x}$ is a continuous function on $\bbR$ and $\norm{x}_{I_{m}^{1/2}}^{2}\rightarrow\norm{x}^{2}_{I} = \norm{x}_{\calX}^{2}$ however clearly $I_{m}^{1/2}$ does not converge in operator norm to $I$ since $I$ is not a compact operator. Since $k_{I_{m}} \leq 1$ for all $m$ we may use the bounded convergence theorem to obtain
\begin{align}
	\MMD_{k_{I}}(P,Q)^{2} & = \int_{\calX}\int_{\calX}k_{I}(x,y)d(P-Q)(x)d(P-Q)(y) \nonumber\\
	& = \lim_{m\rightarrow\infty}\int_{\calX}\int_{\calX}k_{I_{m}^{1/2}}(x,y)d(P-Q)(x)d(P-Q)(y) \nonumber\\
	& = \lim_{m\rightarrow\infty}\int_{\calX}\lvert\widehat{P}(x)-\widehat{Q}(x)\rvert^{2}dN_{I_{m}}(x),\label{eq:MMD_spec}
\end{align}
where \eqref{eq:MMD_spec} is by the same reasoning as in the proof of Theorem \ref{thm:trace_class_char}. In light of the lower bound we derived earlier over $B_{V}(x^{*},r)$ of the integrand in \eqref{eq:varepsilon_lower_bound} 
\begin{align*}
	\MMD_{k_{I}}(P,Q)^{2} & = \lim_{m\rightarrow\infty}\int_{\calX}\lvert\widehat{P}(x)-\widehat{Q}(x)\rvert^{2}dN_{I_{m}}(x) \geq \lim_{m\rightarrow\infty}\int_{B_{V}(x^{*},r)}\frac{\varepsilon^{2}}{4}dN_{I_{m}}(x),
\end{align*}
so if we can lower bound $N_{I_{m}}(B_{V}(x^{*},r))$  by a positive constant indepdendent of $m$ then we are done. This set is the ball with respect to $V\in L^{+}_{1}(\calX)$ which is a somehow large set, see the discussion after Theorem \ref{thm:minlos-sazanov}, and we will use a push-forward of measure argument.

Define $T(x) = x-x^{*}$ then $N_{I_{m}}(B_{V}(x^{*},r)) = T_{\#}N_{I_{m}}(B_{V}(0,r))$ and \citep[Proposition 1.17]{DaPrato2006} tells us $T_{\#}N_{I_{m}}(B_{V}(0,r)) = N_{-x^{*},I_{m}}(B_{V}(0,r))$. Next we note that $N_{-x^{*},I_{m}}(B_{V}(0,r)) = V^{\frac{1}{2}}_{\#}N_{-x^{*},I_{m}}(B(0,r))$ and \citep[Proposition 1.18]{DaPrato2006} tells us that 
\begin{align*}
 V^{\frac{1}{2}}_{\#}N_{-x^{*},I_{m}}(B(0,r)) = N_{-V^{\frac{1}{2}}x^{*},V^{\frac{1}{2}}I_{m}V^{\frac{1}{2}}}(B(0,r)).
\end{align*}

For ease of notation let $y^{*} = -V^{\frac{1}{2}}x^{*}$ and since we choose to construct $I_{m}$ from the eigenbasis of $V$ we have $V_{m}x\coloneqq V^{\frac{1}{2}}I_{m}V^{\frac{1}{2}}x = \sum_{n\in\bbN}\rho_{n}^{(m)}\langle x,e_{n}\rangle_{\calX} e_{n}$ where $\rho_{n}^{(m)} = \rho_{n}$ for $n\leq m$ and $\rho_{n}^{(m)} = \rho_{n}n^{-2}$ for $n > m$ so $V_{m}\in L^{+}_{1}(\calX)$ and is injective for every $m\in\bbN$. We follow the proof of \citep[Proposition 1.25]{DaPrato2006} and define the sets 
\begin{align*}
	A_{l} = \left\{x\in\calX\colon\sum_{n=1}^{l}\langle x,e_{n}\rangle_{\calX}^{2} \leq \frac{r^{2}}{2}\right\} \quad
	B_{l} = \left\{x\in\calX\colon\sum_{n=l+1}^{\infty}\langle x,e_{n}\rangle_{\calX}^{2} < \frac{r^{2}}{2}\right\}.
\end{align*}
Since $V_{m}$ is non-degenerate for every $m\in\bbN$ we have that for every $l\in\bbN$ the events $A_{l}, B_{l}$ are independent under $N_{y^{*},V_{m}}$ \citep[Example 1.22]{DaPrato2006} meaning $\forall m,l\in\bbN$ we have
\begin{align*}
	N_{y^{*},V_{m}}(B(0,r)) \geq N_{y^{*},V_{m}}(A_{l}\cap B_{l}) = N_{y^{*},V_{m}}(A_{l})N_{y^{*},V_{m}}(B_{l}),
\end{align*}
and by the measure theoretic Chebyshev inequality, for every $l\in\bbN$
\begin{align*}
	N_{y^{*},V_{m}}(B_{l})  \geq 1 - N_{y^{*},V_{m}}(B^{c}_{l}) & \geq 1 - \frac{2}{r^{2}}\sum_{n=l+1}^{\infty}\int_{\calX}\langle x,e_{n}\rangle_{\calX}^{2} dN_{y^{*},V_{m}} \\
	& = 1 - \frac{2}{r^{2}}\left(\sum_{n=l+1}\rho^{(m)}_{n} + \langle y^{*},e_{n}\rangle_{\calX}^{2}\right) \\
	& \geq 1 - \frac{2}{r^{2}}\left(\sum_{n=l+1}^{\infty}\rho_{n} + \langle y^{*},e_{n}\rangle_{\calX}^{2}\right).
\end{align*}

As the final line involves the tail of a finite sum with no dependcy on $m$ there exists an $L\in\bbN$ such that $N_{y^{*},V_{m}}(B_{L}) > \frac{1}{2}$ for every $m\in\bbN$ and $l\geq L$. Note that for $m > L$ we have $N_{y^{*},V_{m}}(A_{L}) = N_{y^{*},V}(A_{L})$ since $A_{L}$ depends only on the first $L$ coordinates and $\rho_{n}^{(m)} = \rho_{n}$ for $n\leq L$ if $m > L$. So for $m > L$
\begin{align*}
	N_{y^{*},V_{m}}(A_{L}) = N_{y^{*},V}(A_{L}) \geq N_{y^{*},V}\left(B\left(0,\frac{r}{\sqrt{2}}\right)\right) > c > 0,
\end{align*}
for some $c$ since $V$ is non-degenerate \citep[Proposition 1.25]{DaPrato2006}. 

Overall we have shown that there exists an $L\in\bbN$ such that for $m > L$ we have $N_{I_{m}}(B_{V}(x^{*},r)) = N_{y^{*},V_{m}}(B(0,r)) > \frac{c}{2}$. Therefore, by substituting back into the lower bound for $\MMD_{k_{I}}(P,Q)^{2}$
\begin{align*}
	\MMD_{k_{I}}(P,Q)^{2} & = \lim_{m\rightarrow\infty}\int_{\calX}\lvert\widehat{P}(x)-\widehat{Q}(x)\rvert^{2}dN_{I_{m}}(x) \\
	& \geq \lim_{m\rightarrow\infty}\int_{B_{V}(x^{*},r)}\frac{\varepsilon^{2}}{4}dN_{I_{m}}(x)  > \frac{\varepsilon^{2}c}{8} > 0,
\end{align*}
which implies by contrapositive that $k_{I}$ is characteristic. 
\end{proof}

With Theorem \ref{thm:identity_char} proved we proceed to prove Theorem \ref{thm:admissible_char}. By \eqref{eq:MMD_integral_version}
\begin{align}
	\text{MMD}_{k_{T}}& (P,Q)^{2}  \nonumber\\
	& = \int \int k_{T}(x,x')dP(x)dP(x') + \int\int k_{T}(y,y')dQ(y)dQ(y')\nonumber \\
	&\qquad-2\int\int k_{T}(x,y)dQ(x)dP(y) \nonumber \\
	& = \int \int k_{I}(x,x')dT_{\#}P(x)dT_{\#}P(x') + \int\int k_{I}(y,y')dT_{\#}Q(y)dT_{\#}Q(y')\nonumber \\
	&\qquad-2\int\int k_{I}(x,y)dT{\#}P(x)dT_{\#}Q(y) \\
	& = \MMD_{k_{I}}(T_{\#}P,T_{\#}Q)^{2}.\nonumber
\end{align}
Using Theorem \ref{thm:identity_char} we know that if $\text{MMD}_{k_{T}}(P,Q) = 0$ then $T_{\#}P = T_{\#}Q$ and all that is left to show is that the assumption on $T$ implies $P = Q$. By the definition of push-forward measure we know that for every $B\in\calB(\calY)$ we have the equality $P(T^{-1}B) = Q(T^{-1}B)$. By the assumptions on $\calX,\calY, T$ we know that $T(A)\in\calB(\calY)$ for every $A\in\calB(\calX)$ \citep[Theorem 15.1]{Kechris1995}. Hence for any $A\in\calB(\calX)$ take $B = T(A)$ then $P(A) = P(T^{-1}B) = Q(T^{-1}B) = Q(A)$, which shows $P=Q$. 
\end{proof}

\begin{proof}[\textbf{Proof of Proposition \ref{prop:integral_kernel}}]
For any $N\in\bbN$ take any $\{a_{n}\}_{n=1}^{N}\subset\bbR,\{x_{n}\}_{n=1}^{N}\subset\calX$ then
\begin{align*}
	\sum_{n,m=1}^{N}k_{C,k_{0}}(x_{n},x_{m}) = \int_{\calX}\sum_{n,m=1}^{N}k_{0}(\langle x_{n},h\rangle_{\calX},\langle x_{m},h\rangle_{\calX})dN_{C}(h) \geq 0,
\end{align*}
since this is the integral of a non-negative quantity as $k_{0}$ is a kernel. Symmetry of $k_{C,k_{0}}$ follows since $k_{0}$ is symmetric meaning $k_{C,k_{0}}$ is a kernel. Expanding $k_{0}$ using its spectral measure we have
\begin{align*}
	k_{C,k_{0}}(x,y) = \int_{\calX}\int_{\bbR}e^{i\langle x-y,rh\rangle_{\calX}}d\mu(r)dN_{C}(h) = \hat{\nu}(x-y),
\end{align*}
where $\nu(A) = \int_{\calX}\int_{\bbR}\mathds{1}_{A}(rh)d\mu(r)dN_{C}(h)$ for all $A\in\calB(\calX)$. This is the law of the $\calX$ valued random variable $\xi X$ where $\xi\sim \mu$ and $X\sim N_{C}$ are independent. We will show that $\nu$ has full support on $\calX$ from which is follows that $k_{C,k_{0}}$ is characteristic by following the proof of Theorem \ref{thm:trace_class_char}.

Fix any open ball $B = B(h,r)\subset\calX$ then given the assumption on $\mu$ by intersecting with $(0,\infty)$ or $(-\infty,0)$ we may assume that $a,b$ have the same sign. Assume that $a,b > 0$, the proof for when $a,b < 0$ is analogous. We first treat the case $h\neq 0$. Set $\delta = \min(\frac{1}{2}(\frac{b}{a}-1),\frac{r}{2\norm{h}_{\calX}})$ so that $(a,a(1+\delta))\subset(a,b)$. Now consider the ball $B' = B(\frac{h}{a(1+\delta)},\frac{r}{4a(1+\delta)})$, take any $c\in(a,a(1+\delta))$ and any $x\in B'$ then
\begin{align*}
	\norm{cx-h}_{\calX}& \leq \norm[\bigg]{\xi x - \frac{\xi h}{a(1+\delta)}}_{\calX} + \norm[\bigg]{\frac{\xi h}{a(1+\delta)} - h}_{\calX} \\
	&\leq \frac{cr}{4a(1+\delta)} + \norm{h}_{\calX}\bigg(1-\frac{c}{a(1+\delta)}\bigg)\\
	& < \frac{r}{4} + \norm{h}_{\calX}\bigg(1-\frac{1}{1+\delta}\bigg) < \frac{r}{4}+\frac{r}{2} < r.
\end{align*}
Therefore for any $c\in(a,a(1+\delta))$ we have $cB'\subset B$. Hence $\bbP(\xi X\in B) = \nu(B)\geq \mu\big((a,a(1+\delta))\big)N_{C}(B') > 0$ by the assumptions on $\mu$ and the way $N_{C}$ is non-degenerate. 

The case $h = 0$ is analogous, take $B' = B(0,\frac{r}{2b})$ then for every $c\in(a,b)$ we have $cB'\subset B$ and we again conclude $\nu(B) > 0$. 
\end{proof}

\begin{proof}[\textbf{Proof of Corollary \ref{cor:IMQ_characteristic}}] The idea of the proof is to represent the IMQ-$T$ kernel as an integral of the SE-$T$ kernel then use the same limit argument as in the proof of Theorem \ref{thm:identity_char} and push-forward argument of Theorem \ref{thm:admissible_char}.

Throughout this proof $k^{\text{IMQ}}_{T}$ and $k^{\text{SE}}_{T}$ will denote the IMQ-$T$ and SE-$T$ kernels respectively. By the same proof technique as Theorem \ref{thm:admissible_char} is suffices to prove that $k^{\text{IMQ}}_{I}$ is characteristic. Let $\{e_{n}\}_{n=1}^{\infty}$ be any orthonormal basis of $\calX$ and let $I_{m}x = \sum_{n=1}^{m}\langle x,e_{n}\rangle_{\calX} e_{n}$ so that $I_{m}$ converges to $I$ pointwise. Then by the same limiting argument in the proof of Theorem \ref{thm:identity_char} using bounded convergence theorem, letting $N_{1}$ be the $\calN(0,1)$ measure on $\bbR$ we have
\begin{align}
	\MMD_{k^{\text{IMQ}}_{I}}(P,Q)^{2} & = \lim_{m\rightarrow\infty}\int_{\calX}\int_{\calX}k_{I_{m}}^{\text{IMQ}}(x,y)d(P-Q)(x)d(P-Q)(y)\nonumber \\
	& = \lim_{m\rightarrow\infty}\int_{\calX}\int_{\calX}\int_{\bbR}k_{I_{m}}^{\text{SE}}(zx,zy)dN_{1}(z)d(P-Q)(x)d(P-Q)(y) \label{swap_IMQ} \\ 
	& = \int_{\bbR}\MMD_{k_{I}^{\text{SE}}}(z_{\#}P,z_{\#}Q)^{2}dN_{1}(z), \label{fubini_bounded}
\end{align}
where \eqref{swap_IMQ} is from the integral representation of $k_{I_{m}}^{\text{IMQ}}$ in \eqref{eq:IMQ_kernel} which can be used since $I_{m}\in L^{+}_{1}(\calX)$, \eqref{fubini_bounded} is obtained by using Fubini's theorem and bounded convergence theorem and $z_{\#}P$ denotes the push-forward of the measure under the linear map from $\calX$ to $\calX$ defined as multiplication by the scalar $z$. The integrand in \eqref{fubini_bounded} can be rewritten as
\begin{align*}
	\MMD_{k_{I}^{\text{SE}}}(z_{\#}P,z_{\#}Q)^{2} &  = \int_{\calX}\int_{\calX}k_{I}^{\text{SE}}(x,y)d(z_{\#}P-z_{\#}Q)(x)d(z_{\#}P-z_{\#}Q)(y) \\
& = \int_{\calX}\int_{\calX}k_{I}^{\text{SE}}(zx,zy)d(P-Q)(x)d(P-Q)(y) \\
& = \int_{\calX}\int_{\calX}e^{-\frac{z^{2}}{2}\norm{x-y}_{\calX}^{2}}d(P-Q)(x)d(P-Q)(y) \\
& = \MMD_{k_{z^{2}I}^{\text{SE}}}(P,Q)^{2},
\end{align*}
which makes it clear that $\MMD_{k_{I}^{\text{SE}}}(z_{\#}P,z_{\#}Q)^{2}$ is a continuous, non-negative function of $z$ and equals $0$ if and only if $z = 0$. Using this we deduce
\begin{align*}
	\MMD_{k_{I}^{\text{IMQ}}}(P,Q)^{2} & = \int_{\bbR}\MMD_{k_{I}^{\text{SE}}}(z_{\#}P,z_{\#}Q)^{2}dN_{\sigma^{2}}(z) \\
	& = \int_{\bbR}\MMD_{k_{z^{2}I}^{\text{SE}}}(P,Q)^{2}dN_{\sigma^{2}}(z) > 0,
\end{align*}
since $N_{\sigma^{2}}$ has strictly positive density. This proves that the IMQ-$I$ kernel is characteristic and by the same push-forward argument as in the proof of Theorem \ref{thm:admissible_char} we may conclude that the IMQ-$T$ kernel is characteristic too.
\end{proof}

\subsection{Proofs for Section \ref{sec:inf_dim_MMD}}
\begin{proof}[\textbf{Proof of Proposition \ref{prop:approx_MMD}}]\label{proof:approx_MMD}
For any $i,j$ using the assumption on $k$ we have
\begin{align}
\lvert k(x_{i},y_{j}) - k(\calR\calI x_{i},\calR\calI y_{j})\rvert 
&\leq L\norm{x_{i}-y_{j}-\calR\calI x_{i}+\calR\calI y_{j}}_{\calX}\label{eq:reverse_TI}\\
&\leq L\big(\norm{\calR\calI x_{i}-x_{i}}_{\calX} + \norm{\calR \calI y_{j}-y_{j}}_{\calX}\big),\label{eq:lip_bound}
\end{align}
where \eqref{eq:reverse_TI} is by assumption and \eqref{eq:lip_bound} uses the triangle inequality. Using \eqref{eq:MMD_h_representation} gives
\begin{align}
	&\bigg\lvert\widehat{\MMD}_{k}(X_{n},Y_{n})^{2} - \widehat{\MMD}_{k}(\calR\calI{X}_{n},\calR\calI{Y}_{n})^{2}\bigg\rvert \nonumber\\
	&  \leq \frac{1}{n(n-1)}\sum_{i\neq j}^{n}\lvert h(z_{i},z_{j})-h(\calR\calI {z}_{i},\calR\calI {z}_{j})\rvert \nonumber\\
	& \leq \frac{2L}{n(n-1)}\sum_{i\neq j}^{n}\norm{\calR\calI x_{i}-x_{i}}_{\calX} + \norm{\calR\calI x_{j}-x_{j}}_{\calX}\nonumber\\
	& \qquad + \norm{\calR\calI y_{i}-y_{i}}_{\calX} + \norm{\calR\calI y_{j}-y_{j}}_{\calX}\label{eq:expand_h}\\
	& =\frac{4L}{n}\sum_{i=1}^{n}\norm{\calR\calI x_{i}-x_{i}}_{\calX} + \norm{\calR\calI y_{i}-y_{i}}_{\calX},\label{eq:collect_terms}
\end{align}
where \eqref{eq:expand_h} follows from expanding using the definition of $h$ in Section \ref{sec:RKHS} and using the triangle inequality and \eqref{eq:collect_terms} follows from counting the number of pairs of indices in the sum. 
\end{proof}

\begin{proof}[\textbf{Proof of Corollary \ref{cor:approx_T}}]
    This can be deduced by the Lipschitz constants of $e^{-x^{2}/2}$ and $(x^{2}+1)^{-1/2}$. Then the proof of Proposition \ref{prop:approx_MMD} may be continued in the same manner. 
\end{proof}

\begin{proof}[\textbf{Proof of Theorem \ref{thm:recon_normal}}]
Define the two random variables
\begin{align*}
A_{n} & = n^{\frac{1}{2}}\big(\widehat{\MMD}_{k}(\calR\calI{X}_{n},\calR\calI{Y}_{n})^{2} - \widehat{\MMD}_{k}(X_{n},Y_{n})^{2}\big) \\
B_{n} &= n^{\frac{1}{2}}\big(\widehat{\MMD}_{k}(X_{n},Y_{n})^{2}-\MMD_{k}(P,Q)^{2}\big).
\end{align*} 
It is known $B_{n}\xrightarrow[]{d}\calN(0,\xi)$ \citep[Corollay 16]{Gretton2012} so the proof is complete by Slutsky's theorem if $A_{n}\xrightarrow[]{\bbP}0$. Fix any $\varepsilon > 0$ then by Proposition \ref{prop:scaling}
\begin{align}
	\bbP(\abs{A_{n}}&> \varepsilon)  \leq \bbP\bigg(\frac{4L}{n^{\frac{1}{2}}}\sum_{i=1}^{n}\norm{\calR\calI{x}_{i}-x_{i}}_{\calX} + \norm{\calR\calI{y}_{i}-y_{i}}_{\calX} > \varepsilon\bigg)\nonumber\\
	& \leq \frac{4L}{\varepsilon n^{\frac{1}{2}}}\bbE\bigg[\sum_{i=1}^{n}\norm{\calR\calI{x}_{i}-x_{i}}_{\calX} + \norm{\calR\calI{y}_{i}-y_{i}}_{\calX}\bigg]\label{eq:markov}\\
	& = \frac{4Ln^{\frac{1}{2}}}{\varepsilon}\bbE[\norm{\calR\calI{x}-x}_{\calX} + \norm{\calR\calI{y}-y}_{\calX}]\label{eq:iid}\\
	& \rightarrow 0, \label{eq:0}
\end{align}
where \eqref{eq:markov} is by Markov's inequality, \eqref{eq:iid} is by the assumption that the samples from $P,Q$ and disctretisation $U,V$ are i.i.d. across samples and \eqref{eq:0} is by assumption. 
\end{proof}

%%%%%%%%%%%%%%%%%%%%%%%%%%%%%%%%%%%%%%%%%%%%%%%%%%%%%%%%%%%%%

%%%%%%%%%%%%%%%%%%%%%%%%%%%%%%%%%%%%%%%%%%%%%%%%%%%%%%%%%%%%%

\begin{proof}[\textbf{Proof of Theorem \ref{thm:embedding_of_GP}}]\label{proof:embedding_GP} Note that $\Phi_{k_{T}}(N_{a,S})(x) = \Phi_{k_{I}}(N_{Ta,TST})(Tx)$ \citep[Proposition 1.18]{DaPrato2006}. The proof simply uses \citep[Proposition 1.2.8]{DaPrato2002} to calculate the Gaussian integrals. 
\begin{align}
	\Phi_{k_{I}}&(N_{a,S})(x) = \int_{\calX}e^{-\frac{1}{2}\langle x-y,x-y\rangle_{\calX}}dN_{a,S}(y) \nonumber\\
	& = e^{-\frac{1}{2}\langle a-x,a-x\rangle_{\calX}}\int_{\calX}e^{-\frac{1}{2}\langle y,y\rangle_{\calX}} e^{-\langle y,a-x\rangle_{\calX}}dN_{S}(y)\nonumber\\
	& = \det(I+S)^{-\frac{1}{2}}e^{-\frac{1}{2}\langle a-x,a-x\rangle_{\calX}}e^{\frac{1}{2}\norm{(I+S)^{-\frac{1}{2}}S^{\frac{1}{2}}(a-x)}_{\calX}^{2}} \label{eq:prop1.2.8}\\
	& = \det(I+S)^{-\frac{1}{2}}e^{-\frac{1}{2}\langle a-x,a-x\rangle_{\calX}}e^{\frac{1}{2}\langle S^{\frac{1}{2}}(I+S)^{-1}S^{\frac{1}{2}}(a-x),a-x\rangle_{\calX}} \nonumber\\
	& = \det(I + S)^{-\frac{1}{2}}e^{-\frac{1}{2}\langle (I-S^{\frac{1}{2}}(I+S)^{-1}S^{\frac{1}{2}})(a-x),(a-x)\rangle_{\calX}}\nonumber \\
	& = \det(I+S)^{-\frac{1}{2}}e^{-\frac{1}{2}\langle (I+S)^{-1}(x-a),x-a\rangle_{\calX}},
\end{align}
where \eqref{eq:prop1.2.8} is due to \citet[Proposition 1.2.8]{DaPrato2002}. The last equality is due to the Sherman-Morrison-Woodbury identity for operators \citep[Theorem 3.5.6]{Hsing2015}. Substituting in $Ta$ for $a$, $STS$ for $S$ and $Tx$ for $x$ gives the desired expression, as discussed at the start of the proof. 
\end{proof}

\begin{proof}[\textbf{Proof of Theorem \ref{thm:MMD_two_GPs}}]\label{proof:MMD_two_GPs} The idea of the proof is that $\MMD_{k_{T}}(P,Q)^{2}$ is simply double integrals of the kernel with respect to Gaussian measures. One integral was completed in Theorem \ref{thm:embedding_of_GP} and we apply \citet[Proposition 1.2.8]{DaPrato2002} again. Note $\MMD_{k_{T}}(N_{a,S},N_{b,R})^{2} = \MMD_{k_{I}}(N_{Ta,TST},N_{Tb,TRT})^{2}$ so it suffices to do the calculations for $k_{I}$ and substitute the other values in. Also since $k_{T}$ is translation invariant we may without loss of generality assume $a = 0$ and replace $b$ with $a-b$ at the end.
\begin{align}
	\int_{\calX}&\int_{\calX}k_{I}(x,y)dN_{S}(x)dN_{b,R}(y)\nonumber \\
	& = \det(I+S)^{-\frac{1}{2}}\int_{\calX}e^{-\frac{1}{2}\langle(I+S)^{-1}(y-b),y-b\rangle_{\calX}}dN_{R}(y)\label{substitution}\\
	& = \det(I+S)^{-\frac{1}{2}}e^{-\frac{1}{2}\langle(I+S)^{-1}b,b\rangle_{\calX}}\int_{\calX}e^{-\frac{1}{2}\langle(I+S)^{-1}y,y\rangle_{\calX}}e^{\langle y,(I+S)^{-1}b\rangle_{\calX}}dN_{R}(y)\nonumber\\
	& = \det(I+S)^{-\frac{1}{2}}\det\big(I+R^{\frac{1}{2}}(I+S)^{-1}R^{\frac{1}{2}}\big)^{-\frac{1}{2}}\label{use_da_prato}\\
	&\hspace{0.7cm}\times e^{-\frac{1}{2}\langle(I+S)^{-1}b,b\rangle_{\calX}}e^{-\frac{1}{2}\langle (I+S)^{-1}R^{\frac{1}{2}}(I+R^{\frac{1}{2}}(I+S)^{-1}R^{\frac{1}{2}})^{-1}R^{\frac{1}{2}}(I+S)^{-1}b,b\rangle_{\calX}}\nonumber\\
	& = \det(I+S)^{-\frac{1}{2}}\det\big(I+R^{\frac{1}{2}}(I+S)^{-1}R^{\frac{1}{2}}\big)^{-\frac{1}{2}}\label{rearrange}\\
	& \hspace{0.7cm}\times e^{-\frac{1}{2}(I-(I+S)^{-1}R^{\frac{1}{2}}(I+R^{\frac{1}{2}}(I+S)^{-1}R^{\frac{1}{2}})^{-1}R^{\frac{1}{2}})(I+S)^{-1}b,b\rangle_{\calX}}\nonumber\\
	& = \det(I+S)^{-\frac{1}{2}}\det\big(I+R^{\frac{1}{2}}(I+S)^{-1}R^{\frac{1}{2}}\big)^{-\frac{1}{2}}e^{-\frac{1}{2}\langle(I+S+R)^{-1}b,b\rangle_{\calX}},\label{woodbury}
\end{align}
where \eqref{substitution} is obtained by substituting the result of Theorem \ref{thm:embedding_of_GP}, \eqref{use_da_prato} is applying \citet[Proposition 1.2.8]{DaPrato2002}, \eqref{rearrange} is just rearranging terms and \eqref{woodbury} is using the Sherman-Morrison-Woodbury identity for operators \citep[Theorem 3.5.6]{Hsing2015}. The proof is completed by using the expression of $\MMD$ in terms of three double integrals and substituting in the appropriate values of $S,R,b$ inline with the description at the start of the proof. In particular when $b = 0$ and $S = R$ 
\begin{align*}
	\det(I+S)\det\big(I + S^{\frac{1}{2}}(I+S)^{-1}S^{\frac{1}{2}}\big)& =  \det\big((I+S)(I+(I+S)^{-1}S)\big) \\
	& = \det(I+2S),
\end{align*}
by the Sherman-Morrison-Woodbury identity for operators. 
\end{proof}

%%%%%%%%%%%%%%%%%%%%%%%%%%%%%%%%%%%%%%%%%%%%%%%%%%%%%%%%%%%%%

\begin{proof}[\textbf{Proof of Theorem \ref{thm:variance_of_estimator_mean_shift}}]

A more general result for which Theorem \ref{thm:variance_of_estimator_mean_shift} is a specific case shall be proved. 
\begin{theorem}\label{thm:variance_of_estimator}
	Let $P = N_{a,S}, Q = N_{b,R}$ be two non-degenerate Gaussian measures on $\calX$, $C\in L^{+}(\calX)$ and assume $C,S,R$ all commute then when using the SE-$T$ kernel
	\begin{align*}
	\xi_{1} & = \alpha(T,S,R,a,b) + \alpha(T,R,S,a,b) \\
	\xi_{2} & = \beta(T,S,R,a,b) + \beta(T,R,S,a,b),
	\end{align*}
	where
	\begin{align*}
	\alpha (T,S,R,a,b) & =\det((I+TST)(I+3TST))^{-\frac{1}{2}} - \det\big(I+2TST\big)^{-1} \\
& + \det\big((I+TRT)(I+T(2S+R)T)\big)^{-\frac{1}{2}}e^{-\langle (I+T(2S+R)T)^{-1}T(a-b),T(a-b)\rangle_{\calX}} \\
& -\det\big(I+T(S+R)T\big)^{-1}e^{-\langle (I+T(S+R)T)^{-1}T(a-b),T(a-b)\rangle_{\calX}} \\
& -2\det(\Sigma_{S})^{-\frac{1}{2}}e^{-\frac{1}{2}\langle (I+2TST)\Sigma_{S}^{-1}T(a-b),T(a-b)\rangle_{\calX}} \\
& +2\det\big((I+2TST)(I+T(S+R)T)\big)^{-\frac{1}{2}}e^{-\frac{1}{2}\langle (I+T(S+R)T)^{-1}T(a-b),T(a-b)\rangle_{\calX}}, \\
\vspace{0.5cm}\\
 \beta(T,S,R,a,b) & = \det(I+4TST)^{-\frac{1}{2}} - \det(I+2TST)^{-1} \\
& + \det\big(I + 2T(S +R)T\big)^{-\frac{1}{2}}e^{-\langle (I+2T(S+R)T)^{-1}(a-b),a-b\rangle_{\calX}} \\
& -\det\big(I+T(S+R)T\big)^{-1}e^{-\langle (I+T(S+R)T)^{-1}(a-b),a-b\rangle_{\calX}} \\
& + 4\det\big((I+T(S+R)T)(I+2TST)\big)^{-\frac{1}{2}}e^{-\frac{1}{2}\langle (I+T(S+R)T)^{-1}(a-b),a-b\rangle_{\calX}} \\
& -  4\det(\Sigma_{S})^{-\frac{1}{2}}e^{-\frac{1}{2}\langle (I+2TST)\Sigma_{S}^{-1}(a-b),a-b\rangle_{\calX}},
\end{align*}
 and $\Sigma_{X}  = (I+TST)(I+TRT) + TXT(2I+T(S+R)T)$ for $X\in\{S,R\}$.
\end{theorem}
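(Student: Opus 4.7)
The plan is to view both $\xi_1$ and $\xi_2$ as linear combinations of Gaussian integrals of products of one or two SE-$T$ kernel evaluations, and then reduce each such integral to a closed form by iteratively applying the mean-embedding formula from Theorem \ref{thm:embedding_of_GP} together with the Gaussian quadratic identity used in the proof of Theorem \ref{thm:MMD_two_GPs} (Proposition 1.2.8 of \citet{DaPrato2002}). The commutativity of $T$ and $S$ plays the same simplifying role as in Corollary \ref{cor:MMD_two_GPs_commute}: every operator that arises along the way lies in the commutative subalgebra generated by $T$ and $S$, so nested inverses compose additively, products of determinants collapse to a single determinant, and the Sherman--Morrison--Woodbury identity takes its cleanest form.

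For $\xi_1$, writing $z = (x,y)$ and $z' = (x',y')$, the inner expectation is
\begin{align*}
    g(z) \coloneqq \bbE_{z'}[h(z,z')] = \Phi_{k_T}P(x) - \Phi_{k_T}Q(x) + \Phi_{k_T}Q(y) - \Phi_{k_T}P(y),
\end{align*}
and each of these four mean embeddings is a closed form determinant-exponential by Theorem \ref{thm:embedding_of_GP}. Then $\xi_1 = \bbE_z[g(z)^2] - \MMD_{k_T}(P,Q)^4$, where the squared mean is already known from Corollary \ref{cor:MMD_two_GPs_commute}. The squared first moment expands into sixteen terms, each a product of two Gaussian quadratic exponentials integrated against $N_S\otimes N_{m,S}$; by independence of $x$ and $y$ the integral factors into two one-variable Gaussian integrals, each evaluated by a single application of Proposition 1.2.8 of \citet{DaPrato2002}. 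Grouping like terms and using the additivity of operator sums in the commutative subalgebra yields the stated formula, with $\Sigma_S = (I+TST)(I+3TST)$ arising naturally as the determinant factor in the cross terms (it is the operator $(I+TST)$ dressed by one further Gaussian integration against $N_S$).

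For $\xi_2 = \bbE_{z,z'}[h(z,z')^2] - \MMD_{k_T}(P,Q)^4$ the same strategy applies one layer deeper. Expanding $h^2$ produces sixteen summands of the form $k_T(u,v)\,k_T(u',v')$ with $u,v,u',v' \in \{x,x',y,y'\}$; independence of $z$ and $z'$ allows many to factor into products of quantities already computed in Theorem \ref{thm:MMD_two_GPs}, while the genuinely new cross terms (such as $\bbE[k_T(x,y')k_T(x,y)]$, $\bbE[k_T(x,y')k_T(x',y)]$, and $\bbE[k_T(x,y')^2]$) are handled by first integrating out one variable via Theorem \ref{thm:embedding_of_GP} and then integrating the resulting Gaussian quadratic against the remaining Gaussian through Proposition 1.2.8 of \citet{DaPrato2002}. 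The main obstacle is not analytical but purely bookkeeping: roughly twenty Gaussian integrals must be evaluated and then collected, and one must repeatedly invoke Sherman--Morrison--Woodbury and the multiplicativity $\det((I+A)(I+B)) = \det(I+A)\det(I+B)$, valid in the commutative setting, to consolidate the result into the canonical form stated in the theorem.
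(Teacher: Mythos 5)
Your strategy coincides with the paper's: both reduce $\xi_{1}$ and $\xi_{2}$ to a small collection of Gaussian integrals of one or two kernel evaluations, evaluate each by iterating Theorem \ref{thm:embedding_of_GP} together with Proposition 1.2.8 of \citet{DaPrato2002}, and consolidate using commutativity, multiplicativity of $\det$, and Sherman--Morrison--Woodbury; your decomposition $\xi_{1}=\bbE_{z}[g(z)^{2}]-\MMD_{k_{T}}(P,Q)^{4}$ expands term-for-term into the \citet{Sutherland2019} form the paper starts from, and your treatment of the $k^{2}$ terms in $\xi_{2}$ is an equivalent bookkeeping of the paper's observation that $k_{I}^{2}=k_{\sqrt{2}I}$. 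Only cosmetic slips: you quote the mean-shift specialisation (the measure $N_{S}\otimes N_{m,S}$ and $\Sigma_{S}=(I+TST)(I+3TST)$) where the general statement requires $N_{a,S}\otimes N_{b,R}$ and the general $\Sigma_{X}$, and the term $\bbE[k_{T}(x,y')k_{T}(x',y)]$ factors by independence rather than needing the two-stage integration --- neither affects the validity of the argument.
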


\begin{proof}[\textbf{Proof of Theorem \ref{thm:variance_of_estimator}}]
	As in the proof of Theorem \ref{thm:MMD_two_GPs} it suffices to consider $T = I$ and $a = 0$. Set $k = k_{I}$ and $ \langle\cdot,\cdot\rangle = \langle\cdot,\cdot\rangle_{\calX}$ for ease of notation. The expression for $\xi_{1}$ is derived first. The simplifications in \citet{Sutherland2019} reveal
	\begin{align*}
		\xi_{1}  & =  \bbE_{x}[\bbE_{x'}[k(x,x')]^{2}] - \bbE_{x,x'}[k(x,x')]^{2} \\
		& +  \bbE_{y}[\bbE_{y'}[k(y,y')]^{2}] - \bbE_{y,y'}[k(y,y')]^{2} \\
		& + \bbE_{x}[\bbE_{y}[k(x,y)]^{2}] - \bbE_{x,y}[k(x,y)]^{2}\\
		& + \bbE_{y}[\bbE_{x}[k(y,x)]^{2}] - \bbE_{y,x}[k(y,x)]^{2}\\
		& - 2\bbE_{x}[\bbE_{x'}[k(x,x')]\bbE_{y}[k(x,y)]] + 2\bbE_{x,x'}[k(x,x')]\bbE_{x,y}[k(x,y)]\\
		& - 2\bbE_{y}[\bbE_{y'}[k(y,y')]\bbE_{x}[k(y,x)]] + 2\bbE_{y,y'}[k(y,y')]\bbE_{x,y}[k(x,y)].
	\end{align*}
	
	To calculate this only three of the terms need to be calculated then the rest are deduced by substituting in different values. For example $\bbE_{x,x'}[k(x,x')]$ can be deduced from the formula for $\bbE_{x,y}[k(x,y)]$ by setting $b = 0$ and $S = R$ since this would make $y$ acts as an independent copy of $x$ in the expectation. The three terms needed are 
	\begin{align}
		&\bbE_{x,y}[k(x,y)] \label{eq:calc_1}\\
		&\bbE_{x}[\bbE_{y}[k(x,y)]^{2}] \label{eq:calc_2}\\
		&\bbE_{x}[\bbE_{x'}[k(x,x')]\bbE_{y}[k(x,y)]].\label{eq:calc_3}
	\end{align}
	
	Expression \eqref{eq:calc_1} was derived in the proof of Theorem \ref{thm:MMD_two_GPs} as
	\begin{align*}
		\bbE_{x,y}[k(x,y)] = \det(I+S+R)^{-\frac{1}{2}}e^{-\frac{1}{2}\langle (I+S+R)^{-1}b,b\rangle}.
	\end{align*}
	
	Next a formula for \eqref{eq:calc_2} is derived. First note $\bbE_{y}[k(x,y)]$ is the content of Theorem \ref{thm:embedding_of_GP}. The rest follows by using \citep[Proposition 1.2.8]{DaPrato2002} and rearranging terms. 
	\begin{align*}
		\bbE_{x}[\bbE_{y}[k(x,y)]^{2}] &= \det(I+R)^{-1}\int_{\calX}e^{-\langle (I+R)^{-1}(x-b),x-b\rangle}dN_{S}(x)\\
		& = \det\big((I+R)(I+R+2S)\big)^{-\frac{1}{2}}e^{-\langle(I+R)^{-1}b,b\rangle}\\
		&\hspace{1cm}\times e^{\langle 2S(I+R)^{-1}(I+2S(I+R)^{-1})^{-1}(I+R)^{-1}b,b\rangle}\\
		& = \det\big((I+R)(I+R+2S)\big)^{-\frac{1}{2}}e^{-\langle (I+R+2S)^{-1}b,b\rangle}.
	\end{align*}
	
	Finally \eqref{eq:calc_3} is derived which involves the longest calculations. The terms in the first expectation are the content of Theorem \ref{thm:embedding_of_GP}. 
	\begin{align*}
		& \bbE_{x}[\bbE_{x'}[k(x,x')]\bbE_{y}[k(x,y)]]  \\
		& = \det\big((I+S)(I+R)\big)^{-\frac{1}{2}}\int_{\calX}e^{-\frac{1}{2}\langle (I+S)^{-1}x,x\rangle}e^{-\frac{1}{2}\langle(I+R)^{-1}(x-b),x-b\rangle}dN_{S}(x)\\
		& = \det\big((I+S)(I+R)\big)^{-\frac{1}{2}} e^{-\frac{1}{2}\langle (I+R)^{-1}b,b\rangle}\\
		&\hspace{2cm}\times \int_{\calX}e^{-\frac{1}{2}\langle ((I+S)^{-1}+(I+R)^{-1})x,x\rangle}e^{-\frac{1}{2}\langle (I+R)^{-1}b,x\rangle}dN_{S}(x)\\
		& = \det\big((I+S)(I+R)\big)^{-\frac{1}{2}}\det\big(I+S((I+S)^{-1}+(I+R)^{-1})\big)^{-\frac{1}{2}}\\
		&\hspace{2cm}\times e^{\frac{1}{2}\langle\big[\big(I+S((I+S)^{-1}+(I+R)^{-1})\big)^{-1}S(I+R)^{-1} - I\big]b,(I+R)^{-1}b\rangle}\\
		& = \det(\Sigma_{S}\big)^{-\frac{1}{2}} e^{-\frac{1}{2}\langle (I+2S)\Sigma_{S}^{-1}b,b\rangle},
	\end{align*}
	where $\Sigma_{S} = (I+S)(I+R) + S(2I+S+R)$. The last equality is obtained by rearranging the terms in the exponent and determinant. Substituting into the formula for $\xi_{1}$ the derivations for \eqref{eq:calc_1}, \eqref{eq:calc_2} and \eqref{eq:calc_3} completes the derivation for $\xi_{1}$. The simplification of $\xi_{2}$ in \citep{Sutherland2019} is
	\begin{align*}
		\xi_{2} & = \bbE_{x,x'}[k(x,x')^{2}] - \bbE_{x,x'}[k(x,x')]^{2} + \bbE_{y,y'}[k(y,y')^{2}] - \bbE_{y,y'}[k(y,y')]^{2} \\
		& + 2\bbE_{x,y}[k(x,y)^{2}] - 2\bbE_{x,y}[k(x,y)]^{2}\\
		& -4\bbE_{x}[\bbE_{x'}[k(x,x')]\bbE_{y}[k(x,y)]] + 4\bbE_{x,x'}[k(x,x')]\bbE_{x,y}[k(x,y)]\\
		& -4\bbE_{y}[\bbE_{y'}[k(y,y')]\bbE_{x}[k(y,x)]] + 4\bbE_{y,y'}[k(y,y')]\bbE_{y,x}[k(y,x)],
	\end{align*}
	only the terms involving $k^{2}$ need to be calculated. Note that $k_{I}^{2} = k_{\sqrt{2}I}$ meaning if $S,R,b$ are replaced by $2S,2R,\sqrt{2}b$ then the formula for \eqref{eq:calc_1} immediately gives a formula for the terms involving $k^{2}$. Combining these derived formulas gives the desired expression for $\xi_{2}$.
	\end{proof}

	Theorem \ref{thm:variance_of_estimator_mean_shift} is recovered by substituting $S = R, a = 0, b = m$ into Theorem \ref{thm:variance_of_estimator}.
	\end{proof}

\begin{proof}[\textbf{Proof of Theorem \ref{thm:weak_convergence}}]
Suppose $P_{n}\xrightarrow{w}P$ then by \citet[Lemma 10]{Simon-Gabriel2018}, which holds in our case since the key intermediate result \citet[Theorem 3.3]{Berg1984} only requires $\calX$ to be a Hausdorff space, we have $\MMD(P_{n},P)\rightarrow 0$. 

Suppose $\MMD(P_{n},P)\rightarrow 0$, by Prokhorov's theorem \citep[Section 5]{Billingsley1971} we know that $\{P_{n}\}_{n=1}^{\infty}$ is relatively compact. Since $k$ is characteristic we know that $\calH_{k}$ is a separating set in the sense of \citet[Chapter 4]{Ethier1986} and $\MMD(P_{n},P)\rightarrow 0$ implies that for every $F\in\calH_{k}$ that $\lim_{n\rightarrow\infty}\int FdP_{n} = \int FdP$ therefore \citet[Lemma 3.4.3]{Ethier1986} applies and we may conclude that $P_{n}\xrightarrow{w}P$.
\end{proof}

\subsection{Proof for Section \ref{sec:implementation}}

\begin{proof}[\textbf{Proof of Proposition \ref{prop:admissible_kernel}}]\label{proof:admissible_kernel}
Suppose $k_{0}$ is ISPD and $C_{k_{0}}$ isn't injective. Then there exists non-zero $x\in L^{2}(\calD)$ such that $C_{k_{0}}x = 0$ so $\int_{\calD}\int_{\calD}x(s)k_{0}(s,t)x(t)dsdt = \langle x,C_{k_{0}}x\rangle_{L^{2}(\calD)} = \langle x,0\rangle_{L^{2}(\calD)} = 0$ contradicting $k_{0}$ being ISPD. Combining \citet[Proposition 5]{Sriperumbudur2011} and \citet[Theorem 9]{Sriperumbudur2010} shows that if $\mu_{k_{0}}$ has full support then $k_{0}$ is ISPD. 
\end{proof}

%%%%%%%%%%%%%%%%%%%%%%%%%%%%%%%%%%%%%%%%%%%%%%%%%%%%%%%%%%%%%%%%%%%%%%%%%%%%%%%%%%%%%%

%%%%%%%%%%%%%%%%%%%%%%%%%%%%%%%%%%%%%%%%%%%%%%%%%%%%%%%%%%%%%%%%%%%%%%%%%%%%%%%%%%%%%%

\end{document}